
\documentclass[a4paper,oneside,11pt]{article}

\usepackage{amsmath,amsfonts,amscd,amssymb,amsthm}
\usepackage{longtable,geometry}
\usepackage[english]{babel}
\usepackage[utf8]{inputenc}
\usepackage[active]{srcltx}
\usepackage[T1]{fontenc}
\usepackage{graphicx}
\usepackage{pstricks}
\usepackage{bbm}
\usepackage{enumitem}
\usepackage{color}
\usepackage{MnSymbol}
\usepackage{stmaryrd}
\usepackage{nicefrac}
\usepackage{calrsfs}
\geometry{dvips,a4paper,margin=1.0in}
\usepackage{mathtools} 
\usepackage{mdwlist}   
\usepackage{epstopdf} 

\newtheorem{theorem}{Theorem}[section]

\newtheorem{claim}{Claim}
\newtheorem*{claim*}{Claim}

\newtheorem{corollary}[theorem]{Corollary}
\newtheorem{lemma}[theorem]{Lemma}
\newtheorem{proposition}[theorem]{Proposition}

\newtheorem{remark}[theorem]{Remark}

\newcommand{\be}[1]{\begin{equation}\label{#1}}
\newcommand{\ee}{\end{equation}}
\numberwithin{equation}{section}

\newcommand{\ba}[1]{\begin{align}\label{#1}}
\newcommand{\ea}{\end{align}}
\numberwithin{equation}{section}

\newcommand{\ben}{\begin{equation*}}
\newcommand{\een}{\end{equation*}}
\numberwithin{equation}{section}

\renewenvironment{proof}[1][\relax]
{\paragraph{Proof\ifx#1\relax\else~of #1\fi}}%
{~\hfill$\square$\par\bigskip}


\newcommand{\calA}{\mathcal{A}}
\newcommand{\calB}{\mathcal{B}}
\newcommand{\calC}{\mathcal{C}}
\newcommand{\calD}{\mathcal{D}}
\newcommand{\calE}{\mathcal{E}}
\newcommand{\calF}{\mathcal{F}}
\newcommand{\calG}{\mathcal{G}}
\newcommand{\calH}{\mathcal{H}}
\newcommand{\calI}{\mathcal{I}}

\newcommand{\calO}{\mathcal{O}}

\newcommand{\calS}{\mathcal{S}}

\newcommand{\calW}{\mathcal{W}}
\newcommand{\calX}{\mathcal{X}}
\newcommand{\calY}{\mathcal{Y}}



\newcommand{\bbN}{\mathbb{N}}

\newcommand{\bbR}{\mathbb{R}}

\newcommand{\bbZ}{\mathbb{Z}}


\newcommand{\ga}{\gamma}
\newcommand{\de}{\delta}
\newcommand{\De}{\Delta}
\newcommand{\ep}{\varepsilon}
\newcommand{\eps}{\ep}
\newcommand{\om}{\omega}

\newcommand{\si}{\sigma}


\renewcommand{\Im}{\textrm{Im}}


\newcommand{\rk}[1]{\bgroup\color{red}%
\par\medskip\hrule\smallskip%
\noindent\textbf{#1}%
\par\smallskip\hrule\medskip\egroup}

\title{The phase transitions of the random-cluster and Potts models on slabs with $q \geq 1$ are sharp}
\author{Ioan Manolescu and Aran Raoufi}
\date{\today}

\usepackage{wasysym}

\newcommand{\Lat}{\calG}
\newcommand{\slab}{\calS}
\newcommand{\bfS}{\mathsf{S}}
\newcommand{\bfR}{\mathsf{R}}
\newcommand{\Ball}{\Lambda}
\newcommand{\pd}{\partial}


\newcommand\lra{\leftrightarrow}
\newcommand\xlra{\xleftrightarrow}

\usepackage{psfrag}
\def\mik{1}
\newcommand\cpsfrag[2]{\ifnum\mik=1\psfrag{#1}{#2}\fi}


\newcommand{\ovr}{\overline}
\newcommand{\La}{\Lambda}

\begin{document}

\maketitle

\begin{abstract}
We prove sharpness of the phase transition for the random-cluster model with $q \geq 1$ 
on graphs of the form $\slab := \Lat \times S$, 
where $\Lat$ is a planar lattice with mild symmetry assumptions, and $S$ a finite graph.
That is, for any such graph and any $q \geq 1$, there exists some parameter $p_c = p_c(\slab, q)$,
below which the model exhibits exponential decay 
and above which there exists a.s. an infinite cluster. The result is also valid for the random-cluster model on planar graphs with long range, compactly supported interaction.
It extends to the Potts model via the Edwards-Sokal coupling.
\end{abstract}

\section{Introduction}

In the last few years, a variety of results concerning the phase transition of the random-cluster model (or FK-percolation)
on planar lattices have emerged; see~\cite{BefDum12,duminil2015continuity, DumMan13, DumMan14}.
The first are specific to the self-dual setting of the square lattice, 
while the third, still in preparation, extends the results of the first two to isoradial graphs. 
Finally, the fourth paper -- a companion to the present paper -- 
proves the sharpness of the phase transition of random-cluster models 
on generic planar graphs with sufficient symmetry. 

These recent advances offer an understanding of planar random-cluster models 
that approaches that of Bernoulli percolation. 
However, contrary to the case of Bernoulli percolation, 
for which exponential decay in the subcritical phase was proved for lattices of any dimension  
(see~\cite{AizBar87,Men86} and~\cite{DumTas15} for a recent short proof),
the phase transition of the random-cluster model in dimensions above two is still not known to be sharp. 
We take a first step in this direction by proving the result for slabs, 
that is finite planar "slices" of a d-dimensional lattice.

Percolation on slabs has already been considered in the literature, most notably in the paper \cite{GriMar90}, 
where it was shown that the critical point of percolation on $\bbZ^2 \times \{0,1,2,\dots,N\}^{(d-2)}$ 
tends decreasingly to that of $\bbZ^d$ as $N \to \infty$. 
There is work in progress on the same type of result for the random-cluster model with integer $q$ \cite{DumTasGMRC}.
Let us also mention that Bernoulli percolation on slabs has recently been shown to exhibit a continuous phase transition~\cite{duminil2014absence};
a result which is also long sought for lattices of general dimension. 
Arguments similar to those of \cite{DumTasGMRC} also appear in \cite{NTW15} and \cite{BasSap16}.

The present paper blends the method of~\cite{DumMan14} 
with the techniques of~\cite{duminil2014absence}. 
It is intended as a complement to~\cite{DumMan14}, 
focusing essentially on the new elements needed to treat the case of slabs.

Next we briefly introduce the model. 
For more details on the random-cluster model, we refer the reader to the monograph~\cite{Gri06}.

Consider a finite graph $G = (V_G,E_G)$. 
The random-cluster measure with edge-weight $p\in [0,1]$ and cluster-weight $q > 0$ on $G$ 
is a measure $\phi_{p,q,G}$ on configurations $\omega\in\{0,1\}^{E_G}$. 
For such a configuration $\om$, 
an edge $e$ is said to be {\em open} (in $\omega$) if $\omega(e)=1$, otherwise it is {\em closed}. 
The configuration  $\omega$ can be seen as a subgraph of $G$ with vertex set $V_G$ and edge-set $\{e\in E_G:\omega(e)=1\}$. 
A {\em cluster} is a connected component of the subgraph $\omega$. 
Let $o(\omega)$, $c(\omega)$ and $k(\omega)$ denote the number of open edges, closed edges and clusters  in $\omega$, respectively. 
The probability of a configuration is then equal to
\begin{equation*}
\phi_{p,q,G}(\omega)=\frac{p^{o(\omega)}(1-p)^{c(\omega)}q^{k(\omega)}}{Z(p,q,G)},
\end{equation*}
where $Z(p,q,G)$ is a normalising constant called the partition function. 

Fix for the rest of the paper a connected planar locally-finite graph $\Lat = (V_{\Lat}, E_{\Lat})$, 
which is invariant under the action of some lattice $\Lambda\simeq\bbZ\oplus\bbZ$, 
under reflection with respect to the line $\{(0,y), y \in \bbR\}$
and rotation by some angle $\theta \in (0,\pi)$ around $0$.
For simplicity we will assume in the present paper that $\theta = \pi/2$
and that $\Lat$ is invariant under translations by the vectors $(1,0)$ and $(0,1)$.

In addition let $S = (V_S, E_S)$ be a finite graph and define the "slab" $\slab = \Lat \times S$.
That is $\slab$ is the graph with vertices $V_{\slab} = V_{\Lat} \times V_S$ and edges $E_{\slab}$
connecting two vertices $(u,v) \in V_{\slab}$ and $(u',v')\in V_{\slab}$ if 
either $u = u'$ and $(v,v') \in E_S$ or $(u,u')\in E_{\Lat}$ and $v = v'$.  
Maybe the most common such example is for $\Lat = \bbZ^2$ and $S = \{1,\dots,n\}$, 
in which case $\slab$ is a slice of thickness $n$ of the three dimensional lattice $\bbZ^3$. 

For $p \in [0,1]$ and $q \geq 1$, random-cluster measures with parameters $p, q$ may be defined on the infinite graph $\slab$ 
by taking weak limits of measures on sequences of nested finite graphs $G_n$ tending to $\slab$
(see~\cite[Ch. 4]{Gri06} or~\cite[Sec 4.5]{Dum13} for a detailed account).
We call such limits \emph{infinite-volume} measures.
For a pair of parameters $p,q$, more than one such infinite-volume measure may exist;
the two most notable infinite-volume measures are the free and wired ones, 
denoted by $\phi_{p,q,\slab}^0$ and $\phi_{p,q,\slab}^1$, respectively.
These are ordered in that, for $p < p'$ and $q\geq 1$, 
$$ \phi_{p,q,\slab}^0 \leq_{\rm st} \phi_{p,q,\slab}^1 \leq_{\rm st} \phi_{p',q,\slab}^0,$$
where $\leq_{\rm st}$ denotes stochastic domination. 
Moreover, $\phi_{p,q,\slab}^0$ and  $\phi_{p,q,\slab}^1$ are the extremal measures with parameters $p$ and $q$, 
in the sense that, if $\phi_{p,q,\slab}$ is an infinite volume measure with these parameters, then 
$$ \phi_{p,q,\slab}^0 \leq_{\rm st} \phi_{p,q,\slab} \leq_{\rm st} \phi_{p,q,\slab}^1.$$
While it is possible to have values of $p$ for which the infinite volume measure is not unique, i.e. for which 
$\phi_{p,q,\slab}^0 \neq \phi_{p,q,\slab}^1$, only at most countably many such values of $p$ exist for any fixed $q \geq 1$. 
For $p,q$ such that $\phi_{p,q,\slab}^0 = \phi_{p,q,\slab}^1$, we will denote the unique infinite-volume measure by~$\phi_{p,q,\slab}$.

\begin{theorem}\label{thm:main}
	Fix $q\ge1$. There exists $p_c=p_c(\slab) \in [0,1]$ such that
	\begin{itemize}[nolistsep,noitemsep]
	\item for $p < p_c$, there exists $c=c(p,\slab)>0$ such that for any $x,y \in \slab$, 
	\begin{align}
	\phi_{p,q,\slab}^1[x\text{ and }y\text{ are connected by a path of open edges}]\le \exp(-c|x-y|), \label{eq:exp_decay0}
	\end{align}
	\item for $p > p_c$, there exists a.s. an infinite open cluster under $\phi_{p,q, \slab}^0$.
	\end{itemize}
\end{theorem}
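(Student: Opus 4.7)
The plan is to adapt the strategy of \cite{DumMan14} for planar random-cluster models, blending it with the slab techniques of \cite{duminil2014absence}. Write $\La_n = [-n,n]^2 \cap V_{\Lat}$ and lift it to $B_n := \La_n \times V_S \subset V_{\slab}$, and let $\calH_n$ be the event that $B_n$ is crossed horizontally by an open path (horizontality referring to the planar projection). Define
\[ p_c(\slab) := \sup\Bigl\{p \in [0,1] : \liminf_{n\to\infty} \phi_{p,q,\slab}^0[\calH_n] = 0\Bigr\}. \]

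For $p < p_c$ I would establish a finite-size criterion in the spirit of \cite{DumMan14}: if at some scale $n_0$ the crossing probability $\phi_{p,q,\slab}^0[\calH_{n_0}]$ falls below an explicit threshold, then it decays exponentially in $n$. The induction on the scale is performed after projecting paths onto $\Lat$, treating the thickness $|V_S|$ as a fixed constant that only affects the multiplicative factors. A standard finite-energy argument together with the FKG inequality then converts the crossing estimate into the two-point bound \eqref{eq:exp_decay0}.

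For $p > p_c$, by definition $\liminf_n \phi_{p,q,\slab}^0[\calH_n] > 0$. The symmetries assumed on $\Lat$ (reflection and rotation by $\pi/2$) are inherited by $\slab$, so a Russo--Seymour--Welsh argument---carried out in the planar projection but lifted by finite-energy to the full slab---upgrades this to uniform lower bounds on the probabilities of crossings of all rectangles of bounded aspect ratio. A classical block-renormalization construction, combined with a Borel--Cantelli argument, then glues such crossings into an a.s.\ infinite open cluster under $\phi_{p,q,\slab}^0$.

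The main obstacle is adapting the finite-size criterion to the slab. The planar proof in \cite{DumMan14} relies decisively on duality and on interface exploration, both of which are unavailable on $\slab$ because of the extra ``vertical'' direction. Following \cite{duminil2014absence,DumTasGMRC}, the fix is to compare crossing events in $\slab$ with their projections onto $\Lat$ using finite-energy estimates whose loss depends only on $|V_S|$. The delicate technical point is to carry out the scale induction while keeping the constants uniform in $n$ and compatible with the comparison between free and wired boundary conditions (which coincide for all but countably many $p$).
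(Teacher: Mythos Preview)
Your outline has the right shape but a genuine gap at the technical core. On a slab, two open paths whose planar projections intersect need not meet in $\slab$; they may merely \emph{overlap} above a common point of $\Lat$. Your proposed fix---``compare crossing events in $\slab$ with their projections onto $\Lat$ using finite-energy estimates whose loss depends only on $|V_S|$''---does not work as stated: the projection of a random-cluster configuration on $\slab$ is not a random-cluster model on $\Lat$, so one cannot run RSW ``in the projection'' and then lift. A naive finite-energy correction would cost a factor at every overlap along the path, i.e.\ exponential in $n$, not just a constant depending on $|V_S|$. What is actually needed is a \emph{gluing lemma} (Lemma~\ref{lem:gluing}): whenever the planar topology forces two paths to overlap, a local surgery in a single box $\Lambda_2(z)$ around one overlap point connects them, and a careful ``minimal path'' bookkeeping shows the surgery map has bounded-to-one preimages. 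This yields $\phi_p(\calX)\ge c\,\phi_p(\calA)\phi_p(\calB)$ with $c=c(p,\Lat,S)$ independent of the scale, which is the replacement for the planar intersection property throughout the RSW step.

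The second gap is in your supercritical part. A ``classical block-renormalization plus Borel--Cantelli'' argument requires combining an \emph{unbounded} number of crossings into one cluster; the qualitative gluing bound above degrades as $c^k$ after $k$ gluings and is useless for this. The paper therefore proves a quantitative version (Lemma~\ref{lem:gluing}\ref{lem:gluing2}): if $\phi_p(\calA)$ is close to $1$, then the glued event has probability at least $\phi_p(\calA)\phi_p(\calB)-(1-\phi_p(\calA))^\beta$. Its proof is substantially harder (random ordering of edges, counting almost-overlap points, a multi-valued map argument), and it is precisely what makes the iteration in Proposition~\ref{prop:convergesfast} and the final infinite-cluster construction converge.

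Finally, your subcritical strategy---a direct finite-size criterion implying exponential decay---is not the route taken and is not obviously available for random-cluster on slabs (no duality, no independence). The paper instead defines the two thresholds $\tilde p_c$ (exponential decay) and $p_c$ (percolation) and argues by contradiction: if $\tilde p_c<p<p_c$ then hard crossings are bounded away from $0$ (Corollary~\ref{cor:hardcrossbound}), hence tend to $1$ polynomially fast (Lemma~\ref{cor:gg_applied} and Proposition~\ref{prop:convergesfast}), and the quantitative gluing then builds an infinite cluster, contradicting $p<p_c$. Exponential decay below $p_c$ is thus obtained indirectly, as a consequence of $p_c=\tilde p_c$, not via a finite-size criterion.
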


The equivalent of Theorem~\ref{thm:main} is also valid for planar random-cluster models with finite range interactions; 
we define these next. 
Let $J: V_\Lat \times V_\Lat \rightarrow [0,+\infty)$ be a function 
with the property that there exists a constant $M \geq 1$ such that, if $\rm{d}_{\Lat} (x,y) >M$, then $J(x,y)=0$
(where $\rm{d}_{\Lat}$ is graph distance on $\Lat$). 
Moreover, suppose that $J$ has the same symmetries as $\Lat$.
Infinite-volume random-cluster measures $\phi_{\beta, q, \Lat, J}$ with parameters $\beta > 0$ and $q \geq 1$ 
may be defined as before as weak limits of measures $\phi_{\beta, q, G_n, J}$
on sequences of finite subgraphs $G_n$  tending toward $\Lat$, where $\phi_{\beta, q, G_n, J}$ is defined as
$$
	\phi_{\beta, q, G_n, J} (\om) = 
	\frac{\left( \prod_{ x,y \in V_{G_n}} (e^{\beta J(x,y)}-1) ^{\om(e)} \right) q^{k(\om)}}{Z(\beta, q, G_n, J)},
	$$ 
$Z(\beta, q, G_n, J)$ being a normalising constant. 
The same remarks about the different infinite-volume measures as in the case of slabs apply here. 


\begin{theorem}\label{thm:long-range}
	Fix $q\ge1$. There exists $\beta_c=\beta_c(\Lat, J) \in [0,1]$ such that
\begin{itemize}[nolistsep,noitemsep]
\item for $\beta < \beta_c$, there exists $c=c(p,\Lat, J)>0$ such that for any $x,y \in \Lat$,
\begin{align*}
\phi_{p,q,\Lat,J}^1[x\text{ and }y\text{ are connected by a path of open edges}]\le \exp(-c|x-y|), 
\end{align*}
\item for $\beta > \beta_c$, there exists a.s. an infinite open cluster under $\phi_{p,q, \Lat, J}^0$.
\end{itemize}
\end{theorem}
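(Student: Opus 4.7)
The plan is to mirror the proof of Theorem~\ref{thm:main}, with the interaction range $M$ playing the role of the slab thickness. As a preliminary reformulation, the long-range random-cluster model on $\Lat$ with interaction $J$ can be viewed as a nearest-neighbour random-cluster model on the auxiliary graph $\Latk$ whose vertex set is $V_\Lat$ and whose edges are all pairs $\{x,y\}$ with $J(x,y)>0$, equipped with edge weights $p_{xy} = 1 - e^{-\beta J(x,y)}$. The graph $\Latk$ is not planar in general, but its edges are all short (every edge has endpoints within graph distance $M$ in $\Lat$), it inherits the full symmetry group of $\Lat$, and it projects onto $\Lat$ with bounded fibres. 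In this sense $\Latk$ is a ``planar graph of finite thickness'' of a flavour different from a slab, but with the same relevant features.

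I would then revisit the proof of Theorem~\ref{thm:main} and verify that each step admits a long-range analogue. That proof rests on three essential ingredients: (i) planar duality and RSW-type crossing estimates for the base lattice $\Lat$; (ii) the lattice symmetries of $\Lat$; and (iii) the bounded vertical depth of $\slab$, used to reduce ``three-dimensional'' correlation arguments to ``two-dimensional'' ones via vertical sweeps and couplings between layers. Items (i) and (ii) depend only on $\Lat$ and are therefore available verbatim. For (iii), the role of the slab thickness is played by $M$: every edge of $\Latk$ can be replaced by a path in $\Lat$ of length at most $M$, so any connection event in $\Latk$ is supported within the $M$-neighbourhood of a corresponding event in $\Lat$. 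Any gluing or coupling argument on $\slab$ that exploits the finite number of vertical layers translates into an analogous argument over the bounded-radius $\Lat$-neighbourhoods corresponding to $\Latk$-edges, and the differential-inequality / OSSS step underlying the dichotomy at $\beta_c$ goes through unchanged.

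The main obstacle will be the local surgery arguments---finite-energy modifications and cluster gluings---which in the slab setting only affect a bounded number of nearest-neighbour edges per operation. In $\Latk$, toggling a single edge can redraw connectivity over a distance up to $M$, so the bookkeeping is more delicate and the probabilistic cost must be tracked uniformly over all bond types. Nevertheless, since $M$ is a fixed finite constant and the number of bond types is finite by the $\Lambda$-periodicity of $J$, the extra cost only affects multiplicative constants and not the exponential rates. Once exponential decay of connection probabilities below some $\beta_c$ and almost-sure existence of an infinite cluster above $\beta_c$ are established, Theorem~\ref{thm:long-range} follows from the standard monotonicity dichotomy that defines $\beta_c$, exactly as in the slab case.
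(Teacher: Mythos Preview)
Your overall plan---treat the finite-range model as a non-planar graph $\Latk$ sitting over $\Lat$ with ``thickness'' governed by the interaction radius $M$, and rerun the argument of Theorem~\ref{thm:main}---is exactly what the paper does (or rather, declares: the paper gives no separate proof and simply says the adaptation is direct). So at the level of strategy you are aligned.

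However, your summary of what the proof of Theorem~\ref{thm:main} actually relies on is inaccurate in ways that would derail a genuine adaptation. First, there is no planar duality anywhere in the argument; the paper is explicit that the dual of $\phi_p$ on a slab is \emph{not} a random-cluster measure, and the whole point of Section~\ref{sec:mainproof} is to bypass duality via quantitative bounds on how fast $\phi_p(\calC_h(2n,n))\to 1$. Second, there is no OSSS step: the differential inequalities used are the Grimmett--Piza Hamming-distance inequality~\eqref{eq:hamming_inf}--\eqref{eq:hamming_dec_inf} and the Graham--Grimmett influence estimate behind Lemma~\ref{cor:gg_applied}. Third, the slab thickness is not handled by ``vertical sweeps and couplings between layers''; the central device is the Gluing Lemma (Lemma~\ref{lem:gluing}), which takes the planar topological fact~\eqref{eq:topo_cond} that two crossings of $D\subset\Lat$ must intersect, upgrades ``intersect'' to ``overlap in $\overline D$'', and then uses finite energy plus a careful surgery-and-counting argument to turn overlap into connection.

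For the long-range version, the honest adaptation is therefore: replace the overlap notion ``$\gamma$ and $\chi$ both meet $\overline{\{g\}}$'' by ``$\gamma$ and $\chi$ come within $\Lat$-distance $O(M)$ at some point'', which is what~\eqref{eq:topo_cond} forces once edges have length $\le M$; redo the surgery of Lemma~\ref{lem:gluing} inside a ball of radius $O(M)$ (the role of $\Lambda_2(z)$); and check that the multi-valued-map bookkeeping in Lemmas~\ref{lem:Ysmall}--\ref{lem:Ylarge} still gives constants depending only on $p$, $\Lat$, $M$. Your last paragraph gestures at this, but you should name the Gluing Lemma as the object to adapt rather than invoking duality or OSSS, neither of which is present.
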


The proof of Theorem \ref{thm:long-range} is a direct adaptation of that of Theorem \ref{thm:main}.
In what follows we will only prove Theorem \ref{thm:main}; 
we leave the details of the adaptation of the proof to the second theorem to the interested reader. 

\paragraph{Results for the Potts model.} 
The above results have direct consequences for Potts model. 
Consider an integer $q\ge2$ and introduce the polyhedron $\Omega_q\subset\bbR^{q-1}$ 
with $q$ elements defined by the property that for any $a,b\in \Omega_q$
$$a\cdot b=\begin{cases}\ \ 1&\text{ if $a=b$,}\\
\ -\frac1{q-1}&\text{ otherwise,}\end{cases}$$
where $\cdot$ denotes the scalar product on $\bbR^{q-1}$.

Let $G = (V_G,E_G)$ be a finite graph and $\beta>0$.  
The {\em $q$-state Potts model on $G$ at inverse-temperature $\beta>0$ with free boundary conditions} 
is defined as follows. 
The {\em energy} of a configuration
$\sigma=(\sigma_x:x\in V_G)\in \Omega_q^{V_G}$ is given by the Hamiltonian
\begin{equation}\label{eq:def}
	H_G(\sigma)~:=~-\sum_{\{x, y\} \in E_G } \sigma_x\cdot \sigma_y.
\end{equation} 
The probability $\mu_{\beta, q, G}$ of a configuration $\sigma$ is defined by
\begin{align}\label{eq:Potts_def}
	\mu_{\beta, q, G}(\sigma)~:=~\frac{\exp[-\beta H_G(\sigma)]}{Z(G,\beta,q)},
\end{align}
where $Z(G,\beta,q)$ is defined in such a way that
the sum of the weights over all possible configurations equals 1. 

As for the random-cluster model, 
the q-state Potts measure with free boundary conditions $\mu_{\beta, q, \slab}$ on the infinite graph $\slab$ may be defined
by taking the weak limit of measures $\mu_{\beta, q, G_n}$ on sequences of nested finite graphs $G_n$ converging to $\slab$. 

The Edward-Sokal coupling between the measures $\phi_{p, q, \slab}^0$ and $\mu_{\beta, q, \slab}$ 
where $p= 1- \exp(-\frac{q}{q-1}\beta)$ yields the following relation for any two vertices $x, y \in \slab$
\begin{equation*} \label{eq:coupling}
\phi_{p, q, \slab}^0 [x\text{ and }y\text{ are connected by a path of open edges}]  = \mu_{\beta, q, \slab} (\sigma_x \cdot \sigma_y).
\end{equation*}
The above equation together with Theorem \ref{thm:main} imply the following corollary.
\begin{corollary}\label{thm:mainpotts}
	Fix $q\ge2$. There exists $\beta_c=\beta_c(\slab) \in [0, \infty) $ such that
\begin{itemize}[nolistsep,noitemsep]
\item for $\beta < \beta_c$, there exists $c=c(\beta,\slab)>0$ such that for any $x,y \in \slab$,
\begin{align*}
\mu_{\beta, q, \slab}  ( \sigma_x \cdot \sigma_y) \le \exp(-c|x-y|), 
\end{align*}
\item for $\beta> \beta_c$, there exists $c'=c'(\beta,\slab)>0$ such that for any $x,y \in \slab$,
\begin{align*}
\mu_{\beta, q, \slab}  ( \sigma_x \cdot \sigma_y) \geq c'.
\end{align*}    
\end{itemize}
\end{corollary}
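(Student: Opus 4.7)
The plan is to derive Corollary~\ref{thm:mainpotts} directly from Theorem~\ref{thm:main} through the Edwards--Sokal coupling displayed just above the statement. Set
\[
\beta_c \;:=\; -\tfrac{q-1}{q}\log\bigl(1 - p_c(\slab,q)\bigr),
\]
so that the coupling parameters $p = 1 - e^{-\beta q/(q-1)}$ and $p_c(\slab,q)$ are related by $\beta \lessgtr \beta_c \iff p \lessgtr p_c$. For $\beta_c$ to be finite one needs $p_c(\slab,q) < 1$; this is standard, obtained by stochastically dominating $\phi^1_{p,q,\slab}$ from below by Bernoulli percolation of parameter $p/(p+q(1-p))$ and noting that this Bernoulli model percolates on the planar sublattice $\Lat \times \{v_0\} \subset \slab$ (for any fixed $v_0 \in V_S$) as soon as $p$ is close enough to $1$.

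For $\beta < \beta_c$, the Edwards--Sokal identity together with the stochastic ordering $\phi^0_{p,q,\slab} \leq_{\rm st} \phi^1_{p,q,\slab}$ yields
\[
\mu_{\beta,q,\slab}(\sigma_x \cdot \sigma_y) \;=\; \phi^0_{p,q,\slab}[x \lra y] \;\leq\; \phi^1_{p,q,\slab}[x \lra y],
\]
and the right-hand side decays exponentially in $|x-y|$ by the first clause of Theorem~\ref{thm:main}. This is essentially bookkeeping.

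For $\beta > \beta_c$, Theorem~\ref{thm:main} guarantees that $\phi^0_{p,q,\slab}$ admits an infinite cluster almost surely, and the real task is to upgrade this into a \emph{uniform} lower bound on the two-point function. I would combine three ingredients: (i) translation invariance of $\phi^0_{p,q,\slab}$ under $\Lambda$, together with finite energy, which give $\theta_{*} := \min_{v \in V_S} \phi^0_{p,q,\slab}[(0,v) \lra \infty] > 0$ (the minimum is over the finite set $V_S$, and finite energy propagates one positive percolation probability to every vertex); (ii) the Burton--Keane argument, valid for random-cluster measures with $q \geq 1$ on the amenable quasi-transitive graph $\slab$ (see \cite[Ch.~4]{Gri06}), which gives almost-sure uniqueness of the infinite cluster, so that on $\{x \lra \infty\} \cap \{y \lra \infty\}$ the two vertices lie in the \emph{same} cluster; (iii) the FKG inequality, since both $\{x \lra \infty\}$ and $\{y \lra \infty\}$ are increasing. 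Putting them together,
\[
\phi^0_{p,q,\slab}[x \lra y] \;\geq\; \phi^0_{p,q,\slab}[x \lra \infty,\, y \lra \infty] \;\geq\; \phi^0_{p,q,\slab}[x \lra \infty]\,\phi^0_{p,q,\slab}[y \lra \infty] \;\geq\; \theta_{*}^{2},
\]
so $c' := \theta_{*}^{2}$ is the required constant.

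The only substantive step is the supercritical lower bound; the rest is routine once the coupling parameters are matched. A convenient feature is that Edwards--Sokal pairs the free Potts measure with the free random-cluster measure $\phi^0_{p,q,\slab}$, which is precisely the measure in which Theorem~\ref{thm:main} asserts an infinite cluster, so no change of boundary condition is required. The one point to articulate carefully is the citation of Burton--Keane in the random-cluster setting with $q \geq 1$ on $\slab$, since the argument uses insertion/deletion tolerance together with translation invariance along $\Lambda$.
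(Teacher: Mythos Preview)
Your proposal is correct and follows exactly the route the paper indicates: it derives the corollary from Theorem~\ref{thm:main} via the Edwards--Sokal identity $\mu_{\beta,q,\slab}(\sigma_x\cdot\sigma_y)=\phi^0_{p,q,\slab}[x\lra y]$, with the paper itself stating only that the derivation is ``straightforward'' and giving no further details. Your supercritical argument (finite energy to get $\theta_*>0$ on all orbit representatives, Burton--Keane for uniqueness on the amenable quasi-transitive slab, then FKG) is the standard way to turn a.s.\ percolation into a uniform two-point lower bound, and is precisely the content one would expect the reader to supply; the one cosmetic point is that the minimum defining $\theta_*$ should in principle run over a full fundamental domain of $\slab/\Lambda$ rather than just $V_S$, but your finite-energy remark already covers this.
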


Likewise, Theorem \ref{thm:long-range} may be translated for the Potts model. 
If $J$ is a function as before, define the Hamiltonian of the weighted Potts model on a finite sub-graph $G$ of $\Lat$ by
$$
	H_{G,J}(\sigma)~:=~-\sum_{x, y \in V_G } J(x,y) ~ \sigma_x\cdot \sigma_y, 
$$
and the associated measure $\mu_{\beta, q, G, J}$ by \eqref{eq:Potts_def}.
Infinite volume measures $\mu_{\beta, q, \Lat, J}$ may also be defined as above.

\begin{corollary}\label{thm:pottsshortrange}
	Fix $q\ge2$. There exists $\beta_c=\beta_c(\Lat, J) \in [0, \infty) $ such that
\begin{itemize}[nolistsep,noitemsep]
\item for $\beta < \beta_c$, there exists $c=c(\beta, \Lat, J)>0$ such that for any $x,y \in \Lat$,
\begin{align*}
\mu_{\beta, q, \Lat, J} ( \sigma_x \cdot \sigma_y ) \le \exp(-c|x-y|), 
\end{align*}
\item for $\beta> \beta_c$, there exists $c'=c'(\beta,\Lat, J)>0$ such that for any $x,y \in \Lat$,
\begin{align*}
\mu_{\beta, q, \Lat, J} ( \sigma_x \cdot \sigma_y) \geq c'.
\end{align*}    
\end{itemize}
\end{corollary}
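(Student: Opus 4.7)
The plan is to follow the same scheme used to deduce Corollary \ref{thm:mainpotts} from Theorem \ref{thm:main}, now invoking Theorem \ref{thm:long-range} in place of Theorem \ref{thm:main}. The central tool is the Edwards-Sokal coupling between the Potts model with coupling constants $\beta J(x,y)$ and the random-cluster model with edge-weights $p(x,y) = 1 - e^{-\frac{q}{q-1}\beta J(x,y)}$; these are precisely the weights $e^{\beta J(x,y)}-1$ appearing in the definition of $\phi_{\beta,q,G_n,J}$. Passing to the infinite-volume limit along a suitable exhausting sequence $G_n$ then yields the identity
$$ \mu_{\beta, q, \Lat, J}(\sigma_x \cdot \sigma_y) \;=\; \phi^0_{\beta, q, \Lat, J}[x \lra y] $$
for every $x,y \in V_\Lat$, and the whole corollary reduces to bounding this connection probability above and below. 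Take $\beta_c(\Lat, J)$ to be the critical value given by Theorem \ref{thm:long-range}.

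First I would treat the subcritical regime $\beta < \beta_c$: applying the first bullet of Theorem \ref{thm:long-range} to the right-hand side of the coupling identity immediately gives exponential decay of two-point spin correlations with the same rate $c$.

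The supercritical case $\beta > \beta_c$ is the step requiring a little extra argument. Theorem \ref{thm:long-range} only asserts the almost-sure existence of an infinite open cluster under $\phi^0_{\beta, q, \Lat, J}$, and this must be upgraded to a uniform positive lower bound on $\phi^0[x \lra y]$. I would proceed in three substeps. Step (i): use translation invariance of $\phi^0$ and ergodicity under the $\bbZ\oplus\bbZ$-action on $\Lat$ to deduce that $\phi^0[x \lra \infty] \geq c_0 > 0$ uniformly in $x$, using the fact that $x$ belongs to one of finitely many orbits under the translation group. Step (ii): apply the FKG inequality to the two increasing events $\{x \lra \infty\}$ and $\{y \lra \infty\}$ to obtain
$$ \phi^0_{\beta,q,\Lat,J}[x \lra \infty,\; y \lra \infty] \;\geq\; c_0^2. $$
Step (iii): invoke uniqueness of the infinite cluster to conclude that on this event $x$ and $y$ lie in the same component almost surely, whence $\phi^0[x \lra y] \geq c_0^2 =: c'$, which is the desired bound.

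I expect the one mild obstacle to be step (iii): uniqueness of the infinite cluster requires a Burton--Keane style argument on the underlying graph whose edges are the pairs $\{x,y\}$ with $J(x,y)>0$. This graph inherits amenability, local finiteness and the translation symmetries from $\Lat$, while the random-cluster measure has the finite-energy property thanks to the compact support (diameter $\leq M$) of $J$; hence the classical argument transcribes with no essential modification. This is the only ingredient that is not a direct rereading of Theorem \ref{thm:long-range}, and as in the slab case it is enough to record it briefly before assembling the three substeps into the supercritical bound.
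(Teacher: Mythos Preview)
Your proposal is correct and follows exactly the route the paper indicates: the paper gives no detailed proof of this corollary, stating only that ``deriving the two corollaries from Theorems \ref{thm:main} and \ref{thm:long-range} through the Edward-Sokal coupling is straightforward,'' and your argument is a faithful and more explicit execution of that plan. Two small bookkeeping points worth tidying: the weights $e^{\beta J(x,y)}-1$ in the definition of $\phi_{\beta,q,G_n,J}$ correspond to $p(x,y)=1-e^{-\beta J(x,y)}$, so the Potts parameter $\beta$ and the random-cluster parameter differ by the monotone reparametrisation $\beta \mapsto \tfrac{q}{q-1}\beta$ (harmless for the statement), and in the subcritical step the coupling gives $\phi^0[x\lra y]$ while Theorem~\ref{thm:long-range} bounds $\phi^1[x\lra y]$, so you should insert the trivial inequality $\phi^0 \leq_{\rm st} \phi^1$.
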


We will not discuss further this adaptation to the Potts model.
For background on the Potts model and its coupling to the random-cluster model we direct the reader to \cite{Gri06}.
Deriving the two corollaries from Theorems \ref{thm:main} and \ref{thm:long-range} through the  Edward-Sokal coupling is straightforward.

\section{Notation and preparatory remarks}

\paragraph{Notation.}

In the rest of the paper $q \geq1$ will be fixed and we drop it from the notation. 
We will only work with infinite volume measures on $\slab$, hence we will equally drop $\slab$ from the notation for $\phi$.
\begin{center}
	\begin{minipage}{0.8\textwidth}
	\centering
		{\bf Thus $\phi_p$ will denote any infinite volume measure on $\slab$ with edge-weight $p$ and cluster-weight $q$. }
	\end{minipage}\\
\end{center}
It will be apparent in the proofs that we always allow ourselves to alter $p$ in a small open interval. 
We can therefore assume that all the values of $p$ mentioned hereafter are such that $\phi_p$ is the \emph{unique} infinite-volume measure. 

If $A$ is a subgraph of $\Lat$, then we define $\ovr{A}=A\times S$ and regard this as a subgraph of~$\slab$. 

Let $u,v \in \slab$ be two vertices, $D \subset\slab$ be a subgraph and $\om \in \{0,1\}^\slab$ be a configuration. 
We write $u \xlra{\om, D} v$ for the event that there exists an $\om$-open path, 
i.e. a self-avoiding chain of adjacent $\om$-open edges, linking $u$ and $v$ and contained in $D$. 
For sets $A, B$ of vertices of $\slab$, write $A \xlra{\om, D} B$ 
if there exists $u \in A$ and $v \in B$ such that $u \xlra{\om, D} v$ holds. 
When no confusion is possible, the configuration $\om$ will be omitted from the notation. 
If $D$ is omitted, it is assumed equal to~$\slab$. 

For $a<b$ and $c<d$, we identify $[a,b] \times [c,d]$ with the subgraph of $\Lat$ 
induced by the vertices contained in $[a,b] \times [c,d]$. 
We call a rectangle, a subgraph of $\slab$ of the form $R = \ovr{[a,b] \times [c,d]}$ 
(note that a rectangle is not planar, it has "thickness" $S$). 

For a rectangle $R = [a,b]\times [c,d]$, if we set $A=\ovr{\{a\}\times[c,d]}$ and $B=\ovr{\{b\}\times[c,d]}$ 
(respectively $A=\ovr{[a,b]\times\{c\}}$ and $B=\ovr{[a,b]\times\{d\}}$),
the event $A\xlra{\om, R} B$ is denoted by $\calC_h([a,b]\times[c,d])$ (respectively $\calC_v([a,b]\times[c,d])$)
and if it occurs we say that $R$ is crossed horizontally (respectively vertically). 
An open path from $A$ to $B$ is called a horizontal crossing (respectively vertical crossing).
When $a=0$ and $c=0$, we simply write $\calC_h(b,d)$ and $\calC_v(b,d)$ for the events above. 
When $b-a > d-c$, horizontal crossings are called crossings in the hard direction, 
while vertical ones are crossings in the easy direction. 
The terms are exchanged when $b-a < d-c$.

For $\ga = (\ga_1,\dots, \ga_m)$ and $\chi = (\chi_1,\dots, \chi_m)$ two paths of $\slab$, 
we say that $\ga$ and $\chi$ overlap at some point $g \in \Lat$ if 
there exist $i$ and $j$ such that $\ga_i, \chi_j \in \ovr{\{g\}}$.

For $g \in \Lat$, let $B_R(g)$ (and $\partial B_R(g)$) 
be the set of vertices at distance less than or equal to $R$ (equal to $R$, respectively) from $z$. 
For a point $z=(g,n) \in \slab$ define $\Lambda_R(z)= \ovr{B_R(g)}$ and $\partial \Lambda_R(z)= \ovr{\partial B_R(g)}$. 
We call $\Lambda_R(z)$ the box of size $R$ around $z$.

\paragraph{Strategy of the proof}

Define
\begin{align*}
p_c & := \inf\big\{p\in(0,1)~:~\phi_p(x\text{ is in an infinite cluster})>0\big\}\\
\tilde p_c & := \sup \big\{ p \in(0,1) ~:~ \lim_{n\rightarrow \infty}-\tfrac1n
\log\big[\phi_p(0 \lra \partial\Lambda_n)\big]>0\big\}.
\end{align*}
For $p < \tilde p_c$ we say that $\phi_p$ exhibits exponential decay since connection probabilities decay exponentially with the distance; 
for $p > p_c$, $\phi_p$ is supercritical, in that it contains a.s. an infinite cluster. 
It is immediate that $\tilde p_c\le p_c$. 
We wish to prove that $p_c=\tilde p_c$ (this is simply another way of stating the main result), 
and we therefore focus on the inequality $\tilde p_c\ge p_c$. 

As mentioned before, we adapt the argument of~\cite{DumMan14}, which consists of three steps:
\begin{itemize*}
\item  
First it is proved that, for $p > \tilde p_c$, 
the crossing probabilities under $\phi_{p}$  of $2n\times n$ rectangles in the easy direction 
are bounded away from $0$ uniformly in $n$. 
\item 
Building on this, in the second step, it is showed that the $\phi_p$-crossing probabilities of $2n\times n$ rectangles
in the hard direction are also bounded away from $0$ uniformly in $n$.
\item 
Finally, in the third step, assuming that $\tilde p_c < p_c$, it is showed that for $p \in (\tilde p_c, p_c)$, 
$\phi_{p} (\calC_h(2n,n)) \to 1$, as $n\to \infty$. 
The first step then implies that the dual of $\phi_{p'}$ exhibits exponential decay 
for any $p' \in (p, p_c)$, and this contradicts the fact that $p < p_c$. 
\end{itemize*} 

While the first step is not specific to planar lattices, the next two steps make use of planarity,
namely by "gluing" crossings and invoking duality. 
In Sections~\ref{sec:proof_RSW2} and~\ref{sec:mainproof} of the present paper, 
we adapt the arguments used in the last two steps to the setting of slabs.
An essential element is the "gluing" lemma discussed in Section~\ref{sec:glue}.

Adapting the final step requires particular attention,
since the dual of a random cluster measure $\phi_{p}$ on $\slab$ is not a random-cluster measure itself. 
To overcome this difficulty, we use certain bounds 
on the speed of convergence of $\phi_{p} (\calC_h(2n,n))$ to $1$ for $p \in (\tilde p_c,p_c)$.

\paragraph{Differential inequalities.}


For an event $A$ and a configuration $\om$ let $H_A(\om)$ be the Hamming distance between $\om$ and $A$,
that is the minimal number of edges whose state needs to be altered to obtain from $\om$ a configuration $\om' \in A$. 
Thus $H_A$ is a random variable taking non-negative integer values. 
Moreover, If $A$ is an increasing event, then $H_A$ is a decreasing random variable. 

The following lemma is the integrated form of the differential inequality of~\cite{GriPiz97}, as written in~\cite[Rem. 2.4]{DumMan14}. 
This is the cornerstone of our approach. 
\begin{lemma}
	Let $A$ be an increasing event depending only on the state of finitely many edges. Then, for $0< p < p' <1$,
	\begin{align}\label{eq:hamming_inf}
			\phi_{p'}(A)\ge \phi_{p}(A)\exp\big[4(p' - p)\phi_{p'}(H_A)\big],
	\end{align}
	where $\phi_{p'}(H_A)$ is the expectation of $H_A$ under $\phi_{p'}$.
	Similarly, if $A$ is decreasing,
		\begin{align}\label{eq:hamming_dec_inf}
			\phi_{p'}(A)\le \phi_{p}(A)\exp\big[-4(p'-p)\phi_{p}(H_A)\big].
	\end{align}
\end{lemma}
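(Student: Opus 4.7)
The plan is to derive both inequalities by integrating the differential Russo--Grimmett--Piza inequality, which for the random-cluster model with $q\ge 1$ and an increasing event $A$ depending on finitely many edges asserts that
\[
\frac{d}{dp}\log \phi_p(A) \;\geq\; 4\, \phi_p(H_A),
\]
with the opposite sign when $A$ is decreasing. This is the content of \cite{GriPiz97} and of the \cite[Rem.~2.4]{DumMan14} formulation referenced just above the statement, and I would take the differential form as input.

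The key structural observation will be that $H_A$ is monotone in the opposite sense to $A$: when $A$ is increasing, opening an additional edge can only bring $\om$ closer to $A$, so $H_A$ is a decreasing random variable; when $A$ is decreasing, $H_A$ is increasing. Since for $q\ge 1$ the family $(\phi_p)_{p\in(0,1)}$ is stochastically ordered in $p$, it follows that $p\mapsto \phi_p(H_A)$ is itself monotone: non-increasing when $A$ is increasing, non-decreasing when $A$ is decreasing.

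The main step is then to integrate the differential inequality on $[p,p']$ and replace the integrand by a single boundary value via this monotonicity. For increasing $A$, the bound $\phi_t(H_A)\ge \phi_{p'}(H_A)$ valid for all $t\le p'$ yields
\[
\log\phi_{p'}(A)-\log\phi_p(A)\;\ge\; 4\int_p^{p'}\phi_t(H_A)\,dt \;\ge\; 4(p'-p)\,\phi_{p'}(H_A),
\]
and exponentiating produces \eqref{eq:hamming_inf}. For decreasing $A$, the analogous bound $\phi_t(H_A)\ge \phi_p(H_A)$ for $t\ge p$, combined with the reversed sign of the differential inequality, gives \eqref{eq:hamming_dec_inf} in exactly the same manner. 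No serious obstacle is anticipated: since $A$ depends only on finitely many edges, $p\mapsto \phi_p(A)$ is smooth and strictly positive on $(0,1)$ so the integration is legitimate, and the only non-trivial input beyond differentiation is the stochastic monotonicity in $p$ for $q\ge 1$, which is standard.
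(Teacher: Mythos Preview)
Your proposal is correct and matches the standard derivation the paper is pointing to: the paper does not actually prove this lemma but simply records it as the integrated form of the differential inequality of \cite{GriPiz97}, in the formulation of \cite[Rem.~2.4]{DumMan14}. Your argument---take the differential inequality $\tfrac{d}{dp}\log\phi_p(A)\ge 4\,\phi_p(H_A)$ (respectively $\le -4\,\phi_p(H_A)$), use that $H_A$ is decreasing (resp.\ increasing) together with stochastic monotonicity of $\phi_p$ in $p$ to replace the integrand by its value at the appropriate endpoint, then integrate and exponentiate---is precisely how the integrated form is obtained from the differential one in those references.
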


The following lemma taken from~\cite[Thm.~3.45]{Gri06} will also be useful. 
\begin{lemma}\label{lem:bnddistoevent}
	Let $0< p < p' <1$. For any non-empty increasing event $A$, and any non-negative integer $k$,
	\begin{equation} \label{eq:chngphambnd}
		 \phi_{p}(H_A \leq k) \le C^k \phi_{p'}(A),
	\end{equation}
	where 
	$$C= \frac{q^2(1-p)}{(p'-p)[p'+q(1-p)]}.$$
\end{lemma}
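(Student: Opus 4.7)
The plan is to prove the lemma by a local modification / injection argument combined with the explicit form of the Radon-Nikodym derivative between $\phi_p$ and $\phi_{p'}$. Since $A$ is increasing and depends on finitely many edges (for the statement to be informative), the Hamming distance $H_A$ is a decreasing random variable, so the event $A^{(k)} := \{H_A \le k\}$ is itself increasing. For each $\omega \in A^{(k)}$ I would fix a canonical minimal modification set $F(\omega)$: a collection of at most $k$ closed edges of $\omega$ whose opening yields a configuration $\omega^* := \omega^{F(\omega),1}$ lying in $A$ (say, the lexicographically smallest one of minimum size). The map $\omega \mapsto (\omega^*, F(\omega))$ is then an injection from $A^{(k)}$ into $A \times \bigcup_{j\le k}\binom{E_A}{j}$, where $E_A$ is the finite support of $A$.

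The main step is to bound $\phi_{p'}(\omega^*)/\phi_p(\omega)$ below by $C^{-|F(\omega)|}$ for each such pair. Since $\omega^*$ differs from $\omega$ by opening exactly $j = |F(\omega)|$ edges, one has $o(\omega^*) = o(\omega)+j$, $c(\omega^*) = c(\omega)-j$, and $k(\omega^*) \ge k(\omega)-j$. Substituting into the explicit form of the random-cluster density and using a finite-energy estimate to control the remaining $\omega$-dependent factors uniformly yields
$$
	\phi_{p'}(\omega^*) \ge C^{-j} \phi_p(\omega).
$$
Summing over $\omega \in A^{(k)}$ and invoking the injectivity of the modification map produces the inequality of the lemma. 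The passage from finite volume to the infinite-volume measure on $\slab$ is then routine, since $A$ depends on finitely many edges.

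The main obstacle is extracting the precise constant $C = q^2(1-p)/[(p'-p)(p'+q(1-p))]$, which is delicate because the Radon-Nikodym derivative $d\phi_{p'}/d\phi_p$ depends on $o(\omega)$ and $c(\omega)$ and does not a priori give a uniform per-edge bound. A cleaner route is to induct on $k$, reducing each step to a single-edge comparison: one removes one edge $e$ from the modification set and applies the single-edge finite-energy bound $\phi_{p'}(\omega_e = 1 \mid \omega_{-e}) \ge p'/(p' + q(1-p'))$ together with the single-edge Radon-Nikodym comparison between $\phi_p$ and $\phi_{p'}$. The factor $q^2$ in $C$ then arises from the worst-case cluster-count change when the opened edge merges two disjoint clusters, while the factor $(p'-p)$ in the denominator reflects the \emph{sprinkling room} provided by the parameter gap between $p$ and $p'$.
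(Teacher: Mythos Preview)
The paper does not prove this lemma; it is quoted without proof from Grimmett's monograph (Theorem~3.45 there), so there is no in-paper argument to compare against. Evaluating your outline on its own, there is a genuine gap.

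The step that fails is the claimed pointwise bound $\phi_{p'}(\omega^*)\ge C^{-j}\phi_p(\omega)$. On a finite graph,
\[
\frac{\phi_{p'}(\omega^*)}{\phi_p(\omega)}
= \frac{Z(p,q)}{Z(p',q)}\,\Big(\frac{p'}{p}\Big)^{o(\omega)}\Big(\frac{1-p'}{1-p}\Big)^{c(\omega)}\cdot\frac{(p')^{j}}{(1-p')^{j}}\,q^{\,k(\omega^*)-k(\omega)},
\]
and the factor $(p'/p)^{o(\omega)}((1-p')/(1-p))^{c(\omega)}$ depends on the \emph{entire} configuration $\omega$, not merely on the $j$ modified edges. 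Since $(1-p')/(1-p)<1$ and $c(\omega)$ is unbounded, no uniform lower bound of the form $C^{-j}$ exists. Finite energy does not help: it controls the one-edge conditional law under a \emph{fixed} parameter, not the global Radon--Nikodym derivative between $\phi_p$ and $\phi_{p'}$. (There is a secondary problem even if this bound held: your injection $\omega\mapsto(\omega^*,F(\omega))$ allows many $\omega$'s to share the same $\omega^*$, so summing $\phi_{p'}(\omega^*)$ over $\omega\in A^{(k)}$ does not give $\phi_{p'}(A)$ but rather $\phi_{p'}(A)$ times a combinatorial factor depending on $|E_A|$.) You acknowledge the difficulty in your last paragraph, but the fix you propose --- induct on $k$ via a single-edge comparison --- does not close the gap either: a one-step estimate $\phi_p(A^{(k)})\le C\,\phi_{p'}(A^{(k-1)})$ cannot be iterated $k$ times with the same pair $(p,p')$, and subdividing $[p,p']$ into $k$ pieces yields a constant depending on $k$.

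The actual argument is not a density-ratio computation but a stochastic-comparison (sprinkling) one. In a monotone coupling of $\omega\sim\phi_p$ and $\omega'\sim\phi_{p'}$, each $\omega$-closed edge is $\omega'$-open with conditional probability bounded below by some $\rho=1/C>0$, uniformly over the rest of the configuration; the $(p'-p)$ in $C$ is precisely this sprinkling room. If $\omega\in\{H_A\le k\}$ with modification set $F$, then with probability at least $\rho^{|F|}\ge\rho^{k}$ all of $F$ is open in $\omega'$, so $\omega'\ge\omega^{F,1}\in A$ and hence $\omega'\in A$ by monotonicity. This yields $\phi_{p'}(A)\ge\rho^k\phi_p(H_A\le k)$ directly. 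Your outline never reaches this mechanism, and without it the constant $C$ --- in particular the factor $(p'-p)$ --- has no source.
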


\section{Gluing Lemma} \label{sec:glue}

One of the main challenges in percolation in dimensions higher than 2 is that Jordan's theorem does not apply. As a consequence, it is difficult to connect open paths together. Indeed, contrary to planar graphs, on non-planar graphs such as slabs, paths may overlap without intersecting. 
The gluing lemma is a tool to overcome this obstacle for slabs 
or for models with finite range interactions. 
Here we will only present it in the context of slabs. 

\begin{lemma}[Gluing Lemma] \label{lem:gluing}
	Let $D$ be a subset of $\Lat$ and $A_1,A_2,B_1,B_2 \subset D$.
	Suppose that the following deterministic topological condition is satisfied:
	\begin{align}\label{eq:topo_cond}
	\text{Any two paths $\chi, \ga \subset D$ connecting $A_1$ to $A_2$ and $B_1$ to $B_2$, respectively, intersect.}
	\end{align}
	
	In addition let $D'$ be a subset of $\Lat$ containing $D$ and $A_0$ be a subset of $D'$. 
	Define $\calA$ as the event that there exists an open cluster $C \subset \ovr{D'}$ intersecting $\ovr{A_0}$
	and that contains a path $\chi \subset \ovr{D}$ connecting $\ovr{A_1}$ and $\ovr{A_2}$.
	Let $\calB$ be the event that $\ovr{B_1}$ is connected to $\ovr{B_2}$ by an open path contained in $\ovr{D}$.
	Finally let $\calX$ be the event that there exists an open cluster $C' \subset D'$ that 
	intersects $\ovr A_0$ and contains a path $\ga \subset \ovr{D}$ connecting $B_1$ to $B_2$.
	Then the two following statements hold. 
	\begin{enumerate}[label=(\roman*)]
		\item \label{lem:gluing1} There exists a constant $c > 0$, only depending on $p$, $\Lat$ and $S$,  such that
			\begin{align}\label{eq:gluing1}
				\phi_p(\calX)\geq c\:\phi_p(\calA) \phi_p(\calB).
			\end{align}
		\item \label{lem:gluing2} There exists a constant $\beta > 0$, only depending on $p$, $\Lat$ and $S$, such that
			\begin{align}\label{eq:gluing2}
				\phi_p(\calX)\geq \phi_p(\calA) \phi_p(\calB)- \left(1-\phi_p(\calA)\right)^\beta.
			\end{align}
	\end{enumerate}
\end{lemma}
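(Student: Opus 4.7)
Part (i) should follow by combining the FKG inequality with a one-shot finite-energy local modification at a topologically forced overlap vertex. Since $\calA$ and $\calB$ are both increasing, FKG yields $\phi_p(\calA \cap \calB) \geq \phi_p(\calA)\phi_p(\calB)$. On this intersection I fix witnesses: a cluster $C \subset \ovr{D'}$ meeting $\ovr{A_0}$ and containing an open path $\chi \subset \ovr{D}$ from $\ovr{A_1}$ to $\ovr{A_2}$, together with an open path $\ga \subset \ovr{D}$ from $\ovr{B_1}$ to $\ovr{B_2}$. Projecting $\chi$ and $\ga$ onto $\Lat$ gives two paths in $D$ of exactly the type featured in \eqref{eq:topo_cond}, so they must share some vertex $g \in D$; equivalently, $\chi$ and $\ga$ both visit the fiber $\ovr{\{g\}}$.

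Next I fix an arbitrary ordering of $V_\Lat$ and let $g^\star(\om)$ be the smallest such overlap vertex on the event $\calA \cap \calB$. Conditioning on $\{g^\star = g\}$ and on the configuration outside $\ovr{\{g\}}$, the finite-energy property of $\phi_p$ applied to the (at most $|E_S|$) edges of the fiber $\ovr{\{g\}}$ yields a constant $c = c(p,q,S) > 0$ lower bound for the probability that every edge of that fiber is open. On this event the fiber forms a connected subgraph containing vertices of both $\chi$ and $\ga$, so $C$ absorbs $\ga$ and $\calX$ holds; summing over the disjoint events $\{g^\star = g\}_{g \in D}$ then delivers \eqref{eq:gluing1}.

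Part (ii) demands a sharper bound, polynomial rather than multiplicative in $1-\phi_p(\calA)$, so a single overlap vertex is no longer enough and many independent gluing attempts are needed. My plan is to condition on a canonical realization $\ga$ of $\calB$ (using the Domain Markov property and FKG to lower-bound the conditional measure on $\slab$ minus the edges of $\ga$ by $\phi_p$), and then to exploit that, by \eqref{eq:topo_cond}, any $\calA$-type cluster from $\ovr{A_0}$ is forced to cross the projected tube $\bar\ga$. Exploring that cluster fiber by fiber along $\bar\ga$ exposes a sequence of fresh fibers $\ovr{\{g_1\}}, \ovr{\{g_2\}}, \ldots$, each offering an independent attempt at the finite-energy gluing of part (i). A BK- or disjoint-occurrence argument should then bound the probability that $\calA$ holds yet every such attempt fails by $(1-\phi_p(\calA))^{\beta}$ for some $\beta = \beta(p,q,S) > 0$; combined with the FKG bound on $\phi_p(\calA\cap\calB)$ and the finite-energy constant from part (i), this yields \eqref{eq:gluing2}. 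The main obstacle will be arranging the successive trials to be sufficiently independent despite the shared conditioning on $\ga$ and the potential overlap of different $\calA$-witnesses, since in the non-planar slab one cannot isolate a unique witness as cleanly as in the planar argument of \cite{DumMan14}.
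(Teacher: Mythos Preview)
Your conditioning step has a genuine gap. You assert that, conditional on $\{g^\star = g\}$ and on the configuration outside the fiber $\ovr{\{g\}}$, finite energy gives a uniform lower bound $c$ on the probability that all fiber edges are open. But $\{g^\star = g\}$ is not an increasing event: it forbids any overlap vertex $g' < g$, a decreasing constraint. Opening all edges of $\ovr{\{g\}}$ can merge the $\calA$- and $\calB$-clusters through that fiber and thereby create new $\calA$- or $\calB$-witness paths running through some $\ovr{\{g'\}}$ with $g' < g$; the all-open inside configuration may then lie outside $\{g^\star = g\}$, and the conditional probability you need is $0$, not $\ge c$. Phrased as a map argument, your surgery ``open the fiber at $g^\star(\om)$'' does send $\calA\cap\calB$ into $\calX$ and alters only $|E_S|$ edges, but from the image $\sigma$ one cannot recover $g^\star(\om)$ (there may be many fully open fibers in $\sigma$), so $|\Psi^{-1}(\sigma)|$ is unbounded and the comparison fails. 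The paper's proof of (i) is built around precisely this issue: its surgery in a box $\Lambda_2(z)$ closes most boundary edges and inserts two specific open paths, arranged so that in the image $\sigma$ the modification point is the \emph{unique} vertex of the minimal path $\ga(\sigma)$ connected to $\ovr{A_0}$ off $\ga(\sigma)$. A delicate verification that the lexicographically minimal path is unchanged by the surgery is what makes this identifiability, and hence the preimage bound, work.

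\textbf{Part (ii).} The plan is too schematic and leans on unavailable tools: there is no BK/disjoint-occurrence inequality for the random-cluster model with $q>1$, and exploring fiber by fiber along $\bar\ga$ does not produce trials with any usable independence or domination. The paper's route is structurally different. It works on $\calY=(\calA\cap\calB)\setminus\calX$, introduces a \emph{random} edge-ordering (so as to define minimal paths $\ga^{(1)},\ga^{(2)}$ from $\ovr{B_1}$ and from $\ovr{B_2}$) together with a notion of \emph{almost-overlap} points, and dichotomises on whether their number is below or above a threshold $\alpha \asymp -\log\phi_p(\calA^c)$. If few, a single surgery closing all overlap columns disconnects $\ovr{A_0}$ from any $\ovr{A_1}$--$\ovr{A_2}$ crossing, landing in $\calA^c$ at cost $\phi_p(\calA^c)^{1/2}$. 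If many, a \emph{multi-valued} map performs connecting surgeries at $\lceil c'\alpha\rceil$ good almost-overlap points simultaneously, and the combinatorial gain $\binom{\alpha/4}{\lceil c'\alpha\rceil}$ beats the finite-energy cost. The random ordering is essential: it is what guarantees, via an averaging argument, that a positive fraction of almost-overlap points are ``good'' (i.e.\ the minimal path is preserved by the surgery there). None of this structure is visible in your outline, and the obstacle you flag in your last sentence is exactly the one these devices are designed to overcome.
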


\begin{figure}
\begin{center}
  \includegraphics[width=0.9\textwidth]{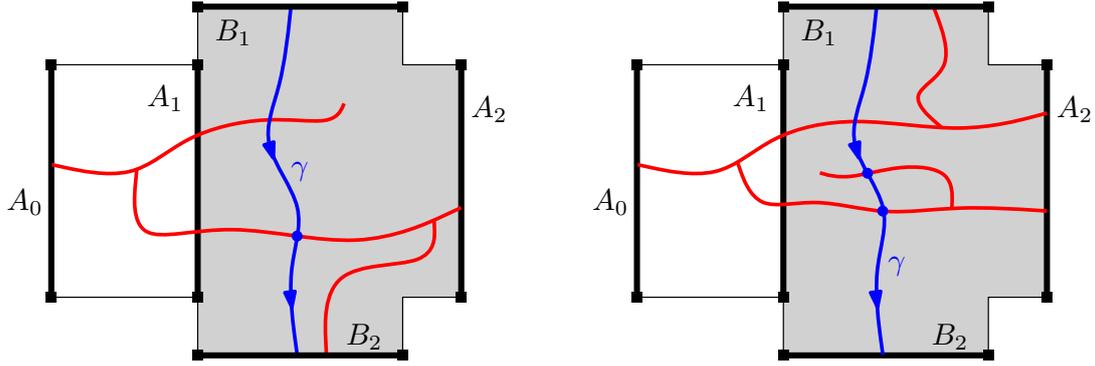}
\end{center}
\caption{The typical use of Lemma~\ref{lem:gluing} seen from "above". 
The grey area is $D$ which forms $D'$ with the additional white rectangle.
The blue path ensures the occurrence of $\calB$; the red cluster is the one in the definition of $\calA$. 
The two overlap but do not necessarily connect. \newline
Left: a configuration in $\calY^{(1)}$ but not in $\calY^{(2)}$;
right: a configuration in both $\calY^{(1)}$ and $\calY^{(2)}$. 
The overlap points are marked.}
\label{fig:gluing}
\end{figure}

The first statement may be understood as follows. 
If two open paths necessarily overlap, then they have a positive probability of being connected to each other. 
The second statement is a quantitative version of the first, useful in Section~\ref{sec:mainproof}.
It essentially states that if $\calA$ occurs with high probability, then the overlapping paths connect with high probability.  


Initially a version of this lemma appeared in~\cite{duminil2014absence} in the context of Bernoulli percolation.
Its proof does not essentially use independence; it relies on the finite-energy property, a property shared by the random-cluster model.
The property states that for any configuration $\om_0$ and any edge $e$
\begin{align}\label{eq:finiteenergy}
	\frac{p}{p + q(1-p)}\leq \phi_p\big[\om(e) = 1 \;\big|\; \om(f) = \om_0(f), \forall f \neq e\big] \leq p.
\end{align}
The second part of the lemma, although similar in spirit, requires several additional technical tricks.
We give a full proof of the two parts below. 
To help legibility, 
we start with the simpler statement~\ref{lem:gluing1},
we then discuss the additional elements needed to obtain~\ref{lem:gluing2}.

\newcommand{\preceqlex}{\preceq_{\text{lex}}}

\begin{proof}[Lemma~\ref{lem:gluing}\ref{lem:gluing1}]
Set $\calY = (\calA \cap \calB) \setminus \calX$.
In addition, for $i = 1,2$, let $\calY^{(i)} \subset \calY$ be the event that
there exists an open cluster in $\ovr{D}$ that 
contains a crossing from $A_1$ to $A_2$,
does not intersect $B_i$ and
is connected to $A_0$ in $\ovr{D'}$.
See Figure \ref{fig:gluing} for examples.
Note that $\calY^{(1)}$ and $\calY^{(2)}$ are not necessarily disjoint, but $\calY^{(1)} \cup \calY^{(2)} = \calY$.
The rest of the proof is dedicated to bounding the probability of $\calY^{(1)}$.

Let $\preceq$ be an ordering of the oriented edges of $\ovr{D}$. 
This induces a lexicographical ordering of the paths contained in $\ovr{D}$, 
which we will denote $\preceqlex$. 

For $\om \in \calB$, let $\ga = \ga(\om)$ be the minimal open path (for $\preceqlex$) 
contained in $\ovr D$, from $\ovr B_1$ to $\ovr B_2$.
We call a point $z \in \ga(\om)$ an \emph{overlap point} if there exists a cluster as in the definition of $\calY^{(1)}$ 
that intersects $\ovr{\{z\}}$.

We now define a map $\Psi : \calY^{(1)} \to \calX$ as follows. 
For $\om \in \calY^{(1)} $, because of the topological condition~\eqref{eq:topo_cond},
there exists at least one overlap point $z \in D$. 
We choose arbitrarily one such overlap point $z = z(\om)$. 

We define $\Psi(\om)$ by modifying the configuration $\om$ inside the region $\Lambda_2(z)$ as follows. 
Let $\ga_{i}$ and $\ga_j$ be the first and last points, respectively, of $\Lambda_1(z)$ visited by $\ga$. 
Let $a_0$ be a point of $ \partial \Lambda_1(z)$, connected to $A_0$ by an open path $(a_0, \dots, a_m)$, 
with $a_1, \dots, a_m \in \ovr{D'} \setminus \Lambda_1(z)$.
The existence of such a point is guaranteed by the fact that $z$ is an overlap point. 
In $\Psi(\om)$, edges with no endpoint in $\Lambda_1(z)$ have the same state as in $\om$. 
All edges with exactly one endpoint in $\Lambda_1(z)$ are declared closed, with the exception of
$(\ga_{i-1}, \ga_i)$, $(\ga_j,\ga_{j+1})$ and $(a_0,a_1)$, which are open 
(note that since $\om\notin \calX$, these three edges are distinct).
The edges with both endpoints in $\Lambda_1(z)$ are closed, with the exception of two open edge-disjoint paths 
$g = (g_0, \dots, g_k) \subset \Lambda_1(z)$ and $h = (h_0, \dots h_\ell)\subset \Lambda_1(z)$ such that
\begin{itemize*}
\item $g_0 = \ga_i$, $g_k = \ga_j$
\item $h_0 = g_t$ for some $1 \leq t \leq k-1$ and $h_\ell = a_0$ 
\item $(g_t, g_{t+1}) \preceq (g_t, h_1)$ and $g_t \in \ovr{\{z\}}$ (where $t$ is such that $g_t = h_0$).
\end{itemize*}
The existence of such a modification may easily be checked and we do not give additional details here.
See Figure~\ref{fig:connectingsurgery} for an illustration.
It is immediate that $\Psi(\om)$ is indeed in $\calX$. 

In order to compare $\phi_p(\calX)$ to $\phi_p(\calA\cap \calB)$, we will use the following simple relation
\begin{align}
\phi_p\big(\calY^{(1)}\big) 
&= \sum_{\om \in \calY^{(1)}} \phi_p( \Psi(\om) ) \frac{\phi_p(\om)}{\phi_p( \Psi(\om) )}\nonumber \\
&\leq \sup_{\om \in \calY^{(1)}}\frac{\phi_p(\om)}{\phi_p( \Psi(\om) )} 
\cdot \sup_{\sigma \in \Im(\Psi)}|\Psi^{-1}( \sigma ) |
\cdot \sum_{\sigma \in \Im(\Psi)} \phi_p( \sigma ).
\label{eq:simple_valued_map}
\end{align}
Since $\om$ and $\Psi(\om)$ only differ in $\Lambda_2(z)$, 
the finite energy property~\eqref{eq:finiteenergy} implies
$$\sup_{\om \in \calY^{(1)}}\frac{\phi_p(\om)}{\phi_p( \Psi(\om) )} \leq \left(\frac{q}{\min\{ p, 1-p\}}\right)^{|\Lambda_2|},$$
where $|\Lambda_2|$ denotes the number of edges of $\Lambda_2$.
Moreover, since $\Psi$ takes values in $\calX$, we have $\sum_{\sigma \in \Im(\Psi)} \phi_p( \sigma ) \leq \phi_p(\calX)$.

Let us now bound $\sup_{\sigma \in \Im(\Psi)}|\Psi^{-1}( \sigma ) |$.
Fix $\sigma \in \Im(\Psi)$ and $\om \in \Psi^{-1}( \sigma )$.
Recall that $\ga(\sigma)$ is the minimal $\sigma$-open path contained in $\ovr D$, from $\ovr B_1$ to $\ovr B_2$. 
(Such a path necessarily exists since $\sigma \in \calX$.)
We claim that, due to the nature of the modification applied to $\om$ in order to obtain $\Psi(\om) = \sigma$
and to the fact that $\preceqlex$ is lexicographical, 
$\ga(\sigma)$ coincides with $\ga(\om)$ 
up to the first time it enters $\Lambda_1(z)$ 
and after the last time it exits  $\Lambda_1(z)$. 
More precisely, we claim that $\ga(\sigma)$ is the concatenation of $\ga_{[0,i]}(\om)$, $(g_0, \dots, g_k)$ and $\ga_{[j,n]}(\om)$, 
(where $n$ is the length of $\ga(\om)$ and $i,j$ and $(g_0, \dots, g_k)$ are defined above). 
This fact is essential, and we give a detailed explanation below. 
\medskip

Let $\chi$ be the concatenation of $\ga_{[0,i]}(\om)$, $(g_0, \dots, g_k)$ and $\ga_{[j,n]}(\om)$ and 
suppose that $\ga(\sigma)\neq\chi$. 
Since $\chi$ is open in $\sigma$, it must be that $\ga(\sigma) \prec_{\text{lex}} \chi$. 
Let $\tau = \inf\{i \geq 0:  \chi_i \neq \ga_i(\sigma)\}$.
There are three possible situations; we analyse them separately and show that each leads to a contradiction.

Suppose $\tau > i + k$, i.e. $\ga(\sigma)$ and $\chi$ differ after exiting $\Lambda_1(z)$. 
Then $\ga(\sigma)$ can not visit $\Lambda_1(z)$ again, since the boundary of $\La_1(z)$ has only three $\si$-open incoming edges, 
and two of them have already been visited by $\ga(\si)$. 
Hence the latter part of $\ga(\sigma)$, namely $\ga_{[\tau, |\ga(\sigma)|]}(\sigma)$, is open in $\om$ as well as in $\sigma$.
Moreover $\ga_{[\tau, |\ga(\sigma)|]}(\sigma) \prec_{\text{lex}} \chi_{[\tau, |\chi|]} = \ga_{[j, |\ga(\om)|]}(\om)$, 
which contradicts the minimality of $\ga(\om)$. 

Suppose $i <\tau \leq i + k$, i.e. that $\ga(\sigma)$ and $\chi$ differ when in $\Lambda_1(z)$. 
Then the only possibility is that $\ga_{\tau}(\sigma) = h_1$ while $\chi_\tau = g_{t+1}$.
Since $(g_{t}, g_{t+1})\preceq (h_0,h_1)$, this contradicts the minimality of $\ga(\sigma)$. 

Finally suppose $\tau < i$. In particular
\begin{align*}
\ga_{[0, \tau+1]}(\sigma) \prec_{\text{lex}} \chi_{[0, \tau+1]} = \ga_{[0, \tau+1]}(\om). 
\end{align*}
The minimality of $\ga(\om)$ then implies that $\ga(\si)$ is not $\om$-open, 
hence it uses an edge with at least one endpoint in $\La_1(z)$.
This occurs after time $\tau$, since $\ga_{[0, \tau]}(\sigma) =\ga_{[0, \tau]}(\om)$ does not intersect $\La_1(z)$.

Let $\tau' > \tau$ be the first time $\ga(\sigma)$ visits the box $\Lambda_1(z)$.
Then $\ga_{\tau'}(\sigma) \in \{\ga_i(\om), \ga_j(\om), a_0\}$
(these are the only points of $\partial\La_1(z)$ accessible by $\si$-open edges from the outside).
It is not possible that $\ga_{\tau'}(\sigma) = a_0$, 
since $a_0$ is not connected to $B_1$ in $\ovr{D} \setminus \La_1(z)$ in the configuration $\sigma$
(we use the fact that  $\sigma = \om$ outside $\Lambda_1(z)$ and that $\om \in \calY^{(1)}$).
Thus $\ga_{\tau'}(\sigma) \in \ga(\om)$.
In other words, $\ga(\si)$ separates from $\ga(\om)$ at time $\tau$, 
then later joins $\ga(\om)$ again before visiting $\La_1(z)$. Let us show that this is impossible. 

Let $\tau''$ be the first time after $\tau$ when $\ga_{\tau''}(\sigma) \in \ga(\om)$.
The above discussion implies that
$\tau <\tau'' \leq \tau'$ and $\ga_{[0,\tau'']}(\sigma) \prec_{\text{lex}} \ga(\om)$.
This contradicts the minimality of $\ga(\om)$, 
since $\ga_{[0,\tau'']}(\sigma)$ is open in $\om$ 
and represents a more optimal first section for a connection from $B_1$ to $B_2$ in $D$. 

This concludes the proof of $\ga(\sigma) = \chi$. Let us return to the analysis of $\Psi^{-1}(\sigma)$. 
\medskip

\begin{figure}
\begin{center}
  \includegraphics[width=0.45\textwidth]{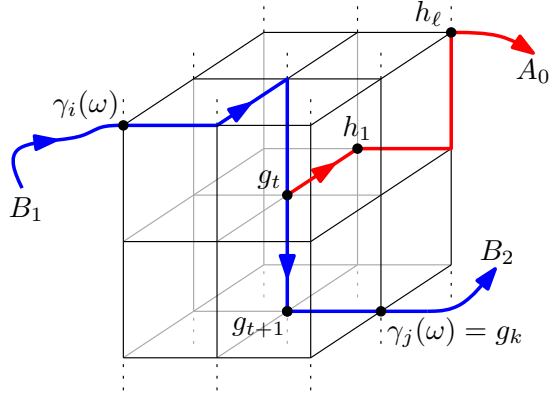}
\end{center}
\caption{The local modification performed on $\om$ in and around $\Lambda_1(z)$ to obtain $\Psi(\om)$. 
The blue path is $g$ and the red is $h$. Note that $(g_{t}, g_{t+1})\preceq (h_0,h_1)$. 
The central axis in the image is $\ovr z$. }
\label{fig:connectingsurgery}
\end{figure}

Note that $g_t$ is the unique point $x \in \ga(\sigma)$ that is connected by a $\sigma$-open path to $A_0$ in $D' \setminus \ga(\sigma)$.
Thus $g_t$ is determined by $\sigma$, and so is $z$, the first coordinate of $g_t$. 
Since $\om$ and $\sigma$ differ only inside $\Lambda_2(z)$, we obtain the bound
$$|\Psi^{-1}( \sigma ) | \leq 2^{|\Lambda_2(z)|} \quad \text{ for all } \sigma \in \Im(\Psi).$$
It follows from~\eqref{eq:simple_valued_map} and the above bounds that
\begin{align*}
	\phi_p(\calY^{(1)}) 
	\leq \left(\frac{2q}{\min\{ p, 1-p\}}\right)^{|\Lambda_2|} \phi_p( \calX).
\end{align*}
The same bound applies to $\phi_p(\calY^{(2)})$, and combining the two yields
\begin{align*}
	\phi_p(\calA) \phi_p( \calB) - \phi_p(\calX) 
	\leq \phi_p(\calY)
	\leq \phi_p(\calY^{(1)}) + \phi_p(\calY^{(2)}) 
	\leq 2\left(\frac{2q}{\min\{ p, 1-p\}}\right)^{|\Lambda_2|}  \phi_p( \calX),
\end{align*}
which leads to~\eqref{eq:gluing1}. 
\end{proof}

The idea for the proof of the second statement is that, if $\calA$ has high probability, 
then typically there must be a large number of overlap points, 
otherwise the connection between $A_0$, $A_1$ and $A_2$ could easily be broken (this is proved in Lemma~\ref{lem:Ysmall}).
Using this fact, we may associate to a configuration $\om \in (\calA\cap \calB) \setminus \calX$
not one, but many configurations $\sigma \in \calX$.
This in turn implies, using Lemma~\ref{lem:multivaluemap} below, 
that $\calX$ has much higher probability than $(\calA\cap \calB) \setminus \calX$.

Several technical difficulties occur in this argument, and the proof requires some new ingredients. 
In particular, the ordering of the edges used for defining the minimal path $\ga(\om)$ needs to be random. 

Let $\calO$ denote the set of total orderings of oriented edges of $\ovr{D'}$ and $\mu$ be the uniform measure on $\calO$.
Set $\nu = \phi_p \otimes \mu$ to be the measure on $\{0,1\}^{E(\slab)} \times \calO$ obtained as the product of $\phi_p$ and $\mu$.

\begin{lemma}\label{lem:multivaluemap}
Suppose we have two event $\calE, \calF \subseteq \{0,1\}^{E(\slab)} \times \calO$, and a map $\Psi$ from $\calE$ to $2^\calF$. 
Suppose that the following statements are true:
\begin{enumerate}
	\item
		If $(\omega, \preceq) \in \calE$ and $(\omega', \preceq')  \in \Psi(\omega, \preceq)$, 
		then $\preceq = \preceq'$.
	\item
		There exists $t > 0$ such that for each $(\omega, \preceq) \in \calE$, we have $\vert\Phi(\omega, \preceq)\vert \geq t$. 
	\item
		There exists $s$ such that for each $(\omega', \preceq) \in \calF$ 
		there exists a finite set $S(\om', \preceq)$ of edges with $\vert S(\om', \preceq) \vert \leq s$, 
		such that the configurations in $\Psi^{-1}(\omega', \preceq) := \{ \omega : (\omega',\preceq) \in \Psi(\omega, \preceq)\}$ 
		differ from $\omega'$ only inside $S(\om',\preceq)$.
	\end{enumerate} 
	Then the following statement is true:
	\begin{equation}
	\label{eq:multivaluemap}
	\nu(\calE)\leq \frac{1}{t}\Big(\frac{2q}{\min\{p,1-p\}}\Big)^s \nu(\calF).
	\end{equation}
\end{lemma}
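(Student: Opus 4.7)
The plan is to use a standard double-counting / multi-valued map argument, working with $\nu = \phi_p \otimes \mu$ and exploiting the first hypothesis to fix the ordering throughout.

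\textbf{Step 1: Reduce to fixed ordering.} By hypothesis (1), $\Psi$ preserves the ordering component, so we can write
\[
\nu(\calE) = \int_{\calO} \phi_p\big(\{\om : (\om,\preceq) \in \calE\}\big) \, d\mu(\preceq),
\]
and similarly for $\calF$. Thus it suffices to prove the bound on the inner $\phi_p$-mass for each fixed $\preceq$; the result then follows by integrating over $\calO$. I will therefore fix $\preceq$ and write $\calE_\preceq = \{\om : (\om,\preceq)\in\calE\}$ and similarly $\calF_\preceq$.

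\textbf{Step 2: Use the lower bound on fiber sizes to double-count.} Hypothesis (2) gives $|\Psi(\om,\preceq)| \geq t$ for every $(\om,\preceq)\in\calE$. Multiplying through and summing,
\[
t \cdot \phi_p(\calE_\preceq)
\;=\; t \sum_{\om \in \calE_\preceq} \phi_p(\om)
\;\leq\; \sum_{\om \in \calE_\preceq} \sum_{(\om',\preceq) \in \Psi(\om,\preceq)} \phi_p(\om).
\]
Now interchange the order of summation: each $\om'$ appearing on the right lies in $\calF_\preceq$, giving
\[
t \cdot \phi_p(\calE_\preceq) \;\leq\; \sum_{\om' \in \calF_\preceq} \ \sum_{\om \in \Psi^{-1}(\om',\preceq)} \phi_p(\om).
\]

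\textbf{Step 3: Bound each inner sum by finite energy.} This is the key analytic input. Fix $(\om',\preceq) \in \calF$. By hypothesis (3) every $\om \in \Psi^{-1}(\om',\preceq)$ agrees with $\om'$ off the set $S(\om',\preceq)$ of size at most $s$; hence there are at most $2^s$ such configurations. Moreover, the finite-energy inequality~\eqref{eq:finiteenergy} applied once per edge in $S(\om',\preceq)$ gives
\[
\phi_p(\om) \;\leq\; \Big(\tfrac{q}{\min\{p,1-p\}}\Big)^{s} \phi_p(\om')
\]
for every such $\om$ (each edge flip costs a factor at most $q/\min\{p,1-p\}$, since the conditional probability of any specified edge state is at least $\min\{p,1-p\}/(p+q(1-p)) \geq \min\{p,1-p\}/q$). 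Summing over the at most $2^s$ preimages,
\[
\sum_{\om \in \Psi^{-1}(\om',\preceq)} \phi_p(\om) \;\leq\; \Big(\tfrac{2q}{\min\{p,1-p\}}\Big)^{s} \phi_p(\om').
\]

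\textbf{Step 4: Conclude.} Combining Steps 2 and 3,
\[
t \cdot \phi_p(\calE_\preceq) \;\leq\; \Big(\tfrac{2q}{\min\{p,1-p\}}\Big)^{s} \phi_p(\calF_\preceq),
\]
and integrating over $\preceq$ with respect to $\mu$ yields~\eqref{eq:multivaluemap}. The argument contains no real obstacle; the only subtlety is making sure that hypothesis (1) really does let us fix $\preceq$ throughout (so that no cross-terms between different orderings appear when we swap sums), and that the constant from finite energy is correctly applied edge by edge on $S(\om',\preceq)$ rather than on all of $\slab$.
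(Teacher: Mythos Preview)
Your argument is correct and follows essentially the same route as the paper: bound $\phi_p(\om)/\phi_p(\om')$ by $(q/\min\{p,1-p\})^s$ via finite energy, double-count over pairs $(\om,\om')$ with $(\om',\preceq)\in\Psi(\om,\preceq)$, and use the preimage bound $|\Psi^{-1}(\om',\preceq)|\le 2^s$. The only cosmetic difference is that you disintegrate over $\preceq$ first and integrate at the end, whereas the paper keeps $\nu$ throughout; this changes nothing.
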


\begin{proof}
	The lemma is a generalization of~\cite[Lem. 7]{duminil2014absence}, and the proof is similar.
	Due to the finite energy property~\eqref{eq:finiteenergy} and to the third condition, 
	for all $(\om, \preceq) \in \calE$ and $(\om', \preceq) \in \Psi(\om, \preceq)$, we have
	$$
		\nu(\om, \preceq) 
		= \mu(\preceq) \phi_p(\om) 
		\leq \left(\frac{q}{\min\{ p, 1-p\}}\right)^s\cdot \mu(\preceq) \phi_p(\om')
		= \left(\frac{q}{\min\{ p, 1-p\}}\right)^s\cdot \nu(\om', \preceq).
	$$
	By summing over $(\om, \preceq) \in \calE$ and $(\om', \preceq) \in \Psi(\om, \preceq)$, we obtain: 
	\begin{align*}
		\nu(\calE)  
		&\leq \frac{1}{t} \left(\frac{q}{\min\{ p, 1-p\}}\right)^s 
			\sum_{(\om,\preceq) \in \calE} \sum_{(\om',\preceq) \in \Psi(\om, \preceq)} \nu(\om', \preceq) \\
		&= \frac{1}{t} \left(\frac{q}{\min\{ p, 1-p\}}\right)^s 
			\sum_{(\om',\preceq) \in \calF}  | \Psi^{-1}(\om', \preceq)| \cdot \nu(\om', \preceq) \\
		&\leq \ \frac{1}{t} \left(\frac{q}{\min\{ p, 1-p\}}\right)^s 	
			\sum_{(\om',\preceq) \in \calF} 2^s \: \nu(\om', \preceq)
		\ = \ \frac{1}{t} \left(\frac{2q}{\min\{ p, 1-p\}}\right)^s \nu(\calF).
	\end{align*}
\end{proof}

\begin{proof}[Lemma~\ref{lem:gluing}\ref{lem:gluing2}]
	Let $\om \in \calB$ and $\preceq \: \: \in \calO$. 
	As before, let $\preceqlex$ be the lexicographical order induced by $\preceq$ on oriented self-avoiding paths of $\ovr{D'}$.
	Set $\ga^{(1)} = \ga^{(1)}(\om, \preceq)$ to be the $\preceqlex$-minimal open path of $\ovr D$ from $\ovr B_1$ to $\ovr B_2$, 
	and $\ga^{(2)}(\om, \preceq)$ the $\preceqlex$-minimal open path of $\ovr D$ from $\ovr B_2$ to $\ovr B_1$.
	
	As in the previous proof, set $\calY = (\calA \cap \calB) \setminus \calX$ and consider $\om \in \calY$. 
	In the previous proof, we have defined overlap points. 
	Since in the present proof we will need to work with $\ga^{(1)}$ and $\ga^{(2)}$ simultaneously,
	we will define $(1)$-overlap points and $(2)$-overlap points.
	For $i =1,2$, let $W^{(i)} = W^{(i)}(\om, \preceq)$ be the set of points $z \in \Lat$, 
	such that $\ovr{\{z\}}$ intersects $\ga^{(i)}$ and 
	also intersects an open cluster $C$ of $\ovr D$ with the following properties: 
	\begin{itemize*}
	\item $C$ contains a path from $\ovr{A_1}$ to $\ovr{A_2}$,
	\item $C$ does not intersect $\ovr{B_i}$, 
	\item $C$ is connected to $\ovr{A_0}$ in $\ovr{D'}$.
	\end{itemize*}
	Call the points of $W^{(i)}$ $(i)$-overlap points. 
	Obviously a point can be simultaneously both a $(1)$ and $(2)$-overlap point.  
	See Figure~\ref{fig:gluing2} for an illustration. 
	
	Since $\om \in \calY$, any crossing in $\ovr D$ between $A_1$ and $A_2$ as in the definition of $\calA$ 
	necessarily contains at least one overlap point of each type.
		
	We also introduce the following related notion.
	For $i =1,2$, we say a point $z \in D$ is an \emph{(i)-almost-overlap point} if
	there exists $z' \in \La_1(z)$ and $s,s' \in S$ such that
	\begin{itemize*}
	\item $(z,s) \in \ga^{(i)}$,
	\item $(z',s')$ is connected to $\ovr A_0$ in $\ovr{D' \setminus \{z\}}$,
	\item $(z',s')$ is not connected to $\ovr B_i$ in $\ovr{D}$,
	\item $(z,s') \notin \ga^{(i)}$. 
	\end{itemize*}
	Let $U^{(i)}(\om, \preceq)$ denote the set of $(i)$-almost-overlap points.
	It will be useful to note that for $i=1,2$, $W^{(i)}(\om, \preceq) \subset U^{(i)}(\om, \preceq)$. 
	To be precise, an $(i)$-almost-overlap point is a $(i)$-overlap point if in addition to the conditions above, 
	$z = z'$ and $(z,s')$ is connected to both $A_1$ and $A_2$ in $\ovr D$. 
	
	Our aim is to bound $\phi_p(\calY) = \nu(\calY \times \calO)$. 
	To do this we will split $\calY$ in three events. 
	Since these will depend on the (random) order $\preceq$, we will henceforth work with couples $(\om,\preceq)$. 
	
	Fix a constant $c >0$ that we will identify later in the proof (see the end of the proof of Lemma~\ref{lem:Ysmall}),
	and define $\alpha =  -c  \log (\phi_p(\calA^c))$. 
	Define the following events:
	\begin{align*}
		\calY_{\leq \alpha} &= 
		\Big\{ (\om, \preceq):\  \om \in \calY, 
		\vert U^{(1)}(\om, \preceq) \vert \leq \alpha \text{ and } \vert U^{(2)}(\om, \preceq) \vert \leq \alpha \Big\}, \\
		\calY_{> \alpha}^{(1)} &= \big\{(\om, \preceq):\  \om \in \calY,\vert U^{(1)}(\om, \preceq) \vert > \alpha \big\},\\
		\calY_{> \alpha}^{(2)} &= \big\{(\om, \preceq):\  \om \in \calY, \vert U^{(2)}(\om, \preceq) \vert > \alpha \big\}.
	\end{align*}
	Note that indeed $\calY\times \calO = \calY_{\leq \alpha} \cup \calY_{> \alpha}^{(1)} \cup \calY_{> \alpha}^{(2)}$, 
	but that the two latter events are not necessarily disjoint. 
	We start by bounding the probability of the first event.

	\begin{figure}
	      \begin{center}
	        \includegraphics[width=0.7\textwidth]{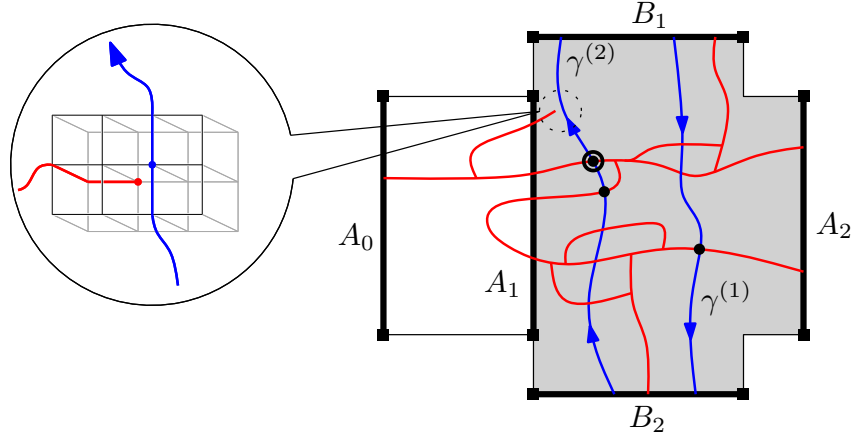}
	      \end{center}
	      \caption{The red cluster ensures the occurrence of the event $\calA$. 
	      The blue paths are $\ga^{(1)}$ and $\ga^{(2)}$.
	      Not all intersections between the blue and red paths are overlap points, 
	      only the marked ones are. 
	      Out of these, only the doubly marked point is in $\calW$. 
	      The encircled region contains a $(2)$-almost-overlap point. }
	      \label{fig:gluing2}
	\end{figure}
	
	\begin{lemma}\label{lem:Ysmall} 
		Provided that the constant $c > 0$ in the definition of $\alpha$ is small enough, we have
		$$\nu \big( \calY_{\leq \alpha} \big) \leq \sqrt{\phi_p(\calA^c)}.$$
	\end{lemma}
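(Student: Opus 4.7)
The plan is to exploit the finite-energy property~\eqref{eq:finiteenergy}: on $\calY_{\leq\alpha}$, the witness cluster for $\calA$ is pinned through only a few columns of the slab, so a local surgery can destroy it at a probabilistic cost at most exponential in $\alpha=-c\log\phi_p(\calA^c)$. A bound of the form $\nu(\calY_{\leq\alpha})\le C^{\alpha}\phi_p(\calA^c)$ then reads $\phi_p(\calA^c)^{1-c\log C}$, and choosing $c$ small enough yields the target $\sqrt{\phi_p(\calA^c)}$.

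First I would split $\calY_{\leq\alpha}$ according to whether the witness cluster for $\calA$ avoids $\ovr{B_1}$ or $\ovr{B_2}$, mirroring the decomposition $\calY=\calY^{(1)}\cup\calY^{(2)}$ used in part~\ref{lem:gluing1}; by symmetry it suffices to bound the probability of the first piece, which I will temporarily denote $\calZ$. On $\calZ$ the topological hypothesis~\eqref{eq:topo_cond} forces any $\ovr{A_1}$-to-$\ovr{A_2}$ path inside the witness cluster to meet $\gamma^{(1)}$ after projection to $\Lat$, and every such projected intersection is a point of $W^{(1)}\subset U^{(1)}$.

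Next I would define a surgery $\Phi$ as follows: given $(\om,\preceq)\in\calZ$, close every edge of $\ovr{D'}$ with at least one endpoint in $\ovr{\La_1(z)}$ for some $z\in U^{(1)}(\om,\preceq)$. Since $|U^{(1)}|\le\alpha$ and each $\La_1(z)$ has uniformly bounded size, at most $M\alpha$ edges are altered, where $M=M(\Lat,S)$. The key claim is that $\Phi(\om,\preceq)\in\calA^c$: any hypothetical witness cluster in the modified configuration would, by~\eqref{eq:topo_cond}, project through some point $z$ of $\gamma^{(1)}$, and the four conditions defining a $(1)$-almost-overlap point — with the flexibility $z'\in\La_1(z)$ and $s,s'\in S$ — are precisely what such a hypothetical cluster provides back in the original $\om$, forcing $z\in U^{(1)}(\om,\preceq)$; this contradicts that $\ovr{\La_1(z)}$ has been severed.

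Finally I would invoke Lemma~\ref{lem:multivaluemap} with $\Psi(\om,\preceq):=\{(\Phi(\om,\preceq),\preceq)\}$, $t=1$ and $s=M\alpha$: preimages of a fixed $(\om',\preceq)$ differ from $\om'$ only inside the surgery region, whose location can be read off from $\om'$ (e.g.\ as the columns whose incident edges are all closed), giving the required uniform bound. This yields $\nu(\calZ)\le C_0^{\alpha}\phi_p(\calA^c)$ with $C_0=(2q/\min\{p,1-p\})^M$. Substituting $\alpha=-c\log\phi_p(\calA^c)$ and choosing $c$ so that $1-c\log C_0>\tfrac12$ (the range $\phi_p(\calA^c)\ge\tfrac14$ being trivial since $\nu(\calY_{\leq\alpha})\le 1$) yields the lemma after summing the two symmetric contributions. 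The main obstacle is the key claim in the surgery step: one must rigorously verify that the flexibility $z'\in\La_1(z)$ built into the definition of an almost-overlap point exhausts the ways a would-be cluster could detour through the $S$-direction around the severed columns, so that the surgery truly destroys $\calA$.
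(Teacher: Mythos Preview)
Your overall plan --- surgery into $\calA^c$ at a probabilistic cost exponential in $\alpha$, then substitute $\alpha=-c\log\phi_p(\calA^c)$ --- is exactly the paper's strategy. The gap is in the application of Lemma~\ref{lem:multivaluemap}, specifically condition~3.

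Your surgery closes every edge with an endpoint in $\ovr{\La_1(z)}$ for $z\in U^{(1)}(\om,\preceq)$. But each $z\in U^{(1)}$ carries a point $(z,s)\in\gamma^{(1)}$, so your surgery severs $\gamma^{(1)}$ itself. Once $\gamma^{(1)}(\om,\preceq)$ is no longer open in the image $\om'$, you cannot recover it from $(\om',\preceq)$: the $\preceqlex$-minimal $\om'$-open path from $\ovr{B_1}$ to $\ovr{B_2}$ (if one even exists) need not be $\gamma^{(1)}(\om,\preceq)$. Consequently you cannot recover $U^{(1)}(\om,\preceq)$, and different preimages $\om$ of the same $\om'$ will in general have different minimal paths and different sets $U^{(1)}$. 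The set $S(\om',\preceq)$ required by condition~3 would have to contain the union of all these surgery regions, with no bound in sight. Your proposed recipe --- columns whose incident edges are all closed --- does not help: arbitrarily many columns may already be entirely closed in $\om$, so that set has no uniform bound of order $\alpha$.

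The paper's surgery is designed precisely to survive this step. It modifies $\om$ only at a subset $\calW\subset W^{(1)}\cup W^{(2)}$ of overlap points and, crucially, \emph{keeps every edge of $\gamma^{(1)}$ and $\gamma^{(2)}$ open}. Since the surgery then only closes edges and the minimal paths remain open, one gets $\gamma^{(i)}(\sigma,\preceq)=\gamma^{(i)}(\om,\preceq)$ for $i=1,2$, computable from $(\sigma,\preceq)$ alone. One then checks that every modified site is an almost-overlap point of $\sigma$; since closing edges cannot create almost-overlap points, $U^{(i)}(\sigma,\preceq)\subset U^{(i)}(\om,\preceq)$ and $|U^{(1)}(\sigma)\cup U^{(2)}(\sigma)|\le 2\alpha$ furnishes the bounded set $S(\sigma,\preceq)$. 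This is also why the paper does \emph{not} split $\calY_{\le\alpha}$ along $\calY^{(1)}/\calY^{(2)}$ but works with both $\gamma^{(1)}$ and $\gamma^{(2)}$ at once.

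Your own caveat at the end is also a real issue, and in fact your split makes it worse: on $\calZ\subset\calY^{(1)}$ there exists \emph{some} witness cluster avoiding $\ovr{B_1}$, but a putative witness surviving in $\om'$ may sit (in $\om$) inside a cluster that touches $\ovr{B_1}$, so at the projected intersection $z$ the clause ``$(z',s')$ not connected to $\ovr{B_1}$'' in the definition of $U^{(1)}$ need not hold, and $z$ need not lie in $U^{(1)}$. But the failure of condition~3 is already fatal on its own.
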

	
	The idea behind this lemma is that, for $(\om, \preceq) \in \calY_{\leq \alpha} $,
	the connection between $A_1$ and $A_2$ in $\ovr D$ is fragile, 
	since it only has few overlap points with $\ga^{(1)}$ and $\ga^{(2)}$. 
	Thus, it is easy to break this connection, 
	and this leads to an upper bound on $\nu(\calY_{\leq \alpha})$ in terms of $ \phi_p(\calA^c)$.
	
	\begin{proof}
		Define a map $\Psi: \calY_{\leq \alpha}\rightarrow \calA^{c} \times \calO$ as follows.  
		Take $(\om,\preceq) \in \calY_{\leq \alpha}$. 
		Let $\calW(\om, \preceq)$ be the set of points $z \in W^{(1)}\cup W^{(2)}$ 
		such that $\ovr{\{z\}}$ is connected to $\ovr{A_0}$ without 
		using other points of $\ovr{W^{(1)}\cup W^{(2)}}$.
		It is essential to remark that, since $\om \in \calY$, $\calW(\om, \preceq) \neq \emptyset$. 
		
		Let $\Psi(\om, \preceq) = (\si,\preceq)$ 
		with $\si$ equal to $\om$ for all edges with no end-point in $\ovr{\calW(\om, \preceq)}$.
		The edges with at least one end-point in $\ovr{\calW(\om,\preceq)}$ are declared closed in $\si$, 
		unless they are part of $\ga^{(1)}$ or $\ga^{(2)}$, in which case they remain open. 
		
		Let us show that $\Psi(\om,\preceq) \in \calA^c \times \calO$, that is $\si \notin \calA$.
		Suppose that this is not the case, and that $\si \in \calA$.
		We know that $\si \in \calB$ ($B_1$ and $B_2$ being united by $\ga^{(1)}$ and $\ga^{(2)}$);
		moreover $\si \leq \om$ so $\si\notin \calX$. 
		We therefore conclude that $\si \in \calY$.
		Then, by the topological condition~\eqref{eq:topo_cond}, in $\si$
		there exists at least one overlap point $z_0$, which is connected to $A_0$ in $\ovr{D'}$.
		Let $\chi$ be a $\si$-open path in  $\ovr{D'}$ from $z_0$ to $A_0$. 
		Denote $(z_1, s_1)$ the last point on $\chi$ such that $z_1$ is an overlap point. 
		Then $z_1 \in \ovr{\calW(\om,\preceq)}$ and hence the edges emanating from $(z_1,s_1)$ should be closed in $\si$. 
		This contradicts the fact that $(z_1,s_1)$ is connected to $A_0$ in $\si$.
		We have therefore shown that $\si \notin \calA$.
		
		We now use Lemma~\ref{lem:multivaluemap} to bound the probability of the event under study.
		Condition 1 is satisfied by definition; condition 2 is satisfied with $t = 1$. 
		We focus on the third condition. 
		
		Let $(\sigma,\preceq) \in \Im(\Psi)$ and $\om \in \Psi^{-1}(\sigma,\preceq)$. 
		Since any open edge of $\sigma$ is also open in $\om$ and $\ga^{(1)}(\om)$ and $\ga^{(2)}(\om)$
		are both open in $\sigma$, we have $\ga^{(i)}(\sigma) = \ga^{(i)}(\om)$ for $i=1,2$. 
		Moreover, in going from $\om$ to $\sigma$, we do not create new almost-overlap points, 
		i.e. $U^{(i)}(\sigma,\preceq) \subset U^{(i)}(\om,\preceq)$.
		Finally we observe that, by definition of $\Psi$, 
		all points where modifications were made when going from $\om$ to $\sigma$, 
		are almost-overlap points of $\sigma$.
		In conclusion, $\om$ and $\sigma$ only differ in the vicinity of points in 
		$U^{(1)}(\sigma,\preceq) \cup U^{(2)}(\sigma,\preceq)$, 
		and there are at most $2 \alpha$ of these. 
		It follows that the third condition of Lemma \ref{lem:multivaluemap} is satisfied with
		$ s = 2 \alpha K$,  where $K$ is the number of edges of $\La_1$. 

		Using the definition of $\alpha$ and Lemma \ref{lem:multivaluemap}, we obtain
		\begin{align*}
			\nu(\calY_{\leq\alpha})
			\leq\left(\frac{2q}{\min\{p,1-p\}}\right)^{2\alpha K} \nu(\calA^{c} \times \calO)
			= \phi_p(\calA^c)^{1 -  2 c K \log\left(\frac{2q}{\min\{ p, 1-p\}}\right) },
		\end{align*}
		which implies the lemma provided that
		$c \leq \left[ 4 K \log\left(\frac{2q}{\min\{ p, 1-p\}}\right)\right]^{-1}$.
	\end{proof}
	
	\begin{remark}
		Given a configuration $\si$ in the image of $\Psi$, the almost-overlap points of $\si$, 
		rather than simply the overlap points, 
		are the places where modifications may have been performed when constructing $\si$ from one of its pre-images.
		This explains the necessity of introducing the additional notion of almost-overlap point.
	\end{remark}
	
	We will now focus on bounding the probabilities of $\calY_{> \alpha}^{(i)}$ for $i =1,2$. 
	More specifically we will prove the following.
	
	\begin{lemma}\label{lem:Ylarge}
		There exists a constant $\beta >0$ depending on $p$, $\Lat$ and $S$ only, such that, for $\phi_p(\calA^c)$ small enough  
		\begin{align}\label{eq:Ylarge}
			\nu \big(\calY^{(i)}_{\geq \alpha}  \big) \leq \phi_p(\calA^c)^\beta,
		\end{align}
		for $i=1,2$. 
	\end{lemma}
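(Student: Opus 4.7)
The plan is to apply Lemma~\ref{lem:multivaluemap} with a multi-valued map $\Psi:\calY^{(i)}_{>\alpha}\to 2^{\calF}$, where $\calF=\calX\times\calO$, designed so that the image cardinality is exponentially large in $\alpha$ while the pre-image modification region grows only linearly in $\alpha$. The design mirrors the map used in Lemma~\ref{lem:Ysmall}, but instead of one destructive surgery that severs the connection to $\ovr{A_0}$, we now perform many creative surgeries each of which places the configuration into $\calX$; the exponential gain comes from the freedom of choosing which surgeries to activate. Concretely, for $(\om,\preceq)\in\calY^{(i)}_{>\alpha}$ I first extract from $U^{(i)}(\om,\preceq)$ a subset $V(\om,\preceq)$ of pairwise well-separated almost-overlap points (at pairwise $\Lat$-distance $>r$ for some fixed $r$), with $|V(\om,\preceq)|\ge \alpha/K_1$ by a greedy packing argument ($K_1$ depending only on the volume of an $r$-ball in $\Lat$). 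For each non-empty $T\subseteq V(\om,\preceq)$ I then construct $\sigma_T$ by performing at every $z\in T$ a local modification supported in $\Lambda_r(z)$, patterned on Lemma~\ref{lem:gluing1}: open a short path connecting the vertex of $\ga^{(i)}$ over $z$ to the neighbour $(z',s')$ witnessing the almost-overlap, thereby linking $\ga^{(i)}$ inside $\ovr D$ to the cluster leading to $\ovr{A_0}$. Because the modification regions are disjoint and each non-empty $T$ realises the required cluster, the configurations $\sigma_T$ are pairwise distinct and all lie in $\calX$, yielding $|\Psi(\om,\preceq)|=2^{|V(\om,\preceq)|}-1\ge 2^{\alpha/K_1-1}$.

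To bound the pre-image, I adapt the strategy from Lemma~\ref{lem:gluing1}: given $(\sigma,\preceq)\in\calF$, I define a candidate set $V^*(\sigma,\preceq)$ as the set of points on the minimal $\sigma$-open path $\ga^{(i)}(\sigma)$ at which short $\sigma$-open excursions reach $\ovr{A_0}$ without revisiting $\ga^{(i)}(\sigma)$, and set $S(\sigma,\preceq)$ to be the set of edges contained in $\bigcup_{z\in V^*}\Lambda_r(z)$. One has to verify that every pre-image $\om$ agrees with $\sigma$ outside $S(\sigma,\preceq)$ and that $|V^*|$ is bounded by a constant multiple of $|V|$, so that $|S|\le K_2\alpha$ for some $K_2$ depending on $p,q,\Lat,S$. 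Combining these inputs in Lemma~\ref{lem:multivaluemap} gives
\begin{equation*}
\nu(\calY^{(i)}_{>\alpha})\le
\Big[\tfrac{1}{2}\big(\tfrac{2q}{\min\{p,1-p\}}\big)^{K_1K_2}\Big]^{\alpha/K_1}\nu(\calF),
\end{equation*}
which, together with $\alpha=-c\log\phi_p(\calA^c)$ and a sufficiently small constant $c$, produces the desired bound $\nu(\calY^{(i)}_{>\alpha})\le \phi_p(\calA^c)^\beta$ for some $\beta>0$.

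The principal obstacle is to reconcile the image-size gain $t=2^{|V|}$ with the finite-energy cost $C^{|S|}$ in Lemma~\ref{lem:multivaluemap}: the local surgery must alter only a handful of edges per site (not the whole interior of $\Lambda_r$), and the candidate set $V^*(\sigma,\preceq)$ must remain tightly comparable to $V(\om,\preceq)$. Both requirements hinge on a careful, minimally-invasive design of the modification, significantly more delicate than the one used in Lemma~\ref{lem:gluing1}: it must simultaneously leave a detectable fingerprint on $\sigma$ so that $V^*(\sigma,\preceq)$ can be recovered, and yet be economical enough to keep the pre-image volume under control.
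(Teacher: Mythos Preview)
Your combinatorial bookkeeping does not close. With $t\ge 2^{|V|-1}$ and $s$ at best linear in $|V|$ (even one edge per site forces $s\ge |V|$, since the uniform bound in Lemma~\ref{lem:multivaluemap} must accommodate the choice $T=V$), the output of Lemma~\ref{lem:multivaluemap} is
\[
\nu(\calY^{(i)}_{>\alpha})\ \le\ \frac{1}{t}\,C^{\,s}\ \le\ 2\,\bigl(C/2\bigr)^{|V|},
\qquad C=\tfrac{2q}{\min\{p,1-p\}}\ge 4,
\]
which diverges rather than decays. No ``minimally invasive'' surgery can rescue this: to place $\sigma_T$ in $\calX$ you must open at least one edge per point of $T$, so the per-site cost is at least $C>2$, while the per-site gain from taking all subsets is exactly $2$. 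The paper resolves this by \emph{not} ranging over all subsets: it fixes $j=\lceil c'\alpha\rceil$ with $c'$ small and takes only $j$-element subsets of the good set $V$. Then $s=j\,|\Lambda_1|$ is uniform and
\[
\frac{C^{\,s}}{t}=\frac{Q^{\,j}}{\binom{|V|}{j}}\ \lesssim\ \bigl(Q\cdot 4c'\bigr)^{c'\alpha},
\qquad Q=C^{|\Lambda_1|},
\]
which is exponentially small once $c'<1/(4Q)$. This decoupling of ``many choices'' ($\binom{|V|}{j}$) from ``few modifications'' ($j$) is the missing idea.

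A second gap is the recovery step. You assert that $V^*(\sigma,\preceq)$, read off from $\ga^{(i)}(\sigma)$, is comparable to $V(\om,\preceq)$; but this presupposes $\ga^{(i)}(\sigma,\preceq)=\ga^{(i)}(\om,\preceq)$, which your surgery does not guarantee. If the surgery at $z$ opens an edge emanating from $\ga^{(i)}(\om)$ that is $\preceq$-preferred to the edge $\ga^{(i)}(\om)$ actually takes, the minimal path in $\sigma$ detours, and your fingerprint is lost. The paper handles this by restricting surgeries to \emph{good} almost-overlap points, defined via the random ordering so that $(\ga_j,\ga_{j+1})\preceq$ the first edge of the added link; Lemma~\ref{lem:supnicehigh} then shows that, conditionally on $\om$ and $\ga$, a positive fraction of almost-overlap points are good. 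This is precisely why the random order is introduced, and your proposal does not use it.
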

	
	By symmetry we can concentrate on bounding $\phi_p(\calY_{> \alpha}^{(1)})$. 
	To simplify notation, we will henceforth omit the index $(1)$. 
	
	The idea behind this lemma is that, for $(\om, \preceq) \in \calY_{> \alpha}$,
	the multitude of almost-overlap points gives many opportunities for $\ga$ to connect to $A_0$. 
	Thus, the probability of $\calY_{> \alpha}$ should be much smaller than that of $\calX$, 
	and this will ultimately yield the bound \eqref{eq:Ylarge}.
	
	To make this heuristic rigorous, we will define a multi-valued map $\Psi: \calY_{> \alpha}\to 2^{\calX \times \calO}$ 
	and apply Lemma \ref{lem:multivaluemap}.
	As suggested above, the function $\Psi$ will consist in connecting $A_0$ to $\ga$ by modifying the configuration locally 
	around certain almost-overlap points; 
	we say we will perform a connecting surgery at these points. 
	Not all almost-overlap points are suited to perform the connecting surgery, 
	and we start by identifying those who are. 

	Fix in $S$ an arbitrary system of geodesics uniting any pair of points $s, s' \in S$;
	such a system always exists since $S$ is connected. 
	We may then talk of the segment between $s$ and $s'$, which we denote by $[s,s']$.
	As mentioned in the introduction, one may think of $S =  \{0,\dots,k\}$, in which case the segment between $s$ and $s' > s$ is simply 
	$[s,s'] = \{s, s+1, \dots, s'\}$. 
	For $(\om, \preceq) \in \calY\times \calO$, 
	we call a point $z \in D$ a \emph{good almost-overlap} point, if it is an almost-overlap point 
	(with $z', s$ and $s'$ as in the definition of $(1)$-almost-overlap points) 
	and in addition
	\begin{itemize*}
	\item there is no $t$ strictly between $s$ and $s'$ such that $(z,t) \in \ga$ and
	\item if $\ga_j = (z,s)$ and if $t \in S$ is the first point after $s$ when going from $s$ to $s'$ along $[s,s']$,
		then $(\ga_j, \ga_{j+1}) \preceq ((z,s),(z,t))$.
	\end{itemize*}
	Let $V(\om, \preceq)$ be the set of good almost-overlap points. 
	The following lemma states that generally a positive proportion of almost-overlap points are good. 
	
	\begin{lemma} \label{lem:supnicehigh}
		For any configuration $\om_0 \in \calY$ and path $\gamma_0$, 
		\begin{align}\label{eq:supnicehigh}
			\nu \Big[ \vert V(\om_0,\preceq) \vert \geq \tfrac{1}{4} \vert U(\om_0, \preceq)\vert \:\Big\vert\: 
			 \om= \om_0 ; \ga(\om_0, \preceq) = \gamma_0 \Big] 
			\geq \frac{1}{4},
		\end{align}
		whenever the conditioning is not void. 
	\end{lemma}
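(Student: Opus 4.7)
The plan is to bound the conditional expectation of $|V(\om_0,\preceq)|$ from below by $|U|/2$, and then to conclude by Markov's inequality applied to the non-negative variable $|U|-|V|$. Under the conditioning $\om = \om_0$ and $\ga(\om_0,\preceq) = \ga_0$, the set $U := U^{(1)}(\om_0,\preceq)$ depends only on $\om_0$ and $\ga_0$, so it is deterministic. If we can show $\nu(z \in V \mid \om_0,\ga_0) \geq \tfrac12$ for every $z \in U$, then $\mathbb{E}_\nu[|V|\mid\om_0,\ga_0] \geq |U|/2$ by linearity. Markov applied to $|U| - |V|$ then yields $\nu(|V| < |U|/4 \mid \om_0, \ga_0) \leq (|U|/2)/(3|U|/4) = 2/3$, whence $\nu(|V| \geq |U|/4 \mid \om_0, \ga_0) \geq 1/3 \geq 1/4$, as required.

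For the per-$z$ bound, fix $z \in U$ and select a canonical witness $(z',s,s')$ in the $(1)$-almost-overlap definition with $s$ and $s'$ adjacent along the geodesic $[s,s'] \subset S$; this makes the first sub-condition for a good almost-overlap point (the absence of $t$ strictly between $s$ and $s'$ with $(z,t) \in \ga_0$) automatic. Such a witness is obtained from the original almost-overlap witness using the column structure $\ovr{\{z'\}}$ together with the hypothesis $\om_0 \in \calY$; this construction is the main technical obstacle in the plan. With $j$ the unique index such that $\ga_j = (z,s)$, set $f_z := (\ga_j,\ga_{j+1})$ and $e_z := ((z,s),(z,s'))$; the second sub-condition then reduces to $f_z \preceq e_z$, so $z \in V$ iff $f_z \preceq e_z$.

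Now invoke the greedy characterization of the $\preceqlex$-minimal open self-avoiding path: $\ga(\om_0,\preceq) = \ga_0$ is equivalent to $\bigcap_j A_j$, where $A_j$ denotes the event that $(\ga_j,\ga_{j+1})$ is the $\preceq$-minimum of the set $E_j$ of $\om_0$-open edges $(\ga_j,w)$ with $w \notin \{\ga_0,\dots,\ga_j\}$ such that $w$ is joined to $\ovr{B_2}$ by an $\om_0$-open self-avoiding path in $\ovr D$ avoiding $\{\ga_0,\dots,\ga_j\}$. Since edges in $E_j$ emanate from $\ga_j$, the sets $E_j$ are pairwise disjoint as $j$ varies, and hence the events $(A_j)_j$ are mutually $\mu$-independent. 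Two cases now arise for $z$: if $e_z \in E_j$, then $A_j$ forces $f_z \preceq e_z$, so $z \in V$ deterministically; otherwise $e_z \notin E_j$ and also $e_z \notin E_{j'}$ for any $j' \neq j$ (since the endpoints $\ga_j = (z,s)$ and $(z,s') \notin \ga_0$ of $e_z$ preclude $e_z$ from emanating from any $\ga_{j'}$ with $j' \neq j$). Consequently, conditional on $\bigcap_j A_j$, the relative rank of $f_z$ and $e_z$ depends only on $A_j$, and a direct permutation count on the $|E_j|+1$ uniformly ordered edges $E_j \cup \{e_z\}$, conditioned on $f_z$ being smallest among the $|E_j|$ elements of $E_j$, yields $\mu(f_z \preceq e_z \mid A_j) = |E_j|/(|E_j|+1) \geq 1/2$. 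This gives the required per-$z$ lower bound, and the proof is complete modulo the canonical-witness construction.
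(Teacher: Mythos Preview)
Your overall strategy---show a per-$z$ conditional bound of $1/2$ via the greedy characterisation of the $\preceqlex$-minimal path, then apply Markov to $|U|-|V|$---is exactly the paper's approach, and your permutation count $|E_j|/(|E_j|+1)$ matches the paper's $(k+1)/(k+2)$.

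The gap you flag, however, is self-inflicted. The second sub-condition in the definition of a good almost-overlap point compares $(\ga_j,\ga_{j+1})$ not to $((z,s),(z,s'))$ but to $((z,s),(z,t))$, where $t$ is the \emph{first step} on the geodesic $[s,s']$ after $s$. So you should set $e_z := ((z,s),(z,t))$; this is always an edge of $\slab$, and there is no need for $s$ and $s'$ to be adjacent. With this correction your argument that $e_z\notin E_{j'}$ for $j'\neq j$ still holds (the tail $(z,t)$ is not on $\ga_0$, see below), and the permutation computation is unchanged.

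The first sub-condition (no $u$ strictly between $s$ and $s'$ with $(z,u)\in\ga_0$) is then handled without any ``canonical-witness construction'': among all witness triples $(z',s,s')$ for the almost-overlap point $z$, simply pick one minimising $d_S(s,s')$. If some intermediate $u$ on $[s,s']$ had $(z,u)\in\ga_0$, then $(z',u,s')$ would be a strictly closer valid witness (the three conditions involving $s'$ and $z'$ are unchanged, and $(z,u)\in\ga_0$ gives the condition on $s$), contradicting minimality. This closest-pair choice also guarantees $(z,t)\notin\ga_0$, so the edge $e_z$ and its reverse lie outside every $E_{j'}$, and your independence step is justified.

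With these two corrections (use $t$ rather than $s'$ in $e_z$; choose the closest witness pair rather than an adjacent one) the proof is complete and coincides with the paper's.
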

	
   \begin{remark}
	 	It is for the above lemma alone that the random ordering is necessary. 
		Indeed, for a fixed ordering, there is no guarantee that enough good almost-overlap points exists. 
	\end{remark}
	
	\begin{proof}
		Before we start the proof, let us mention that
		the set of almost-overlap points $U(\om, \preceq)$ only depends on 
		$\om$ and $\gamma(\om, \preceq)$, not otherwise on $\preceq$. 
		The set of good almost-overlap points does however depend further on $\preceq$. 
		
		Fix $\om_0 \in \calY$ and a path $\gamma_0$.
		Let $U_0$ be the set $U(\om_0, \preceq)$ for an ordering $\preceq$ such that $\ga(\om_0, \preceq) =  \gamma_0$. 
		(Such an ordering exists if the conditioning in~\eqref{eq:supnicehigh} is not degenerate.)
		We will prove that, for each $z \in U_0$, 
		\begin{align}\label{eq:oneptinV}
			\nu \big[ z \in  V(\om,\preceq) \: \big| \: \om = \om_0, \ga(\om, \preceq) = \gamma_0 \big] 
			= \mu \big[ z \in  V(\om_0,\preceq) \: \big| \: \ga(\om_0, \preceq) = \gamma_0 \big] 
			\geq \frac{1}{2}.
	    \end{align}
	    In other words, when averaging over the choice of the order $\preceq$, any almost-overlap points is good with probability at least $1/2$. 
		This implies~\eqref{eq:supnicehigh} through a direct application of Markov's inequality.
		
		Fix $z \in U_0$ as above. Let $z' \in \Lambda_1(z)$ and $s, s' \in S$ closest to each other, 
		as in the definition of almost-overlap point, i.e. with
		\begin{itemize*}
		\item $(z,s) \in \ga_i$,
		\item $(z,s') \notin \ga$,
		\item $(z',s')$ is connected to $\ovr A_0$ in $\ovr{D' \setminus \{z\}}$,
		\item $(z',s')$ is not connected to $\ovr B_1$ in $\ovr{D}$.
		\end{itemize*}
		Let $t\in S$ be the first point after $s$ when going from $s$ to $s'$ along $[s,s']$ and let $f = ((z,s),(z,t))$. 
		Let $e = (\ga_i, \ga_{i+1})$ 
		and $e_1, \dots, e_k$ be the oriented edges emanating from $\ga_i$, other than $e$, 
		such that there exists an $\om$-open path in $D \setminus \ga_{[0,i]}$ from $(z,t)$ to $B_2$ starting with $e_i$.
		
		Under $\mu ( . \; | \; \ga(\om_0, \preceq) = \gamma_0) $ the ordering of the oriented edges emanating from $\ga_i$
		is uniform among orderings such that $e \preceq e_i$ for $i = 1, \dots, k$. 
		With this in mind, we notice that:
		\begin{itemize*}	
		\item if $f \in \{e_1, \dots, e_k\}$, then $\mu ( e \preceq f \; | \; \ga(\om_0, \preceq) = \gamma_0) =1$.
		\item if $f \notin \{e_1, \dots, e_k\}$, then $\mu ( e \preceq f \; | \; \ga(\om_0, \preceq) = \gamma_0) = \frac{k+1}{k+2}$.
		\end{itemize*}
		Equation~\eqref{eq:oneptinV} follows from the above. 
	\end{proof}
	
	\begin{proof}[Lemma~\ref{lem:Ylarge}]
		Let $\calY'_{> \alpha} = \calY_{> \alpha} \cap \big\{ \vert V(\om, \preceq) \vert  > \alpha/4 \big\}$. 
		By Lemma~\ref{lem:supnicehigh}
		$\nu(\calY'_{> \alpha}) \geq \frac12 \nu(\calY_{> \alpha}) $, and we will focus on bounding $\nu(\calY'_{> \alpha})$.
		In order to do this we will define a map 
		$\Psi: \calY'_{>\alpha} \rightarrow 2^{\calX \times {\calO}}$ 
		and apply Lemma~\ref{lem:multivaluemap} to it. 
		
		Fix a real number $c' \in(0, 1/2)$ which we will identify later and let $j=\lceil c' \alpha \rceil$. 
		Consider a pair $(\om, \preceq) \in \calY'_{>\alpha}$.
		For $z_1, \dots, z_j \in  V(\om, \preceq)$, we define $\om_{z_1, \dots, z_j}$ as follows.
		
		By definition of $V$, for each $z_k$ there exists a pair of distinct points $s_k, s_k' \in S$ 
		and a point $z_k'$ such that
		\begin{enumerate}[label=(\alph*)]\itemsep=0pt
		\item\label{cond1} $(z_k,s_k) \in \ga$,
		\item\label{cond2} $(z_k',s_k')$ is connected to $\ovr A_0$ in $\ovr{D' \setminus \{z_k\}}$,
		\item\label{cond3} $\ga$ does not intersect $\{z_k\} \times (s_k, s_k']$, 
		\item\label{cond4} $\La_1(z_k) \times (s_k, s_k')$ is not connected to $\ovr A_0$ in $\ovr{D' \setminus \{z_k\}}$,
		\item\label{cond5} if $\ga_i = (z_k,s_k)$ and $t_k$ is the first point of $S$ when going from $s_k$ to $s'_k$ 
			along $[s_k,s'_k]$, then $(\ga_i, \ga_{i+1}) \preceq ((z_k,s),(z_k,t))$.
		\end{enumerate}
		Note that conditions~\ref{cond1},\ref{cond2},\ref{cond3} and~\ref{cond5} are exactly those of the definition of $V$. 
		Condition~\ref{cond4} may be assumed by taking $s_k'$ as close to $s_k$ as possible.

		\begin{figure}
			\begin{center}
				\includegraphics[width=1\textwidth]{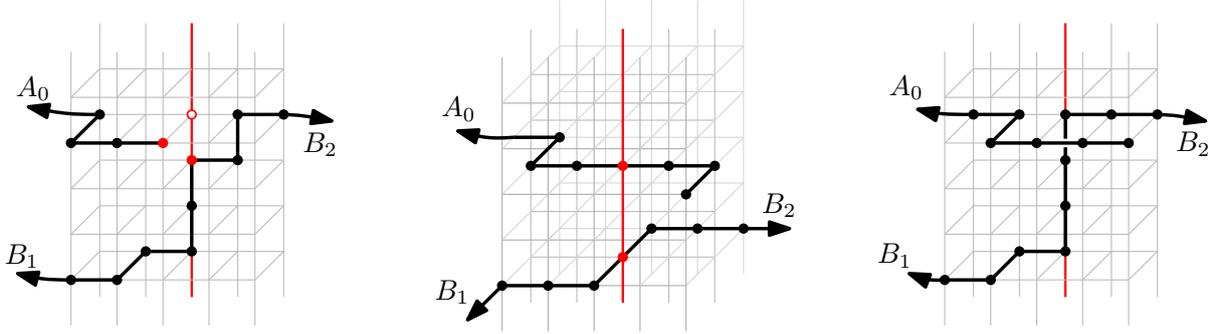}
			\end{center}
			\label{fig:overlappts}
			\caption{Left diagram: the marked vertical line corresponds to an almost-overlap point, but not an overlap point.
				The points $(z,s)$ and $(z',s')$ are marked in red and the point $(z,s')$ is marked by a circle. 
				Middle diagram: the red vertical line corresponds to an overlap point. 
				Right diagram: the red line does not correspond to an almost-overlap point, 
				since the two paths come close to each other at the same horizontal level. }
		\end{figure}
		    
		We choose points $s_k, s'_k$ and $t_k$ as above, following some deterministic ordering when several choices are possible. 
		Then $\om_{z_1, \dots, z_j}$ is identical to $\om$ except for the following edges, for each $k$:
		\begin{itemize*}
			\item all edges of $\{z_k\} \times [s_k, s_k']$ are open,
			\item if $z_{k} \neq z_k'$, then $((z_k, s_k'), (z_k',s_k'))$ is open,
			\item all edges of the form $((z_k,t),(z',t))$ with $t \in (s_k, s_k')$ are closed. 
		\end{itemize*}
		We say we obtain $\om_{z_1, \dots, z_j}$ from $\om$ by performing a connecting surgery at each point $z_k$. 
		Observe that, for any choice of $z_1, \dots, z_j \in V(\om,\preceq)$,  
		$\om_{z_1, \dots, z_j} \in \calX$.
		Indeed the connecting surgery does not close the path $\ga(\om, \preceq)$, 
		so $\om_{z_1, \dots, z_j} \in \calB$, 
		and in addition any one connecting surgery ensures that $\ga(\om, \preceq) \xlra{D'} \ovr{A_0}$.
		We set $\Psi(\om, \preceq) = \big\{(\om_{z_1,\dots, z_j}, \preceq): z_1,\dots, z_j \in V(\om,\prec) \big\}$.
		
		Let us now verify the conditions of Lemma~\ref{lem:multivaluemap}. 
		The first condition of the lemma is satisfied by definition.
		Since $|V(\om, \preceq)|  > \alpha/4$ for all $(\om, \prec) \in \calY'_{> \alpha}$,
		$|\Psi(\om, \prec)| > \binom{\alpha/4}{j}$.
		Thus the second condition is satisfied with $t = \binom{\alpha/4}{j}$
		
		Let us now study the third condition.
		The first thing to notice is that, for $(\om, \preceq)$ as above and $z_1, \ldots, z_j \in V(\om, \preceq)$,
		we have $\ga(\om, \preceq) = \ga(\om_{z_1,\ldots, z_j}, \preceq)$. 
		The proof of this fact follows the same line as the corresponding step in the proof of Lemma~\ref{lem:gluing}\ref{lem:gluing1}.
		Let us only mention that the connection surgery used here is such that $\ga(\om, \preceq)$ is open in $\om_{z_1,\ldots, z_j}$.
		Moreover, $z_1, \ldots, z_j$ were chosen as \emph{good} almost-overlap points, 
		so that the path $\ga(\om, \preceq)$ is the minimal continuation of a crossing from $B_1$ to $B_2$ at every point $z_k$. 
		Indeed, if $\ga(\om, \preceq)_j = (z_k,s_k)$, then there are only three $\om_{z_1,\ldots, z_j}$-open edges emanating from 
		$(z_k,s_k)$ and $(\ga(\om, \preceq)_j,\ga(\om, \preceq)_{j+1})$ 
		is preferable to the first edge in the link between $(z_k,s_k)$ and $(z_k',s_k')$.

		Fix now $(\si, \preceq) = (\om_{z_1,\ldots, z_j}, \preceq) \in \Psi(\om)$ 
		for some $(\om,\preceq) \in \calY_{>\alpha}$ and $z_1, \ldots, z_j \in V(\om, \preceq)$.
		Then $((z_1, s_1), \dots, (z_j, s_j))$ are the only points $(z, s)$ on $\ga(\si, \preceq)$ that are connected to 
		$\ovr{A_0}$ by a $\si$-open path only intersecting $\ga(\si, \preceq)$ at $(z,s)$. 
		Thus, $\om$ and $\si$ only differ on the set $S(\si) = \cup_{k =1}^j \Lambda_1(z_k)$.
		We insist that, since $(z_1, s_1), \dots, (z_j, s_j)$ are determined by $\si$, so is the set $S(\si)$.
		Thus the third condition of Lemma~\ref{lem:multivaluemap} is satisfied, with $s = j {|\Lambda_1|}$.
		The lemma then implies
		\begin{align*}
			\nu \left(\calY'_{>\alpha}\right) 
			\leq \frac{1}{{\alpha/4 \choose  j}}\left(\frac{2q}{\min\{p,1-p\}} \right)^{j {|\Lambda_1|}} 
			\cdot \nu\left( \calX \times \calO \right).
		\end{align*} 
		Let $Q= (\frac{2q}{\min\{p,1-p\}})^{{|\Lambda_1|}}$. 
		Note that $Q$ is a constant depending on $p, \Lat$ and $S$ only 
		and recall that $j$ was chosen as $j=\lceil c' \alpha \rceil$. 
		Since $\nu( \calX \times \calO)\leq 1$, using Stirling's formula we obtain
		\begin{equation} \label{eq:highoverlapglue}
			\nu \left(\calY'_{>\alpha} \right)  
			\leq \frac{Q^{j}}{{\alpha/4 \choose  \lceil c' \alpha \rceil}}
			\leq \big[Q \cdot (2c')^{c'} \big]^{c'\alpha} \cdot (1-2c')^{(\frac12 - c')\alpha},
		\end{equation} 
		for $\alpha$ large enough. 
		By choosing $c' \in(0,1/2)$ such that $(2c')^{c'} \leq Q^{-1}$ and setting $c_1 = (1-2c')^{(\frac12 - c')} \in (0,1)$, 
		we deduce
		$$ \nu \left(\calY'_{>\alpha} \right) \leq c_1^\alpha,$$
		for $\alpha$ large enough.
		In order to obtain the conclusion of Lemma~\ref{lem:Ylarge}, 
		recall that $\alpha =  -c  \log (\phi_p(\calA^c))$ for some constant $c$ depending on $p, \Lat$ and $S$ only, 
		and that $\alpha$ may be considered large since we restrict ourselves to small values of $\phi_p(\calA^c)$. 
	\end{proof}
	
	Let us now conclude the proof of Proposition~\ref{lem:gluing}\ref{lem:gluing2}.
	Note that the sought bound is only relevant when $\phi_p(\calA^c)$ is small.
	We will therefore prove the bound assuming $\phi_p(\calA^c)$ is small enough for Lemma~\ref{lem:Ylarge} to hold. 
	The result may be extended to any value of $\phi_p(\calA^c)$, with a possibly altered constant $\beta$. 
	
	Recall that $\calY \times \calO = \calY_{\leq \alpha} \cup  \calY^{(1)}_{> \alpha} \cup \calY^{(2)}_{> \alpha}$.
	Lemmas~\ref{lem:Ysmall} and~\ref{lem:Ylarge} bound the $\nu$-probability of the three events on the right hand side;
	we can combine them to obtain: 
	\begin{align*}
		\phi_p(\calY) = \nu(\calY) 
		\leq \nu(\calY_{\leq \alpha})
			 + \nu(\calY_{> \alpha}^{(1)})  + \nu(\calY_{> \alpha}^{(2)})
		\leq 3 \phi_p(\calA^c)^{\max\{\beta, 1/2\}}.
	\end{align*}
	The above yields~\eqref{eq:gluing2} through basic algebra. 
\end{proof}

\section{Bounds for crossing probabilities}  \label{sec:proof_RSW2}

As mentioned in the introduction, the  first step in the argument of~\cite{DumMan14} applies to non-planar graphs.
We state it here without proof:

\begin{proposition}[\cite{DumMan14} Prop 3.1]
	For $p>\tilde{p_c}$, 
	\begin{align*}
	\liminf_{n\rightarrow \infty}\phi_{p}(\calC_v(2n,n))>0.
	\end{align*}
\end{proposition}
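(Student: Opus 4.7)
The plan is to argue by contradiction. Suppose there exists $p^\star > \tilde p_c$ with $\liminf_n \phi_{p^\star}(\calC_v(2n, n)) = 0$. I will show that this forces exponential decay of $\phi_{p_0}(0 \lra \pd \Lambda_n)$ in $n$ for some $p_0 \in (\tilde p_c, p^\star)$, contradicting the very definition of $\tilde p_c$.

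Extract a subsequence $(n_k)$ along which $\phi_{p^\star}(A_{n_k}) \to 0$, where I write $A_n = \calC_v(2n, n)$. Fix three levels $\tilde p_c < p_0 < p_1 < p^\star$. Since $A_{n_k}$ is increasing and finite-dimensional, Lemma~\ref{lem:bnddistoevent} applied at the pair $(p_1, p^\star)$ yields $\phi_{p_1}(H_{A_{n_k}} \leq K) \leq C(p_1, p^\star)^K\,\phi_{p^\star}(A_{n_k})$ for every $K \geq 0$. Choosing $K = K_k$ of order $\tfrac12 \log(1/\phi_{p^\star}(A_{n_k}))/\log C$ makes the right-hand side tend to $0$, hence $\phi_{p_1}(H_{A_{n_k}}) \geq c \log\bigl(1/\phi_{p^\star}(A_{n_k})\bigr)$ for some absolute $c>0$ and all $k$ large enough. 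Inserting this estimate into the integrated differential inequality~\eqref{eq:hamming_inf} between $p_0$ and $p_1$, and using the monotonicity $\phi_{p_1}(A_{n_k}) \leq \phi_{p^\star}(A_{n_k})$, produces the amplification
\[
\phi_{p_0}(A_{n_k}) \,\leq\, \phi_{p_1}(A_{n_k}) \exp\bigl[-4(p_1-p_0)\phi_{p_1}(H_{A_{n_k}})\bigr] \,\leq\, \phi_{p^\star}(A_{n_k})^{1 + 4c(p_1-p_0)}.
\]
Iterating the same scheme with nested parameters $\tilde p_c < p' < p_0' < p_0$, one sees that $\phi_{p'}(A_{n_k}) \to 0$ arbitrarily fast along $(n_k)$, for any $p' > \tilde p_c$.

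The delicate final step, which is the main obstacle, is to convert this subsequential smallness of vertical crossing probabilities at $p'$ into genuine \emph{exponential in $n$} decay of the one-arm probability $\phi_{p'}(0 \lra \pd \Lambda_n)$. This is done through a renormalization argument: pick a scale $N = n_k$ at which $\phi_{p'}(\calC_v(2N, N)) \leq \eps$ for a sufficiently small threshold $\eps$, tile $\slab$ by translates of $2N \times N$ rectangles, and observe via an FKG/finite-energy Peierls-type count that the existence of an open cluster of radius $\geq mN$ forces at least of order $m$ disjoint easy-direction crossings at scale $N$. Summing over the combinatorial choices gives $\phi_{p'}(0 \lra \pd \Lambda_{mN}) \leq (C\eps)^{cm}$, which is exponential in $n = mN$ and therefore contradicts $p' > \tilde p_c$. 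Note that, as indicated in the text, this part of the argument is not specific to planarity, so the Russo-type sprinkling encapsulated in Lemmas~\ref{lem:bnddistoevent} and~\eqref{eq:hamming_inf}, together with the monotonicity of the infinite-volume measures, is enough to carry out the renormalization on $\slab$ without invoking the gluing Lemma of Section~\ref{sec:glue}.
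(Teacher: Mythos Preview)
The paper does not prove this proposition; it imports it from \cite{DumMan14}, noting only that the argument there ``applies to non-planar graphs''. So there is no in-paper proof to compare against, and your overall architecture --- amplify the smallness of easy-crossing probabilities along a subsequence via the Hamming-distance inequalities, then derive exponential decay at some $p'\in(\tilde p_c,p^\star)$ --- is indeed the one used in \cite{DumMan14}.

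Your amplification step is correct, with one caveat: ``arbitrarily fast'' overstates what the iteration gives. Each application of \eqref{eq:hamming_inf} and Lemma~\ref{lem:bnddistoevent} multiplies the exponent by a factor $1+O(p_j-p_{j+1})$, and since the total budget $p^\star-\tilde p_c$ is finite you obtain only a fixed power $\rho>1$, not an arbitrary one. Fortunately a fixed $\rho>1$ is enough once the final step is done correctly.

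The genuine gap is in your last paragraph. For Bernoulli percolation the bound $\phi_{p'}(0\lra\pd\Lambda_{mN})\leq(C\eps)^{cm}$ does follow from a Peierls/BK count: the $\gtrsim m$ disjoint easy crossings forced by a long cluster decouple under the BK inequality. For the random-cluster model with $q>1$ there is no BK inequality, and neither FKG (which lower-bounds increasing events --- the wrong direction here) nor finite energy (a purely local estimate) substitutes for it. So the passage from ``$m$ disjoint crossings must occur'' to ``probability $\leq\eps^{cm}$'' is not justified as written.

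The way \cite{DumMan14} circumvents this is to avoid any product bound and instead feed the one-arm event $\{0\lra\pd\Lambda_n\}$ itself into \eqref{eq:hamming_inf}. One lower-bounds $\phi_{p_1}(H_{\{0\lra\pd\Lambda_n\}})$ by the expected number of width-$N$ annuli (or strips) around the origin that carry no radial open crossing: any path from $0$ to $\pd\Lambda_n$ must traverse each such annulus, contributing at least one closed edge per blocked annulus, and the annuli are edge-disjoint. The probability that a given annulus is blocked is then controlled, via a plain union bound, by $\phi_{p_1}(\calC_v(2N,N))$ along the subsequence. Only \emph{expectations} enter, so linearity of expectation replaces the missing independence; one obtains $\phi_{p_1}(H_{\{0\lra\pd\Lambda_n\}})\geq cn$ over the relevant range of $n$ and hence exponential decay after a single sprinkle to $p_0<p_1$.
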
 

The object of this section is the following result, which corresponds to the second step in~\cite{DumMan14}. 
It may be understood as a Russo-Seymour-Welsh type result, with the remark that it requires increasing the value of the edge-weight.  

\begin{proposition}\label{prop:RSW}
	If $p\in(0,1)$ is such that 
	\begin{align}\label{eq:RSW}
		\liminf_{n\rightarrow \infty}\phi_{p}(\calC_v(2n,n))>0,
	\end{align}
	then for any $p'>p$,
	\begin{align}
		\liminf_{n\rightarrow \infty}\phi_{p'}(\calC_h(2n,n))>0.
	\end{align}
\end{proposition}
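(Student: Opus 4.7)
The strategy is to adapt the corresponding step in~\cite{DumMan14}, replacing planar Jordan-curve arguments by the gluing lemma of Section~\ref{sec:glue}. The passage from $p$ to $p' > p$ provides flexibility to absorb the multiplicative constants incurred by each application of Lemma~\ref{lem:gluing}\ref{lem:gluing1}, through the differential inequality~\eqref{eq:hamming_inf} if needed.

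The first step is to extract lower bounds on crossing probabilities of $n \times n$ squares. By the $\pi/2$-rotation invariance of $\Lat$, the hypothesis~\eqref{eq:RSW} is equivalent to $\liminf_n \phi_p(\calC_h(n,2n)) > 0$. I would combine two suitably translated easy-direction crossings of $n \times 2n$ rectangles by FKG; their projections to $\Lat$ overlap in a planar square, inside which a crossing in the orthogonal direction — also bounded below by symmetry — supplies the connector required by the topological condition~\eqref{eq:topo_cond}. A single application of Lemma~\ref{lem:gluing}\ref{lem:gluing1} then yields $\phi_p(\calC_h(n,n)),\,\phi_p(\calC_v(n,n)) \geq c > 0$, uniformly in $n$.

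The second step upgrades these to hard-direction crossings of $2n \times n$ rectangles. Cover $[0,2n]\times[0,n]$ by three overlapping $n \times n$ squares $Q_1,Q_2,Q_3$ whose pairwise overlaps are planar rectangles of width at least $n/2$. In each $Q_i$ we have a horizontal crossing with probability $\geq c$, and in each overlap $Q_i \cap Q_{i+1}$ a vertical crossing with probability $\geq c$. Inside each planar overlap, a horizontal and a vertical crossing must, after projection to $\Lat$, share a vertex by planarity, verifying~\eqref{eq:topo_cond}. Iterating Lemma~\ref{lem:gluing}\ref{lem:gluing1} together with FKG assembles the pieces into an open horizontal crossing of the full $2n \times n$ rectangle, with probability at least some $c_0 = c_0(p,\Lat,S) > 0$. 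Finally, monotonicity of the random-cluster measure in $p$ — or, should any $n$-dependent loss accumulate in the iteration, an application of~\eqref{eq:hamming_inf} to the increasing event $\calC_h(2n,n)$ together with~\eqref{eq:chngphambnd} — transfers this bound from $\phi_p$ to $\phi_{p'}$.

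The main obstacle is the careful bookkeeping of the gluing lemma at each invocation, namely the choice of regions $D \subset D'$ and of the sets $A_0, A_1, A_2, B_1, B_2$ so that the topological condition~\eqref{eq:topo_cond} reduces to a Jordan-curve statement inside a genuinely planar subregion of $\Lat$, while the conclusion event $\calX$ coincides with the connection that the current step requires. The underlying planar combinatorics is elementary, but the iteration must be arranged so that only finitely many gluings take place — otherwise the constants coming from Lemma~\ref{lem:gluing}\ref{lem:gluing1} would degrade with $n$ and the direct lower bound would be lost.
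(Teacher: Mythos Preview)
Your approach has a genuine gap: both steps are circular. Consider Step~2. To glue a horizontal crossing of $Q_1=[0,n]\times[0,n]$ to one of $Q_2=[n/2,3n/2]\times[0,n]$ via Lemma~\ref{lem:gluing}\ref{lem:gluing1}, the topological condition~\eqref{eq:topo_cond} must force the two crossings to meet in projection. They need not: one can run near the bottom of $Q_1$, the other near the top of $Q_2$. The connector you propose --- a vertical crossing of the overlap $[n/2,n]\times[0,n]$ --- is a crossing of a rectangle of aspect ratio $2$ in the \emph{hard} direction, which is exactly the object whose existence you are trying to establish. The same circularity infects Step~1: the ``crossing in the orthogonal direction'' in the overlap square is a square crossing, which is not implied by the hypothesis (on slabs there is no duality giving $\phi_p(\calC_v(n,n))\ge 1/2$, and $\phi_p(\calC_v(2n,n))>\delta$ says nothing about $\phi_p(\calC_v(n,n))$, since widening the box only makes vertical crossing easier). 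No finite amount of gluing-plus-FKG, nor any appeal to~\eqref{eq:hamming_inf} or~\eqref{eq:chngphambnd}, removes this: at every stage the connector you need is at least as hard as the target.

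This is precisely the RSW difficulty, and the paper's proof (following~\cite{DumMan14}) does \emph{not} proceed by direct gluing. Instead it proves Lemma~\ref{lem:manycrossor}: if $\phi_{p_3}(\calC_h(2n,n))$ is very small, then under $\phi_{p_2}$ (for $p_2<p_3$) the rectangle $[0,2n]\times[0,n/2]$ typically contains $2^I$ \emph{strongly separated} vertical crossings, forcing $\phi_{p_2}(H_{\calC_h(2n,n/2)})$ to be large. The argument runs through a chain of claims (Claims~\ref{claim:combinecross}--\ref{onemakestwostronglydisjoint}) showing that, when $\alpha:=\sup_k\phi_{p_3}(\calC_h((2+v)k,2k))$ is small, any vertical crossing of a strip must, except on an event of small probability, split into two strongly separated crossings of a slightly thinner strip; iterating $I$ times gives $2^I$ such crossings. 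The passage from ``separated'' to ``strongly separated'' (Claim~\ref{onemakestwostronglydisjoint}) is the slab-specific step and uses Lemma~\ref{lem:bnddistoevent} with the drop from $p_3$ to $p_2$. The conclusion is then fed into~\eqref{eq:hamming_inf} (this is the step deferred to~\cite[Proof of Prop.~4.1]{DumMan14}) to derive a contradiction with the assumed smallness of hard crossings at $p'>p$. In short, the role of the parameter increase $p\to p'$ is not to ``absorb gluing constants'' as you suggest, but to power the Hamming-distance differential inequality that drives the whole argument.
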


An immediate consequence of the two above statements is the main result of this section:

\begin{corollary}\label{cor:hardcrossbound}
	For $p>\tilde{p_c}$, 
	\begin{align*}
	\liminf_{n\rightarrow \infty}\phi_{p}(\calC_h(2n,n))>0.
	\end{align*}
\end{corollary}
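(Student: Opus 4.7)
The plan is to deduce Corollary~\ref{cor:hardcrossbound} by chaining the two preceding propositions of this section. Fix $p > \tilde{p_c}$. The subtlety to observe up front is that Proposition~\ref{prop:RSW} takes as input a parameter at which vertical (easy-direction) crossings have uniformly positive probability, and outputs uniformly positive horizontal (hard-direction) crossings at a \emph{strictly larger} parameter. I therefore cannot feed $p$ itself into Proposition~\ref{prop:RSW}; instead I will pass through an intermediate value.

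Since $\tilde{p_c} < p$, I pick any $p_0 \in (\tilde{p_c}, p)$. The first proposition of the section (the easy-direction crossing statement from~\cite{DumMan14}), applied at $p_0 > \tilde{p_c}$, yields
$$\liminf_{n \to \infty} \phi_{p_0}(\calC_v(2n, n)) > 0.$$
This is precisely hypothesis~\eqref{eq:RSW} at the parameter $p_0$. Applying Proposition~\ref{prop:RSW} with the pair $p_0 < p$ then gives
$$\liminf_{n \to \infty} \phi_{p}(\calC_h(2n, n)) > 0,$$
which is the conclusion of the corollary.

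In summary, there is essentially no obstacle here: the corollary is a one-line consequence of the two propositions of this section, the only point being to interpolate a parameter strictly between $\tilde{p_c}$ and $p$, which is always possible thanks to the strict inequality $\tilde{p_c} < p$. All the genuine content lies in Proposition~\ref{prop:RSW} itself, whose proof (relying crucially on the gluing lemma of Section~\ref{sec:glue}) carries the real weight of this section.
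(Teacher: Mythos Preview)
Your argument is correct and matches the paper's own treatment: the corollary is stated there as an ``immediate consequence'' of the two preceding propositions, and the only thing to do is exactly the interpolation of an intermediate parameter $p_0\in(\tilde p_c,p)$ that you carry out.
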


Let us now focus on the proof of Proposition~\ref{prop:RSW},
the core of which lies in the following lemma. 

\begin{lemma}\label{lem:manycrossor}
	Let $ 0 < p_1 < p_2 < p_3 < 1$, and suppose that 
	\begin{align*}
		\inf \big\{\phi_{p_1}(\calC_v(2n,n)) : n \in \bbN \big\}> \delta > 0.
	\end{align*}
	There exist constants $c_0, c_1 >0$, depending only on $p_1, p_2$ and $p_3$, such that
	if $n,I \in \bbN$ are such that $1 \leq I \leq n/400$ and 
	\begin{align*}
		I^2 { \big[\phi_{p_{3}}(\calC_h(2n,n))\big]^{c_1 / I}} \leq c_0\delta,
	\end{align*}
	then 
	\begin{align}\label{eq:bndhamhor}
		\phi_{p_2}(H_{\calC_h(2n,n/2)}) \geq \frac{2^I-1}{2} \delta.
	\end{align}
\end{lemma}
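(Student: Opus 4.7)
The plan is to prove Lemma~\ref{lem:manycrossor} by a doubling cascade of length $I$, combining the hypothesis $\phi_{p_1}(\calC_v(2n,n)) > \delta$ (which by stochastic domination persists at $p_2$) with the Hamming--distance bound of Lemma~\ref{lem:bnddistoevent} applied to $A = \calC_h(2n,n/2)$ with the parameters $p = p_2$, $p' = p_3$. That bound reads
\begin{equation*}
	\phi_{p_2}\big(H_{\calC_h(2n,n/2)} \leq k\big) \;\leq\; C^{k}\, \phi_{p_3}\big(\calC_h(2n,n/2)\big) \;\leq\; C^{k}\, \phi_{p_3}\big(\calC_h(2n,n)\big),
\end{equation*}
with $C = C(p_2,p_3,q)$, and the assumption $I^2[\phi_{p_3}(\calC_h(2n,n))]^{c_1/I} \leq c_0\delta$ is calibrated precisely so that this tail probability stays small for values of $k$ up to something proportional to $I$.

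The core of the argument is an induction on $i \in \{0,1,\dots,I\}$ producing an event $\calE_i$ with $\phi_{p_2}(\calE_i) \geq \tfrac12$ on which $H_{\calC_h(2n,n/2)} \geq \tfrac{2^i-1}{2}\,\delta$. The case $i=0$ is trivial. For the inductive step I would split the rectangle $[0,2n]\times[0,n/2]$ into two overlapping horizontal halves and use the abundance of vertical crossings under $\phi_{p_2}$ to produce in each half a vertical ``barrier'' which any horizontal crossing must traverse. Applying the inductive hypothesis to each half (which still satisfies the required aspect ratios thanks to the constraint $I\leq n/400$, which ensures enough dyadic room at every one of the $I$ stages) and using that a horizontal crossing of the entire rectangle restricts to a horizontal traversal of each half, the respective Hamming distances add, which is what produces the doubling $2^{i-1} \mapsto 2^i$. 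The two barriers are fused into a single event through the Gluing Lemma (Lemma~\ref{lem:gluing}), and the lower bound on $\phi_{p_2}(\calE_i)$ is propagated through the cascade by a combination of FKG and the quantitative part~\ref{lem:gluing2} of Lemma~\ref{lem:gluing}; the losses incurred in the $O(I)$ applications of gluing are precisely what generate the prefactor $I^2$ and the exponent $c_1/I$ in the hypothesis. Taking $i = I$ then gives $\phi_{p_2}(H_{\calC_h(2n,n/2)}) \geq \tfrac12 \cdot \tfrac{2^I-1}{2}\delta$, which yields the conclusion up to a harmless constant that can be absorbed into $c_0$.

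The hard part will be the inductive doubling step in the slab setting. In the planar analogue of this lemma, treated in~\cite{DumMan14}, two crossings that meet geometrically must intersect by Jordan's theorem and can therefore be deterministically fused into a longer open path; on $\slab$ two paths may overlap without intersecting, and the only available tool is Lemma~\ref{lem:gluing}, which connects them only probabilistically and at the cost of an error term of the form $(1-\phi_p(\calA))^\beta$. Controlling these accumulated errors so that the doubling $2^i \to 2^{i+1}$ survives at every scale, and translating the resulting crossing information faithfully into a lower bound on the expected Hamming distance (rather than merely on a probability), is the technically delicate bookkeeping that shapes the precise form of the hypothesis.
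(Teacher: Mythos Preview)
Your proposal does not match the paper's argument, and more importantly the mechanism you describe would not produce the lemma.

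\medskip

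\textbf{The actual mechanism.} The paper's proof is not a dyadic splitting of the rectangle into left and right halves. It is a \emph{vertical} cascade through $I$ nested strips $\bfS(k_0)\supset\bfS(k_1)\supset\dots\supset\bfS(k_I)$, each slightly thinner than the previous. The key geometric fact (Claims~1--4) is that when the hard-direction crossing probability $\alpha$ at parameter $p_3$ is small, any vertical crossing of $\bfS(k_i)$ is forced to wander so much horizontally that it contains two \emph{separated} vertical crossings of $\bfS(k_{i+1})$. Iterating $I$ times turns one vertical crossing into $2^I$ separated ones. The hypothesis on $\phi_{p_3}(\calC_h(2n,n))$ enters through Claim~0, which bounds $\alpha$ by a power of that quantity; this is what makes the ``bad'' events in Claims~1--4 rare enough for the union bound over the $I$ scales to cost at most $\delta/2$. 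This is the source of the factor $I^2$ and the exponent $c_1/I$.

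\medskip

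\textbf{What goes wrong in your scheme.} First, your opening use of Lemma~\ref{lem:bnddistoevent} with $A=\calC_h(2n,n/2)$ gives $\phi_{p_2}(H\le k)\le C^k\phi_{p_3}(\calC_h(2n,n))$, which under the hypothesis yields only $E[H]\gtrsim I$, not the required $E[H]\gtrsim 2^I\delta$; pushing $k$ up to $2^I$ would need $\phi_{p_3}(\calC_h)$ doubly exponentially small in $I$, which is not assumed. Second, your inductive step (``apply the inductive hypothesis to each half'') is circular: the hypothesis is about the full rectangle, and you have no control on the analogous quantity for the half-rectangles at the smaller scale. Splitting horizontally $I$ times would also produce $2^I$ pieces of width $2n/2^I$, whereas $I\le n/400$ only guarantees linear room. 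Third, the quantitative gluing Lemma~\ref{lem:gluing}\ref{lem:gluing2} plays no role here; only part~\ref{lem:gluing1} is used, inside Claims~0--3, to compare crossing probabilities. Finally, the genuinely slab-specific step you do not mention is Claim~5: one must upgrade ``separated'' to ``strongly separated'' (since on a slab two crossings may overlap without intersecting, and only strongly separated crossings force closed edges on every horizontal path). This upgrade is where Lemma~\ref{lem:bnddistoevent} is actually used, and it is the reason a third parameter $p_2<p_3$ is needed. The final bound then follows from: with $\phi_{p_2}$-probability at least $\delta/2$ there are $2^I$ strongly separated vertical crossings of $\bfS(k_I)$, hence $H_{\calC_h(2n,n/2)}\ge 2^I-1$ on that event.
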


In~\cite{DumMan14} it was shown that a similar statement implies Proposition~\ref{prop:RSW} (with slightly different formulations). 
This step adapts readily to the present context, and we do not give more details here; 
the interested reader is referred to~\cite[Proof of Prop.~4.1]{DumMan14}.
The rest of the section is dedicated to proving Lemma~\ref{lem:manycrossor}. 
We start with some notation. 

Let $A_1, \dots, A_K$ be subsets of vertices of some rectangle $R$ of $\slab$.
For a configuration $\om \in \{0,1\}^\slab$, 
we say that the subsets are \emph{separated} in $R$
if $A_i \xlra{\om, R} A_j$ fails for all $1\leq i<j\leq K$. 
That is, if they are contained in distinct clusters of the configuration $\om$ \emph{restricted to }$R$.
We say that the subsets $A_1,\dots , A_K$ are \emph{strongly separated} in $R$ 
if $\ovr{A_1}, \dots, \ovr{A_K}$ are separated in $R$.
Here we have abusively used the notation $\ovr{A_i}$ for the set 
$$\ovr{A_i} = \{(u,v) \in V_\slab : \exists v' \in V_S \text{ such that } (u,v') \in A_i\}.$$
In other words, the sets are strongly separated if there is no open path in the rectangle 
whose projection on $\Lat$ crosses the projection of two distinct sets. 

It is easy to check that, if $\om$ is a configuration containing $K$ strongly separated vertical crossings of some rectangle $R$, 
then $$H_{\calC_h( R )} (\omega) \geq K-1.$$ 
Indeed, let $A_1, \dots, A_K$ be vertical crossings of $R$, strongly separated in $R$ in the configuration $\om$. 
In particular $\ovr{A_1}, \dots, \ovr{A_K}$ are disjoint connected sets crossing $R$ vertically. 
Hence we may order them from left to right -- we will assume this is already the case. 
Fix a self-avoiding path $\ga$ contained in $R$ and crossing it horizontally; orient it from left to right. 
It intersects each set $\ovr{A_i}$ at least once. 
But for each $i$, since $A_i$ and $A_{i+1}$ are strongly separated in $R$, 
$\ga$ must contain at least one $\om$-closed edge 
between any point of intersection with $\ovr{A_i}$ and the first following intersection with $\ovr{A_{i+1}}$.
This implies that $\ga$ contains at least one closed edge in the region between $\ovr{A_i}$ and $\ovr{A_{i+1}}$ for every $i$, 
hence at least $K-1$ closed edges overall. This implies the desired bound. 

\begin{proof}[Lemma~\ref{lem:manycrossor}]
	Fix $n$ and $I$ satisfying the assumptions of the lemma and set $v = \frac{1}{100 I}$ 
	(we will specify the values of the constants $c_0$ and $c_1$ later in the proof; 
	it will be apparent that they do not depend on $n$ and $I$). 
	
	In light of the above observation, to prove Lemma~\ref{lem:manycrossor} 
	we aim to show the existence of $2^I$ strongly separated vertical crossings of $\ovr{[0,2n] \times [0,n/2]}$.
	The proof follows the lines of~\cite[Lemma 4.3]{DumMan14} 
	with the essential difference that the crossings need to be strongly separated rather than simply separated. 
	
	We start of with series of claims for $\phi_{p_{3}}$, similar to those in the proof of~\cite[Lemma 4.3]{DumMan14}. 
	In the present context, the proof of these claims will require the gluing lemma \ref{lem:gluing}\ref{lem:gluing1}. 
	Once the claims established, we use them to show that, with positive $\phi_{p_3}$-probability, 
	there exist $2^I$ separated crossings of  $\ovr{[0,2n] \times [0,n/2]}$. 
	Finally, we deduce that there exist $2^I$ strongly separated crossing 
	with positive $\phi_{p_2}$-probability, using Lemma~\ref{lem:bnddistoevent}.
	
	In what follows, the constant $c > 0$ is that of~\eqref{eq:gluing1};
	it only depends on $p_3$ and $S$.
	Define
	\begin{align}\label{eq:max}
		\alpha = \sup \Big\{ \phi_{p_3}\big( \calC_h(\lceil(2+v)k \rceil, 2k) \big) : k\in[\tfrac n8,\tfrac n2]\Big\}.
	\end{align}
	
	\setcounter{claim}{-1}
	\begin{claim}\label{claim:combinecross}
		For $\alpha$ defined as above, we have
		\begin{equation}\label{eq:bound alpha}
			\alpha 
			\le \frac{1}{c}  \Big[\phi_{p_3}(\calC_h(2n,n)) \Big]^{v/28} 
			\le\frac{1}{c} \Big[\phi_{p_3}(\calC_h(2n,n)) \Big]^{2c_1/I},
		\end{equation} 
		where and $c_1 = 1/5600$ 
		(this is constant $ c_1$ that appears in Lemma \ref{lem:manycrossor}). 
	\end{claim}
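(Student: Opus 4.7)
The plan is to construct a horizontal crossing of $[0,2n] \times [0,n]$ by gluing together a chain of $N = O(1/v)$ shifted horizontal crossings of rectangles of the form $\lceil(2+v)k\rceil \times 2k$, each contributing a factor of $\alpha$, via iterated application of Lemma~\ref{lem:gluing}\ref{lem:gluing1}.

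First, pick $k^* \in [n/8, n/2] \cap \bbN$ essentially attaining the supremum defining $\alpha$, so that $\phi_{p_3}(\calC_h(\lceil(2+v)k^*\rceil, 2k^*)) \geq \alpha$. Since $2k^* \leq n$, the inclusion $\calC_h(2n, 2k^*) \subseteq \calC_h(2n, n)$ reduces the problem to bounding $\phi_{p_3}(\calC_h(2n, 2k^*))$ from below. Arrange $N$ translates $R_1, \dots, R_N$ of $[0,\lceil(2+v)k^*\rceil] \times [0, 2k^*]$ so that each consecutive pair $R_j, R_{j+1}$ overlaps in a square of side $2k^*$, and $\bigcup_j R_j = [0, 2n] \times [0, 2k^*]$. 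Since each new rectangle adds approximately $vk^*$ of fresh horizontal extent, the worst case $k^* = n/8$ requires $N \leq 14/v + O(1)$.

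Glue the crossings inductively using Lemma~\ref{lem:gluing}\ref{lem:gluing1} applied in each square overlap $O_j = R_j \cap R_{j+1}$: in the planar graph $\Lat$, any horizontal and vertical crossings of $O_j$ necessarily intersect, satisfying the topological condition. At step $j$, the event $\calA$ captures the horizontal crossing of $R_1 \cup \cdots \cup R_j$ (inductively built) reaching $O_j$ through the existing cluster, while $\calB$ encodes a horizontal crossing of $R_{j+1}$ together with a vertical crossing of $\ovr{O_j}$ that connects the two. The vertical crossing of $\ovr{O_j}$, a square of side $2k^* \leq n$, has probability bounded below by a fixed constant $\delta' > 0$, thanks to the hypothesis $\phi_{p_1}(\calC_v(2n,n)) > \delta$ combined with FKG monotonicity in $p$ (using $p_1 \leq p_3$) and restriction to sub-rectangles. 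After $N - 1$ iterations we obtain
\begin{equation*}
\phi_{p_3}(\calC_h(2n, n)) \;\geq\; (c \alpha \delta')^N,
\end{equation*}
with $c, \delta'$ depending only on $p_1, p_3, \Lat$ and $S$. Taking the $N$-th root and using $N \leq 14/v$ together with the slack in the exponent $v/28 < v/14$, we absorb $\delta'$ into the constant, yielding $\alpha \leq \tfrac{1}{c}[\phi_{p_3}(\calC_h(2n,n))]^{v/28}$. The second inequality in~\eqref{eq:bound alpha} is then the identity $v/28 = 2c_1/I$ with $v = 1/(100I)$ and $c_1 = 1/5600$.

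The main obstacle is the iterated gluing in the slab setting: in contrast with the planar case, two horizontal crossings can pass one another without meeting, so the topological condition of Lemma~\ref{lem:gluing}\ref{lem:gluing1} can only be enforced on the \emph{planar} overlap, which forces the introduction of vertical crossings of small squares as the glue. Carefully defining $\calA$ and $\calB$ at each step so that, after $N - 1$ iterations, the cluster produced truly spans $[0, 2n] \times [0, 2k^*]$ horizontally---while simultaneously tracking the multiplicative constants---is the delicate point; the use of the hypothesis $\delta$ (via $p_1 \leq p_3$) to ensure a positive lower bound on the vertical crossings of $\ovr{O_j}$ is essential to make the iteration productive.
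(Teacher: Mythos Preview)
Your overall strategy --- build a horizontal crossing of $[0,2n]\times[0,2k^*]$ by iterating Lemma~\ref{lem:gluing}\ref{lem:gluing1}, using a vertical crossing of each $2k^*\times 2k^*$ overlap square $O_j$ as the bridge --- is exactly what the paper does. The gap is in how you bound the probability of that vertical crossing.

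Your claim that $\phi_{p_3}(\calC_v(O_j)) \geq \delta'$ ``thanks to the hypothesis $\phi_{p_1}(\calC_v(2n,n)) > \delta$ \ldots\ and restriction to sub-rectangles'' is not justified. The hypothesis controls vertical crossings of $2m\times m$ rectangles (aspect ratio $2{:}1$, easy direction), whereas $O_j$ is a $2k^*\times 2k^*$ square. No monotonicity or restriction converts one into the other: a vertical crossing of $[0,4k^*]\times[0,2k^*]$ need not yield one of $[0,2k^*]\times[0,2k^*]$, and stacking two $2k^*\times k^*$ easy crossings would require precisely the horizontal connection we are trying to build. Even granting some $\delta'=\delta'(\delta)$, your absorption via ``the slack $v/28<v/14$'' fails: from $c\alpha\delta'\le x^{1/N}$ you get $\alpha\le \tfrac{1}{c\delta'}x^{v/14}$, and passing to $\alpha\le \tfrac{1}{c}x^{v/28}$ would require $x^{v/28}\le \delta'$, which is not uniform in $n,I$. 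The constant in~\eqref{eq:bound alpha} is exactly the gluing constant $c$, and this matters later when $c_0$ in Lemma~\ref{lem:manycrossor} must be chosen independent of $\delta$.

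The missing observation is the rotational symmetry of $\Lat$: by the assumed invariance under rotation by $\pi/2$, $\phi_{p_3}(\calC_v(2k^*,2k^*))=\phi_{p_3}(\calC_h(2k^*,2k^*))\ge \phi_{p_3}(\calC_h(\lceil(2+v)k^*\rceil,2k^*))=\alpha$, the last inequality since a longer rectangle is harder to cross. With this, each inductive step costs $(c\alpha)^2$ rather than $c^2\alpha\delta'$, and $j=14/v$ steps give $\phi_{p_3}(\calC_h(2n,n))\ge (c\alpha)^{28/v}$, which is exactly~\eqref{eq:bound alpha}.
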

	
	\noindent{\em Proof of Claim~\ref{claim:combinecross}.}
		Choose $k\in[\frac n8,\frac n2]$ achieving the maximum in~\eqref{eq:max}. 
		We will show by induction on $j \geq 1$ that 
		\begin{align*}
			\phi_{p_3}\big[ \calC_h((2+j v)k, 2k) \big] \geq (c\alpha)^{2j}.
		\end{align*}
		Applying this to $j = 14/v$, we obtain 
		\begin{align*}
			\phi_{p_3}\big( \calC_h(2n, n) \big)
			\geq \phi_{p_3}\big( \calC_h(16k, 2k) \big) \geq (c\alpha)^{28 /v} = (c\alpha)^{2800I}, 
		\end{align*}
		which implies \eqref{eq:bound alpha} readily.

	    \begin{figure}
	      \begin{center}
	        \includegraphics[width=1.0\textwidth]{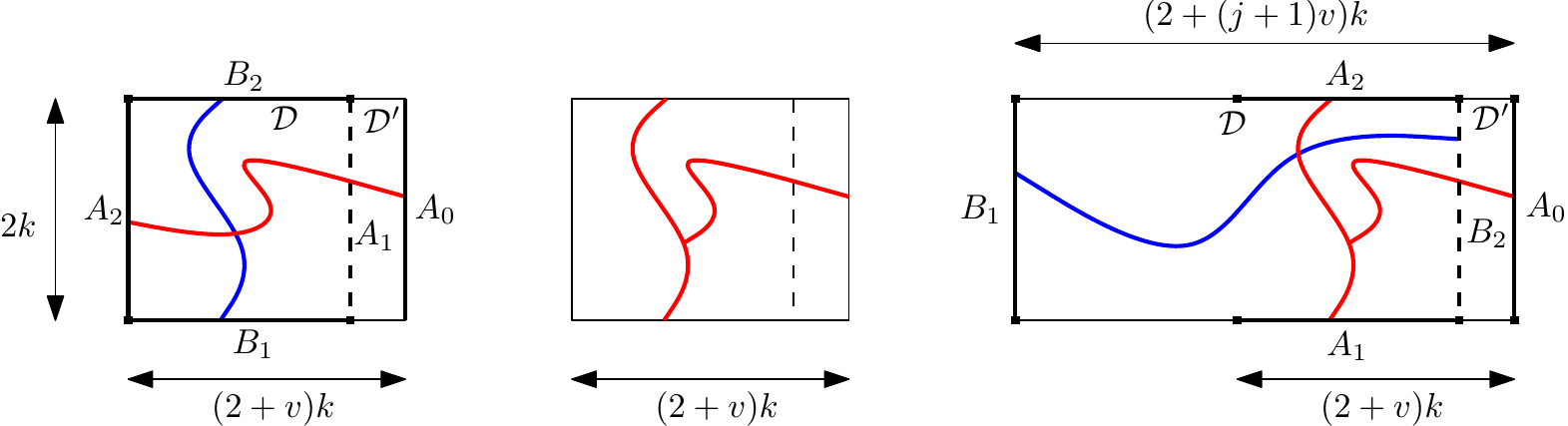}
	      \end{center}
	      \label{fig:claim0}
	      \caption{
	      \emph{Left:} The first application of the gluing lemma allows us to obtain the event $H$.
	      \emph{Middle:} The event $H$; the vertical crossing is contained in the square to the left.
	      \emph{Right:} The second application of the gluing lemma allows us to 
	      	combine $H$ with a horizontal crossing of $[0,(2+jv)k]\times [0,2k]$
		    to generate a horizontal crossing of the longer rectangle $[0,(2+(j+1)v)k]\times [0,2k]$.}
	    \end{figure}
	
		For $j = 1$ the statement is a direct consequence of the definition of $\alpha$. 
        Suppose the statement holds for some $j \geq 1$. 
		Let $H$ be the event that there exists an open cluster in 
		$\ovr{[jvk, (2+ (j+1) v)k]}$ $\ovr{\times [0, 2k]}$ which intersects $\ovr{\{(2+(j+1)v)k\}\times}\ovr{ [0, 2k]}$
		and contains a vertical crossing of the rectangle $\ovr{[jvk, (2+ j v)k] \times [0, 2k]}$.
	
		Apply the gluing lemma \ref{lem:gluing}\ref{lem:gluing1} 
		with domains $\calD' = \ovr{ [jvk, (2+ (j+1) v)k] \times [0, 2k]}$
		and $\calD =  \ovr{ [jvk, (2+ j v)k] \times [0, 2k]}$
		for the two events $\calC_h([jvk, (2+ (j+1)v)k] \times [0, 2k])$
		and $\calC_v([jvk, (2+ j v)k] \times [0, 2k])$
		(i.e. for $A_0 = \{(2+ (j+1)v)k\}\times [0, 2k]$, $A_1 = \{(2+jv)k \} \times [0, 2k]$, $A_2 = \{jvk \} \times [0, 2k]$
		and $B_1 = [jvk, (2+ j v)k] \times \{0\}$, $B_2 = [jvk, (2+ j v)k] \times \{2k\}$). 
		We obtain that 
		$$ 
			\phi_{p_3}( H ) \geq 
			c \phi_{p_3}\big[\calC_h([jvk, (2+ (j+1)v)k] \times [0, 2k]) \big] \phi_{p_3}\big[\calC_v([jvk, (2+ j v)k] \times [0, 2k])\big]
			\geq c \alpha^2. 
		$$
		where the first inequality is the conclusion of the gluing lemma 
		and the second is due to the invariance under translation and rotation. 
		
		Apply now the gluing lemma with the domains
	    $\calD' = \ovr{ [0, (2+ (j+1) v)k] \times [0, 2k]}$
		and $\calD =  \ovr{ [jvk, (2+ j v)k] \times [0, 2k]}$
		for the events $H$ and $\calC_h\big( (2+ (j+1) v)k, 2k\big)$
		(i.e. for $A_0 = \{(2+ (j+1) v)k\}\times [0, 2k]$, 
		$A_1 = [jvk, (2+ j v)k] \times \{0\}$
		$A_2 = [jvk, (2+ j v)k] \times \{2k\}$
		and $B_1 = \{0\} \times [0, 2k]$, $B_2 = \{(2+jv)k \} \times [0, 2k]$).
		The conclusion of the gluing lemma, together with the bound on $\phi_{p_3}(H)$  and the induction hypothesis, yield
		\begin{align*}
			\phi_{p_3}\big[ \calC_h((2+(j+1) v)k, 2k) \big] 
			\geq c \phi_{p_3}[H] \phi_{p_3}\big[ \calC_h([0, (2+ j v)k] \times [0, 2k]) \big]
			\geq (c\alpha)^{2(j+1)}.
		\end{align*}
		which is the desired conclusion.
	\hfill $\diamond$ \vspace{6pt}\\

	Fix an integer $k \in [\frac n4,\frac n2]$ and $u \in [v, 1/12] $ such that $ku \in \bbZ$. 
	The following three claims are concerned with crossings of the rectangle $\bfR(k) = \overline{[-(1 + u)k, (1 +u) k ] \times [0,2k]}$.
	Claims~\ref{startfromcenter}-\ref{onemakesthree} are equivalent to those used in~\cite[Proof of lemma 4.3]{DumMan14}; 
	Claim~\ref{onemakestwostronglydisjoint} however is specific to the case of slabs and requires special attention. 
	We give the proof of all claims for completeness. 
	
	\begin{claim}\label{startfromcenter} 
		Let $\calE(k)$ 
		be the event that there exists a vertical open crossing of $\bfR(k)$,
		with the lower endpoint not contained in $\overline{[-3uk, 3uk] \times \{0\}}$, 
		or the higher endpoint not contained in $\overline{[-3uk, 3uk] \times \{2k\}}$.
		Then $$ \phi_{p_3}(\calE(k)) \leq 4(\alpha + \sqrt{\alpha / c}). $$
	\end{claim}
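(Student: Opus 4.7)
My plan is to reduce the estimate to a single corner event via the reflective symmetries of $\slab$, and then bound that event by combining the Gluing Lemma~\ref{lem:gluing}\ref{lem:gluing1} with the upper bound $\alpha$ on horizontal crossings of $\bfR(k)$ established in Claim~\ref{claim:combinecross}.

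First, $\Lat$ (and hence $\slab$ and $\phi_{p_3}$) is invariant under reflection in the line $x=0$ and---by composing the latter with its translation invariance and the rotational symmetry by~$\pi/2$---also under reflection in the line $y=k$. The event $\calE(k)$ is therefore contained in the union of four symmetric sub-events, one per ``bad'' corner strip, so a union bound gives $\phi_{p_3}(\calE(k)) \le 4\phi_{p_3}(V)$ where $V := \{L_{\textrm{bot}} \xlra{\bfR(k)} T\}$, with $L_{\textrm{bot}}:=[-(1+u)k,-3uk]\times\{0\}$ and $T:=[-(1+u)k,(1+u)k]\times\{2k\}$.

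The core step is to prove $\phi_{p_3}(V)\le\alpha+\sqrt{\alpha/c}$. Since $\bfR(k)$ has aspect ratio $1+u\ge 1+v/2$, the defining supremum in~\eqref{eq:max} gives $\phi_{p_3}(H)\le\alpha$ for $H:=\calC_h(\bfR(k))$. I will apply Lemma~\ref{lem:gluing}\ref{lem:gluing1} with the projected domain $D=D'=\bfR(k)$, setting $A_0=A_1=L_{\textrm{bot}}$ and $A_2=T$ (so $\calA=V$), and taking $B_1,B_2$ as the left and right sides of $\bfR(k)$ (so $\calB=H$). The topological hypothesis holds by the planar Jordan curve theorem in a rectangle: any path from the bottom segment $L_{\textrm{bot}}$ to the top side $T$ must intersect any horizontal crossing of $\bfR(k)$. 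The output event $\calX$ is contained in $H$, and the lemma yields the multiplicative estimate
\begin{equation*}
	\phi_{p_3}(V)\,\phi_{p_3}(H) \;\le\; \phi_{p_3}(\calX)/c \;\le\; \alpha/c.
\end{equation*}

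I will conclude by splitting $\phi_{p_3}(V) = \phi_{p_3}(V\cap H) + \phi_{p_3}(V\setminus H)$: the first term gives the additive $\alpha$ part via $\phi_{p_3}(V\cap H)\le\phi_{p_3}(H)\le\alpha$. For the second, the multiplicative estimate forces $\phi_{p_3}(V)\le\sqrt{\alpha/c}$ whenever $\phi_{p_3}(H)\ge\sqrt{\alpha/c}$; in the complementary regime $\phi_{p_3}(H)<\sqrt{\alpha/c}$, already $\phi_{p_3}(V\cap H)<\sqrt{\alpha/c}$, and the residual $\phi_{p_3}(V\setminus H)$ is handled by a second, iterated application of the Gluing Lemma combined with FKG: writing $V^R$ for the $x$-reflection of $V$ (a vertical crossing from $R_{\textrm{bot}}:=[3uk,(1+u)k]\times\{0\}$ to $T$), FKG gives $\phi_{p_3}(V\cap V^R)\ge\phi_{p_3}(V)^2$, and gluing this intersection with a horizontal crossing of the top-half rectangle $[-(1+u)k,(1+u)k]\times[k,2k]$ (whose probability is also bounded by $\alpha$ by the same aspect-ratio argument) produces a cluster containing both $L_{\textrm{bot}}$ and $R_{\textrm{bot}}$ and spanning $\bfR(k)$ horizontally, which closes the bound. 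I expect the main technical obstacle to lie precisely in this residual regime where $\phi_{p_3}(H)\ll\sqrt{\alpha/c}$: the direct multiplicative estimate loses all its strength, and one must chain two applications of the Gluing Lemma while carefully tracking the accumulated constants so that the $\sqrt{\alpha/c}$ target survives.
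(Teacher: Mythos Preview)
There is a genuine gap. Your central inequality $c\,\phi_{p_3}(V)\,\phi_{p_3}(H) \le \phi_{p_3}(\calX) \le \alpha$ is correct but vacuous: to extract a bound on $\phi_{p_3}(V)$ from it you would need a \emph{lower} bound on $\phi_{p_3}(H)$, whereas the only information available on $H$ is the \emph{upper} bound $\phi_{p_3}(H)\le\alpha$. Nothing in the setup prevents $\phi_{p_3}(H)$ from being arbitrarily small, in which case your inequality says nothing about $\phi_{p_3}(V)$. Your rescue in the regime $\phi_{p_3}(H)<\sqrt{\alpha/c}$ has exactly the same defect: gluing $V\cap V^R$ with the top-half horizontal crossing $H'$ yields an output bounded below by a constant times $\phi_{p_3}(V)^2\,\phi_{p_3}(H')$, and again there is no lower bound on $\phi_{p_3}(H')$ (indeed $H'\subset H$, so in the very regime you are treating, $\phi_{p_3}(H')<\sqrt{\alpha/c}$ as well). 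Any scheme that feeds a horizontal crossing as one of the two inputs to the Gluing Lemma is doomed here, because we control those only from above.

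The paper's argument avoids this by using (a refinement of) $V$ as \emph{both} inputs. Write $\beta=\phi_{p_3}(V)$. The crossing realising $V$ either stays inside the narrower strip $\overline{[-(1+u)k,(1-2u)k]\times[0,2k]}$---a vertical crossing of a rectangle of the right aspect ratio, hence of probability at most $\alpha$---or it reaches the line $\overline{\{(1-2u)k\}\times[0,2k]}$; the latter event thus has probability at least $\beta-\alpha$. Reflecting through the vertical line $x=-3uk$ produces, with the same probability $\beta-\alpha$, a path from a bottom segment just to the right of $L_{\textrm{bot}}$ to the line $\overline{\{-(1+4u)k\}\times[0,2k]}$. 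These two paths satisfy the topological hypothesis of the Gluing Lemma, and the resulting $\calX$ is a horizontal crossing of $\overline{[-(1+4u)k,(1-2u)k]\times[0,2k]}$, of probability at most $\alpha$. Hence $c(\beta-\alpha)^2\le\alpha$, i.e.\ $\beta\le\alpha+\sqrt{\alpha/c}$; the factor $4$ then comes from the union over the four corners, exactly as in your first step.
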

	
	\noindent{\em Proof of Claim~\ref{startfromcenter}.}
		Let $\beta$ be the $\phi_{p_3}$-probability that there exists a vertical open crossing of $\bfR(k)$,
		with the lower endpoint in $\overline{[-(1 + u)k,-3uk] \times \{0\}}$.    
		By the definition of $\alpha$, the probability of crossing $\overline{[-(1 + u)k, (1 - 2u)k]\times[0,2k]}$ vertically is at
		most $\alpha$.
		Thus, with probability $\beta - \alpha$, there exists a vertical crossing of $\bfR(k)$ 
		with an endpoint in $\overline{[-(1 + u)k,-3uk] \times \{0\}}$ 
		which intersects the vertical line $\overline{\{(1 - 2u)k\} \times [0,2k]}$.
		By reflection with respect to $\{-3uk\}\times [0,2k]$,
		with probability $\beta -\alpha$, there exists an open path in $\overline{[-(1 + 4u)k, (1 - 5u)k] \times [0,2k]}$, 
		between $\overline{[- 3uk ,(1 - 5u)k]\times \{0\}}$ and $\overline{\{-(1 + 4u)k\} \times [0,2k]}$.
		
		When combining the two events above using the first part of the gluing lemma, we obtain    
		$$\phi_{p_3}\big(\calC_h\overline{[-(1 + 4u)k, (1 - 2u)k] \times [0,2k]}\big) \geq c(\beta -\alpha)^2.$$
		The above event has probability less than $\alpha$ (by definition of $\alpha$), hence
		$ \beta \leq (\alpha + \sqrt{\alpha/c}).$
		By considering the other possibilities for the lower and higher endpoints, the claim follows.
	\hfill $\diamond$ \vspace{6pt}\\

	\begin{claim}\label{touchright} 
		Let $\calF(k)$ 
		be the event that there exists a vertical open crossing of $\bfR(k)$
		that does not intersect the vertical line $\overline{\{(1 - 2u)k \} \times [0,2k]}$.
		Then  $$ \phi_{p_3} (\calF(k)) \leq 2\alpha. $$
	\end{claim}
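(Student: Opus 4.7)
The strategy is to split $\calF(k)$ according to which side of the line $\overline{\{(1-2u)k\}\times[0,2k]}$ the vertical crossing lies on, and to bound each piece by $\alpha$ using rectangle monotonicity together with the $\pi/2$-rotation symmetry of $\Lat$. The first coordinate of a vertex of $\slab$ changes by a bounded amount along each edge, so any path that avoids the entire vertical line keeps its first coordinate strictly on one side of $(1-2u)k$. Consequently every realisation of $\calF(k)$ is a vertical crossing either of $L:=[-(1+u)k,(1-2u)k]\times[0,2k]$ or of $R:=[(1-2u)k,(1+u)k]\times[0,2k]$, and a union bound gives
$$\phi_{p_3}(\calF(k)) \leq \phi_{p_3}(\calC_v(L)) + \phi_{p_3}(\calC_v(R)).$$

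The right half $R$ is narrow, of width only $3uk\leq k/4$, but it sits inside the wider rectangle $R':=[(2u-1)k,(1+u)k]\times[0,2k]$ of width $(2-u)k$ and identical height. Since the bottom and top of $R$ lie inside those of $R'$, every vertical crossing of $R$ is a vertical crossing of $R'$, and translation invariance of $\phi_{p_3}$ (applied to the integer shift that carries $R'$ onto $L$) gives $\phi_{p_3}(\calC_v(R))\leq \phi_{p_3}(\calC_v(R'))=\phi_{p_3}(\calC_v(L))$. It therefore suffices to prove $\phi_{p_3}(\calC_v(L))\leq \alpha$.

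For this I would apply the $\pi/2$-rotation symmetry of $\Lat$ to rewrite $\phi_{p_3}(\calC_v(L))=\phi_{p_3}(\calC_h([0,2k]\times[0,(2-u)k]))$, and then set $k'':=(2-u)k/2$. Since $k\in[n/4,n/2]$ and $u\leq 1/12$, one checks that $k''\in[n/8,n/2]$, so the definition of $\alpha$ gives $\phi_{p_3}(\calC_h(\lceil(2+v)k''\rceil, 2k''))\leq \alpha$. Both rectangles share the height $(2-u)k=2k''$, and the elementary inequality $(2+v)(2-u)\leq 4$ -- valid whenever $u\geq v$ -- shows that the $\alpha$-rectangle is no wider than $[0,2k]\times[0,(2-u)k]$. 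Monotonicity of horizontal crossing probabilities in width, obtained by truncating any crossing of the wider rectangle at its first visit to the right side of the narrower one, then delivers the desired bound.

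The one delicate point is the combined numerical check $n/8\leq (2-u)k/2\leq n/2$ and $(2+v)(2-u)\leq 4$, which together rest on the standing assumptions $u\in[v,1/12]$ and $k\in[n/4,n/2]$. Once these are verified the remainder is a routine use of translation and rotation invariance of $\phi_{p_3}$ and of the monotonicity of horizontal crossing events in the width of the rectangle.
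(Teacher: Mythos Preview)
Your proof is correct and follows essentially the same approach as the paper: split according to which side of the line the crossing lies on, and bound each piece by $\alpha$ via rotation and comparison with the rectangles in the definition of $\alpha$. The paper's own proof is a one-liner asserting exactly this; you have simply unpacked the monotonicity and range checks (including the convenient reduction of the narrow right piece to the left one via the embedding $R\subset R'$), which the paper leaves implicit.
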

	
	\noindent{\em Proof of Claim~\ref{touchright}.}
		Any vertical crossing of $\bfR(k)$ not intersecting $\overline{\{(1-2u)k \} \times [0,2k]}$ 
		is either contained in  $\overline{[-(1 + u)k, (1 - 2u) k ] \times [0,2k]}$ 
		or in $\overline{[(1 - 2u)k, (1 + u) k ] \times [0,2k]}$.
		Both these rectangles are crossed vertically with probability less than $\alpha$, and the claim follows. 
	\hfill $\diamond$ \vspace{6pt}\\

	\begin{claim}\label{topbeforeright} 
		Let $\calG(k)$ 
		be the event that there exists an open path in $\overline{\bbR \times [0,(2- 11 u)k]}$
		between $\overline{[-3uk, 3uk] \times \{0\}}$ and the vertical segment $\overline{\{(1-2u)k\} \times  [0,(2- 11 u)k]}$.
		Then  $$ \phi_{p_3} (\calG(k)) \leq \alpha + \sqrt{\alpha/c}.$$
	\end{claim}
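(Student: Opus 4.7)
The strategy mirrors that of Claim~\ref{startfromcenter}: reflect $\calG(k)$ across a symmetry axis of $\Lat$ and combine the two events via the gluing lemma~\ref{lem:gluing}\ref{lem:gluing1} to produce a horizontal crossing whose probability can be controlled by $\alpha$. Set $\gamma = \phi_{p_3}(\calG(k))$.

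First, I would reflect across the vertical line $x = -3uk$, which is a symmetry axis of $\Lat$ since $ku \in \bbZ$, obtained by composing the assumed reflection across $\{(0,y):y\in\bbR\}$ with a translation. The reflected event $\tilde\calG(k)$ is then the existence of an open path in $\overline{\bbR \times [0,(2-11u)k]}$ from $\overline{[-9uk,-3uk] \times \{0\}}$ to $\overline{\{-(1+4u)k\} \times [0,(2-11u)k]}$, and has the same $\phi_{p_3}$-probability $\gamma$.

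Granted the topological condition~\eqref{eq:topo_cond} of Lemma~\ref{lem:gluing}, the gluing would furnish with probability at least $c\gamma^2$ a single open cluster in $\overline{\bbR \times [0,(2-11u)k]}$ containing both paths, and therefore a horizontal crossing of $\overline{[-(1+4u)k,(1-2u)k] \times [0,(2-11u)k]}$. This rectangle has width $(2+2u)k$ and height $(2-11u)k$; a short computation using $u \geq v$ shows that $\frac{2(2+2u)}{2-11u} > 2+v$, so by monotonicity in width together with the definition of $\alpha$ and the rotation-translation invariance of $\phi_{p_3}$, its horizontal crossing probability is at most $\alpha$. Combining these steps would yield $c\gamma^2 \leq \alpha$ and hence $\gamma \leq \sqrt{\alpha/c}$.

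The main obstacle is verifying the topological condition. Unlike in Claim~\ref{startfromcenter}, where one of the combined events is a vertical crossing of $\bfR(k)$ that cleanly separates the domain, here neither the $\calG$-path nor the $\tilde\calG$-path forms a barrier: both simply travel from the bottom of the strip to a lateral boundary, and in the planar graph $\Lat$ they may run along the bottom at disjoint $x$-ranges and fail to share a vertex. I would remedy this by first restricting $\calG(k)$ to the sub-event in which the $\calG$-path reaches a prescribed minimum height on the line $\{(1-2u)k\}$, which forces any realising path to cross an ascending $\tilde\calG$-path in $\Lat$. The complementary sub-event -- where the $\calG$-path stays too low -- is then the existence of a crossing of a thin rectangle whose aspect ratio makes it bounded directly by $\alpha$, in the spirit of Claim~\ref{touchright}. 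This decomposition is what accounts for the additive $\alpha$ appearing in the stated upper bound.
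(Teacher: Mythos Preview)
Your overall strategy---reflect, glue via Lemma~\ref{lem:gluing}\ref{lem:gluing1}, and decompose to secure the topological condition---matches the paper's, but your choice of reflection axis makes the topological condition unrecoverable. Reflecting about $x=-3uk$ sends the starting segment to $[-9uk,-3uk]$ and the target line to $\{-(1+4u)k\}$: the reflected path starts to the \emph{left} of the original's starting point and travels further \emph{left}, while the original travels \emph{right}. These paths simply diverge; no minimum height reached by the $\calG$-path on $\{(1-2u)k\}$ can force it to meet a $\tilde\calG$-path heading in the opposite direction. Your complementary sub-event is also not bounded by~$\alpha$: a low endpoint on $\{(1-2u)k\}$ does not confine the path to any rectangle, since the path may still wander freely in $\bbR\times[0,(2-11u)k]$.

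The paper reflects about $x=3uk$, the \emph{right} edge of the starting segment, so the reflected path starts in $[3uk,9uk]\times\{0\}$ and targets $\{-(1-8u)k\}$. The decomposition is horizontal rather than vertical: either the original path already crosses $\{-(1-8u)k\}$, in which case it contains a horizontal crossing of $[-(1-8u)k,(1-2u)k]\times[0,(2-11u)k]$, bounded directly by~$\alpha$; or it does not, in which case (truncating at its first contact with $\{(1-2u)k\}$) it lies in that rectangle~$D$. The reflected second-case path likewise lies in~$D$. Now the original starts at $x\le 3uk$ on the bottom of~$D$ and ends on its right side, while the reflected path starts at $x\ge 3uk$ on the bottom and ends on its left side: in the planar rectangle~$D$ these must intersect, so condition~\eqref{eq:topo_cond} holds and the gluing yields $c(\gamma-\alpha)^2\le\alpha$.
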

	
	\noindent{\em Proof of Claim~\ref{topbeforeright}.}
		Let $\beta = \phi_{p_3} (\calG(k))$. 
		Suppose $\calG(k)$ occurs and let $\ga$ be an open path in $\overline{\bbR \times [0,(2- 11 u)k]}$
		between $\overline{[-3uk, 3uk] \times \{0\}}$ and $\overline{\{(1-2u)k\} \times  [0,(2- 11 u)k]}$.
		There are two possibilities for $\ga$. 
		Either $\ga$ crosses the line $\overline{\{-(1-8u)k\} \times [0,(2-11u)k]}$, or it does not. 
		
		The first situation arises with probability at most $\alpha$
		since it induces a horizontal crossing of the rectangle $\overline{[-(1-8u)k, (1-2u)k] \times [0, (2 - 11u)k]}$.
		
		Thus the second situation arises with probability at least $\beta - \alpha$. 
		Then, by symmetry with respect to $\{3uk\} \times \bbR$, 
		with probability at least $\beta - \alpha$ there exists an open path connecting 
		$\overline{[3uk, 9uk] \times \{0\}}$ to $\overline{\{-(1-8u)k\} \times  [0,(2- 11 u)k]}$. 
		Hence, by the first part of the gluing lemma, $\overline{[-(1-8u)k, (1-2u)k] \times [0, (2-11u)k]}$ 
		is crossed horizontally with probability no less than $c \: (\beta - \alpha)^2$. 
		This is less than or equal to $\alpha$ by its definition, and the claim follows.
	\hfill $\diamond$ \vspace{6pt}\\

	In the claims above we have introduced the events $\calE(k)$, $\calF(k)$ and  $\calG(k)$. 
	In addition, define $\widetilde{\calG}(k)$ as the symmetric of $\calG(k)$ with respect to the line $\bbR \times \{ k \}$, 
	i.e. the event that there exists an open path in $\overline{\bbR \times [11uk,2k]}$
	between $\overline{[-3uk, 3uk] \times \{2k\}}$ and $\overline{\{(1-2u)k\} \times  [11uk,2k]}$.
	The bound of Claim~\ref{topbeforeright} applies to $\widetilde{\calG}(k)$ as well. 
	
	All four events revolve around the rectangle ${\bfR(k)}$.
	In the following, we will use translates of these events (by $z\in \Lat$), 
	and we will say for instance that $\calE(k)$ occurs in some rectangle ${\bfR(k)} + z$ 
	if $\calE(k)$ occurs for the translate of the configuration by $-z$. 
	
	\begin{claim}\label{onemakesthree}
		Except on an event $\calH(k)$, with $\phi_{p_3}(\calH(k)) \leq \frac{96} {u} \sqrt{\alpha/c}$, 
		any open vertical crossing of $\bfS(k) = \overline{[0,2n] \times [-k, k]}$, 
		contains two separated vertical crossings of $ \bfS((1 - 11u)k) =  \overline{[0,2n] \times [-(1 - 11u)k, (1 - 11u)k]}$.
		\end{claim}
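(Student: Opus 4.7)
My plan is to cover $\bfS(k)=\ovr{[0,2n]\times[-k,k]}$ horizontally by $N=O(1/u)$ translates of $\bfR(k)$, each vertically centred on $\bfS(k)$, with horizontal centres $x_1,\dots,x_N\in[0,2n]$ chosen $6uk$-dense, so that every point of $[0,2n]$ lies within $3uk$ of some $x_j$. The event $\calH(k)$ is defined as the union, over these translates, of the bad events $\calE, \calF, \calG, \widetilde\calG$ of Claims~\ref{startfromcenter}--\ref{topbeforeright}, together with the natural left--right reflections of $\calF$ and $\calG$. A union bound combined with the probability estimates of those claims yields
\begin{equation*}
\phi_{p_3}(\calH(k))\;\le\;\tfrac{96}{u}\sqrt{\alpha/c},
\end{equation*}
once the subdominant $O(\alpha)$ contributions are absorbed into the $O(\sqrt{\alpha/c})$ ones (valid for $\alpha$ small, which is the only regime of interest; the $96$ is the sum $6N u$ of the leading coefficients accumulated over the finitely many bad events per translate).

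I would then show that on $\calH(k)^c$, every vertical open crossing $\gamma$ of $\bfS(k)$ contains two separated vertical crossings of $\bfS((1-11u)k)$. By the $6uk$-density of the $x_j$, $\gamma$ contains a vertical sub-crossing $\gamma'$ of some translate $\bfR(k)+z_j$. On $\calH(k)^c$: the complement of $\calE(k)+z_j$ forces both endpoints of $\gamma'$ into the narrow central intervals of width $6uk$ on the top and bottom of the translate; the complements of $\calF(k)+z_j$ and its reflection force $\gamma'$ to touch both lateral lines $x=x_j\pm(1-2u)k$; and the complements of $\calG(k)+z_j$ and $\widetilde\calG(k)+z_j$ forbid any connection from the central bottom (resp.\ top) of the translate to its lateral lines inside the lower (resp.\ upper) sub-rectangle of height $(2-11u)k$. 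Together these constraints force $\gamma'$ to climb from the central bottom into the upper strip $y>2k-11uk$, descend below $y=11uk$, and climb again. Hence $\gamma'$ contains at least two vertical crossings of the central horizontal band $\{11uk\le y\le 2k-11uk\}$ of the translate. After the vertical identification of this band with $\bfS((1-11u)k)$ --- which is possible since the translate is vertically centred on $\bfS(k)$ --- these become two vertical crossings of $\bfS((1-11u)k)$. They are separated there because any portion of $\gamma'$ connecting them lies in $\{|y|>(1-11u)k\}$, i.e.\ outside $\bfS((1-11u)k)$.

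The main obstacle is the last step: the topological argument that forces $\gamma'$ into the zig-zag shape and guarantees that the two resulting crossings are \emph{separated} when restricted to $\bfS((1-11u)k)$. Although $\slab$ is not planar, this argument is localized inside a single translate and concerns only the projection of $\gamma'$ onto $\Lat$, which is planar, so a careful case analysis (mirroring the corresponding step in~\cite[Lemma~4.3]{DumMan14}) suffices. Importantly, the conclusion is stated in terms of separation in the restricted rectangle rather than any stronger disjointness, because that is precisely the notion that interfaces with the gluing lemma and that provides the lower bound on $H_{\calC_h(2n,n/2)}$ needed downstream in Lemma~\ref{lem:manycrossor}.
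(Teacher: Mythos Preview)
Your overall architecture matches the paper's: cover $\bfS(k)$ by overlapping translates of $\bfR(k)$, define $\calH(k)$ as the union of the bad events of Claims~\ref{startfromcenter}--\ref{topbeforeright} over these translates, bound $\phi_{p_3}(\calH(k))$ by a union bound, and on $\calH(k)^c$ force every vertical crossing into a zig-zag producing two sub-crossings of $\bfS((1-11u)k)$. Two steps, however, do not go through as you have written them.

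\textbf{Localization.} The sentence ``By the $6uk$-density of the $x_j$, $\gamma$ contains a vertical sub-crossing $\gamma'$ of some translate $\bfR(k)+z_j$'' is not a consequence of density: a vertical crossing of $\bfS(k)$ can exit every translate through a lateral side and never cross any single translate from bottom to top. The paper deals with this by adding to $\calH(k)$ one further event per $j$, namely that the slightly narrower rectangle $\ovr{[juk,(2+(j+1)u)k]\times[-k,k]}$ is crossed horizontally (probability $\le\alpha$). On the complement of these events the horizontal span of $\gamma$ is at most $(2+u)k$, so $\gamma$ lies entirely inside one $\bfR_j$. You need this (or an equivalent device) before you can apply $\calE^c,\calF^c,\calG^c,\widetilde\calG^c$ to $\gamma$.

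\textbf{Separation.} Your final sentence only shows that the two sub-crossings are not joined \emph{along $\gamma'$} inside $\bfS((1-11u)k)$. But ``separated'' means they lie in distinct $\om$-clusters of the restricted configuration: there may well be other $\om$-open edges, not on $\gamma'$, linking them inside $\bfS((1-11u)k)$. The paper's argument is genuinely different here: it uses that the whole analysis applies to \emph{every} vertical crossing of $\bfS(k)$. If some open path $\chi\subset\bfS((1-11u)k)$ connected $\gamma^1$ to $\gamma^2$, one could splice $\chi$ into $\gamma$ to produce a new vertical crossing of $\bfR_j$, and then argue that this new crossing avoids the lateral line $\{(2+(j-1)u)k\}\times[-k,k]$, so that $\calF(k)$ would occur in $\bfR_j$, a contradiction. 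Without this step the claim is not proved.
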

		
		\noindent{\em Proof of Claim~\ref{onemakesthree}.}
		The rectangle $\ovr{[0,2n] \times [-k, k]}$ is the union of 
		${\bfR_j} = \ovr{[juk,(2 + (j + 2)u)k]}$ $\ovr{\times [-k, k]}$, for $0\le j\le J$, 
		where 
		$$J ~:=~  \big\lfloor\tfrac1u (\tfrac{n}{k} -2)\big\rfloor - 2 ~\leq~ 6/u.$$
		Let $\calH(k)$ be the union of the following events for $0\le j\le J$: 
		\begin{itemize}[nolistsep,noitemsep]
		\item the rectangle $\ovr{[juk,(2 + (j + 1)u)k] \times [-k, k]}$ contains a horizontal open crossing,
		\item $\calE(k)$ occurs in the rectangle $\bfR_j$,
		\item $\calF(k)$ occurs in the rectangle ${\bfR_j}$,
		\item at least one of $\calG(k)$ and $\tilde{\calG}(k)$ occurs in the rectangle ${\bfR_j}$.
		\end{itemize}
		Using a simple union bound and the estimates of Claims~\ref{startfromcenter}-\ref{topbeforeright}, we obtain 
		\begin{align}\label{eq:1001}
			\phi_{p_3}( \calH(k) ) 
			\leq \frac{96 \sqrt{\alpha/c}}{u}. 
		\end{align}
		Consider a configuration not in $\calH(k)$ containing a vertical open crossing $\ga$ of $\bfS(k)$. 
		We are now going to explain why such a crossing necessarily contains two separated crossings of ${\bfS((1-11u)k)}$. 
		
		Since none of the rectangles $\ovr{[juk,(2 + (j + 1)u)k] \times [-k, k]}$ is crossed horizontally,
		$\ga$ is contained in one of the rectangles ${\bfR_j}$.
		Fix the corresponding index $j$. Parametrize $\ga$ by $[0,1]$, with $\ga_0$ being the lower endpoint. 
		
		Since $ \calE(k)$ does not occur in ${\bfR_j}$, $\ga_0$ and $\ga_1$, are contained in
		$\ovr{[(1+(j-2)u)k,(1+(j+4)u)k]}$ $\ovr{ \times \{-k\}}$ and  $\ovr{[(1+(j-2)u)k,(1+(j+4)u)k] \times \{k\}}$, respectively. 
		Moreover, since $\calF(k)$ does not occur in $\bfR_j$, 
		$\ga$ crosses the vertical line $\ovr{\{(2 + (j-1)u)k \} \times [-k, k]}$. 
		Let $t$ and $s$ be the first and last times that $\ga$ intersects this vertical line. 
		
		Since $\calG(k)$ does not occur in $\bfR_j$, 
		$\ga$ intersects the line $\ovr{[0,2n] \times \{(1-11u)k \}}$ before time $t$. 
		Likewise, since $\tilde{\calG}(k)$ does not occur, 
		$\ga$ intersects the line $\ovr{[0,2n] \times \{-(1-11u)k \}}$ after time $s$. 
		This implies that $\ga$ contains at least two disjoint crossings of ${\bfS((1 - 11u)k)}$.
		Call $\ga^1$ the first one (in the order given by $\ga$) and $\ga^2$ the last one. 
		
		The above holds for any vertical crossing $\ga$ of  $\bfS(k)$, 
		hence the crossings $\ga^1$ and $\ga^2$ are necessarily separated in ${\bfS((1 - 11u)k)}$.
		Indeed, if they were connected inside  ${\bfS((1 - 11u)k)}$, then $\calF(k)$ would occur.
	\hfill $\diamond$ \vspace{6pt}\\

	\begin{claim}\label{onemakestwostronglydisjoint}
		Let $\calI(k)$ be the event that there exists an open vertical crossing of $\bfS(k)$, 
		which does not contain two strongly separated vertical crossings of $\bfS((1 - 11u)k)$. Then 
		$$\phi_{p_2}(\calI(k)) \leq \frac{C'} {u} \sqrt{\alpha},$$ 
		where $C' > 0$ is a constant depending only on $p_2$ and $p_3$. 
	\end{claim}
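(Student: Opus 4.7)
To bound $\phi_{p_2}(\calI(k))$ I would split $\calI(k) \subseteq \calH(k) \cup (\calI(k) \setminus \calH(k))$ and control each piece separately. The event $\calH(k)$ is \emph{increasing}, being a finite union of events asserting the existence of certain open crossings, so stochastic domination gives
\[
\phi_{p_2}(\calH(k)) \le \phi_{p_3}(\calH(k)) \le \frac{96}{u}\sqrt{\alpha/c},
\]
where the second inequality is Claim~\ref{onemakesthree}.

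On $\calI(k) \setminus \calH(k)$, Claim~\ref{onemakesthree} guarantees that every open vertical crossing of $\bfS(k)$ contains two separated sub-crossings $\ga^1, \ga^2$ of $\bfS((1-11u)k)$; since we are in $\calI(k)$, they cannot be strongly separated. Hence there must exist an overlap point $z \in \Lat$ where $\ga^1$ and $\ga^2$ share a column, i.e.\ vertices $v_1 \in \ga^1 \cap \ovr{\{z\}}$ and $v_2 \in \ga^2 \cap \ovr{\{z\}}$ not connected inside $\bfS((1-11u)k)$. I would fix a canonical such triple $(\ga^1,\ga^2,z)$ for each configuration in this event.

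The core of the argument is to show that overlapping separated sub-crossings are rare. The plan is to apply Lemma~\ref{lem:gluing}\ref{lem:gluing1} locally around $z$: the portions of $\ga^1$ and $\ga^2$ restricted to a small $\Lat$-neighbourhood of $z$ yield two paths whose $\Lat$-projections must meet at $z$, so the gluing lemma produces with positive probability an actual open connection between them through the fiber $\ovr{\{z\}}$. Such a connection contradicts the separation of $\ga^1, \ga^2$ in $\bfS((1-11u)k)$, and hence bounds the probability of ``separated and overlapping at $z$'' by a constant factor times the probability that one of the crossing events defining $\calH(k)$ (or a close analogue) occurs. A union bound over the $O(n^2)$ candidate overlap points $z$ in $\bfS((1-11u)k)$, combined with the first paragraph, then yields $\phi_{p_2}(\calI(k) \setminus \calH(k)) \le \frac{C}{u}\sqrt{\alpha}$.

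The principal obstacle lies in this last step: one must choose the auxiliary domains $D \subset D'$ and boundary sets $A_0, A_1, A_2, B_1, B_2$ in Lemma~\ref{lem:gluing} so that (a) its topological hypothesis holds for the canonical pair $(\ga^1,\ga^2)$ near $z$, and (b) its conclusion, combined with stochastic domination from $\phi_{p_2}$ to $\phi_{p_3}$ and the bound on $\calH(k)$, produces a genuinely small estimate rather than a mere constant. This is the only place in the argument where planarity of $\Lat$ alone does not suffice, and it is precisely where the gluing machinery of Section~\ref{sec:glue} replaces Jordan's theorem in the slab setting. Combining the two bounds then yields $\phi_{p_2}(\calI(k)) \le \frac{C'}{u}\sqrt{\alpha}$ with $C'$ depending only on $p_2, p_3$ and $S$.
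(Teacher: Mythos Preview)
Your approach has two genuine gaps, and it misses the tool that makes the paper's proof work.

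First, your reading of ``separated but not strongly separated'' is incorrect. Failure of strong separation means that $\ovr{\ga^1}$ and $\ovr{\ga^2}$ lie in the same open cluster of $\bfS((1-11u)k)$; this says there is a \emph{third} open path $\chi$ in $\bfS((1-11u)k)$ from a point of $\ovr{\ga^1}$ to a point of $\ovr{\ga^2}$. Thus $\chi$ overlaps $\ga^1$ at some column $u$ and $\ga^2$ at some (possibly different) column $v$, but there is no reason for $\ga^1$ and $\ga^2$ themselves to share a column. Your single overlap point $z$ need not exist.

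Second, even after correcting this, the route via Lemma~\ref{lem:gluing}\ref{lem:gluing1} followed by a union bound over $O(n^2)$ candidate columns cannot produce the stated estimate: the union bound contributes a polynomial factor in $n$ that nothing absorbs, and the gluing lemma only furnishes a \emph{lower} bound on the probability of connecting, not an upper bound on the probability of ``overlapping but not connected'' that would be small per site. You flag the difficulty of choosing domains, but the structural problem is the union bound itself.

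The paper bypasses the gluing lemma here entirely and uses Lemma~\ref{lem:bnddistoevent}. For $\om \in \calI(k)\setminus\calH(k)$, take the third path $\chi$ overlapping $\ga^1$ at $u$ and $\ga^2$ at $v$, and open every edge in the two fibres $\ovr{\{u\}}$ and $\ovr{\{v\}}$ (at most $2|E_S|$ edges). In the modified configuration $\om'\ge\om$, the paths $\ga^1,\chi,\ga^2$ are genuinely connected inside $\bfS((1-11u)k)$, so by the analysis in Claim~\ref{onemakesthree} (specifically, the argument via $\calF(k)$) one has $\om'\in\calH(k)$. Hence $\calI(k)\subset\{H_{\calH(k)}\le 2|E_S|\}$, and Lemma~\ref{lem:bnddistoevent} with $p=p_2$, $p'=p_3$ gives
\[
\phi_{p_2}(\calI(k)) \;\le\; C^{2|E_S|}\,\phi_{p_3}(\calH(k)) \;\le\; \frac{C'}{u}\sqrt{\alpha}.
\]
This is exactly why the claim involves two parameters $p_2<p_3$: the bounded Hamming distance is traded for a change of edge-weight, with no union bound over sites and no appeal to the gluing machinery.
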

	
	\noindent{\em Proof of Claim~\ref{onemakestwostronglydisjoint}.}
		Let $\om \in \calI(k) \setminus \calH(k)$ and $\ga$ be an $\om$-open vertical crossing of $\bfS(k)$ 
		which does not contain two strongly separated vertical crossings of $\bfS((1 - 11u)k)$. 
		Let $\ga^1$ be the first subpath of $\ga$ crossing $\bfS((1 - 11u)k)$ vertically, and 
		let $\ga^2$ be the last (when $\ga$ is oriented from bottom to top). 
		By choice of $\om$ in $\calI(k) \setminus \calH(k)$, 
		$\ga^1$ and $\ga^2$ are separated in $\bfS((1 - 11u)k)$, but not strongly separated. 
		Hence there exists a third open path $\chi$ in $\bfS((1 - 11u)k)$ that overlaps with both $\ga^1$ and $\ga^2$. 
		Fix such a path $\chi$ and overlap points $u,v \in \Lat$ between $\chi$ and $\ga^1$ and $\ga^2$, respectively. 
		Then, if $\om'$ is the configuration obtained form $\om$ by opening all the edges in $\ovr{\{u\}}$ and $\ovr{\{v\}}$,
		we have $\om'\in \calH(k)$. 
		Indeed, the $\om'$-open path obtained by following $\ga^1$ up to $u$, 
		then $\chi$ to $v$ and finally $\ga^2$ from $v$ to the top of $\bfS((1 - 11u)k)$ crosses $\bfS((1 - 11u)k)$ vertically, 
		but does not contain two sub-paths separated in $\bfS((1 - 11u)k)$.
		Thus 
		$$ \om \in  \big\{H_{\calH(k)} \leq 2|E_S| \big\},$$
		and consequently $\calI(k)\subset \{H_{\calH(k)} \leq 2|E_S| \}$. 
		Lemma~\ref{lem:bnddistoevent} implies that
		\begin{align*}
			\phi_{p_2}\big(\calI(k)\big) 
			\leq \phi_{p_2}\big( H_{\calH(k)} \leq 2|E_S|\big) 
			\leq C^{2|E_S|} \phi_{p_3}\big(\calH(k)\big),
		\end{align*}
		where $C= \frac{q^2(1-p_2)}{(p_3-p_2)[p_2+q(1-p_2)]}.$ 
		By inserting the bound \eqref{eq:1001} on $\phi_{p_3}(\calH(k))$ into the above, 
		we obtain the desired result with $C' = \frac{96 C^{2|E_S|}}{\sqrt{c}}$. 
	\hfill $\diamond$ \vspace{6pt}\\
	
	Getting back to the proof of the lemma.
	Let $k_i = \lfloor (1 - 22vi)n/2 \rfloor $ for $0\le i\le I$.
	We will investigate vertical crossings of the nested strips $\bfS(k_i) =\ovr{[0,2n] \times [-k_i, k_i]}$.
	Note that ${\bfS(k_0)}$ is contained in a translation of the rectangle $\ovr{[0,2n] \times [0,n]}$, 
	and that ${\bfS(k_I)}$ contains a translation of the rectangle $\ovr{[0,2n] \times [0,n/2]}$.
	
	Fix a sequence $(u_i)_i$, 
	with $u_i \in [v, 2v]$ and $k_i u_i \in \bbZ$ for $ 0\le i < I$. 
	The existence of $u_i$ is due to the fact that $v \ge \tfrac4n$ (since $I \le n/400$).
	Define the events $\calI(k_i)$ of Claim~\ref{onemakestwostronglydisjoint} for these values of~$u_i$. 
	Except on the event $\bigcup_{i = 0}^{I-1}  \calI(k_i)$, 
	any configuration with a vertical crossing of ${\bfS(k_0)}$ has $2^I$ 
	strongly separated vertical open crossings of ${\bfS(k_I)}$.
	
	By the union bound, claims \ref{claim:combinecross} and \ref{onemakestwostronglydisjoint} 
	and the definitions of $u$ and $v$, we obtain
	\begin{align*}
		\phi_{p_2} \left( \bigcup_{i = 0}^{I - 1}  \calI(k_i) \right)  
		\leq \frac{C'\sqrt{\alpha}}{u}I		
		\le  \frac{100 C' I^2}{\sqrt{c}} \big[ \phi_{p_3}(\calC_h(2n,n)) \big]^{c_1/I}
		\leq \frac{100 C' c_0 }{\sqrt{c}} \delta,
	\end{align*}
	where the last inequality is due to the choice of $I$.
	We may choose $c_0 = \sqrt{c}/(200 C') > 0$, so that the right-hand side is smaller than $\de/2$.  
	But ${\bfS(k_0)}$ is crossed vertically with $\phi_{p_2}$-probability at least $\delta$,
	hence, with $\phi_{p_2}$-probability at least $\delta /2$, 
	${\bfS(k_I)}$ contains $2^I$ strongly separated vertical crossings.
	By the observations made before the proof, we have 
	$$  
		\phi_{p_2}\big[H_{\calC_h (2n,n/2)} \geq 2^{I} -1\big] 
		\geq \phi_{p_2}\big[H_{\calC_h (\bfS(k_I))} \geq 2^{I} -1 \big] 
		\geq \frac{\de}2,
	$$ 
	which directly implies the desired result. 
\end{proof}

\section{Proof of Theorem~\ref{thm:main}}\label{sec:mainproof}

The previous section showed that for $p > \tilde p_c$, 
crossing probabilities in the hard direction for $2n \times n$ rectangles 
are bounded away from $0$, uniformly in $n$. 
The following two results show us that these probabilities actually tend rapidly to $1$
as $n \to \infty$, for any $p > \tilde p_c$.

We start with a lemma taken from~\cite[Cor. 5.2]{DumMan14} and which is valid in all dimensions.
It is an integrated form of the result of \cite{GraGri11}. 
We do not give the proof here, as it is identical to the one in \cite{DumMan14}.

\begin{lemma}\label{cor:gg_applied}
	For any $0 < p < p' < 1$, there exists $c = c(p)>0$ such that, for $n \geq 1$, 
	\begin{align}\label{eq:ggsh2}
		\phi_{p}(\calC_h(2n,n))
		\big( 1-\phi_{p'}(\calC_h(2n,n)) \big)
		\leq \big(\phi_{p'}(0 \lra \pd \Ball_n) \big)^{c(p' - p)}.
	\end{align}
\end{lemma}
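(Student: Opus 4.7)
The plan is to obtain the stated inequality by integrating a pointwise differential inequality of Graham–Grimmett type. Write $A_n = \calC_h(2n,n)$ and $\pi_n(p) = \phi_p(0 \lra \pd \Lambda_n)$ for brevity.

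The first and main step is to establish the differential inequality
\[
\frac{d}{dp}\log\!\left(\frac{\phi_p(A_n)}{1-\phi_p(A_n)}\right) \;\geq\; c(p)\,\log\!\frac{1}{\pi_n(p)},
\]
valid for all $n\geq 1$ and for $p$ bounded away from $0$ and $1$, where $c(p)>0$ depends continuously on $p$. This is essentially the content of Graham–Grimmett's theorem applied to the crossing event $A_n$. Its proof combines a Russo-type formula for the random-cluster model (which expresses $\tfrac{d}{dp}\phi_p(A_n)$ as a sum over edges of covariances of $\mathbf{1}_{A_n}$ with edge-indicators) together with the key combinatorial input that, since a horizontal crossing of the $2n\times n$ rectangle must visit each of the $2n$ vertical columns, the sum of edge influences for $A_n$ is at least a constant multiple of $\phi_p(A_n)\bigl(1-\phi_p(A_n)\bigr)\,\log(1/\pi_n(p))$. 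The argument uses only FKG, finite energy, and the translation invariance of the box, all of which hold in the slab setting, so it transfers verbatim from \cite[Cor.~5.2]{DumMan14}.

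The second step is purely analytic. Since $t\mapsto\pi_n(t)$ is nondecreasing (by stochastic monotonicity of the $\phi_p$), the function $t\mapsto -\log\pi_n(t)$ is nonincreasing on $[p,p']$. Integrating the differential inequality from $p$ to $p'$ and bounding the right-hand side by its value at $p'$ therefore gives
\[
\log\!\left(\frac{\phi_{p'}(A_n)\bigl(1-\phi_p(A_n)\bigr)}{\phi_p(A_n)\bigl(1-\phi_{p'}(A_n)\bigr)}\right) \;\geq\; c(p)\,(p'-p)\,\log\frac{1}{\pi_n(p')},
\]
where the constant $c(p)$ is the infimum of $c(t)$ over a suitable sub-interval (this is where the dependence on $p$ alone is absorbed; the regime of interest is always $p'$ bounded away from $1$). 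Exponentiating and using the trivial bounds $1-\phi_p(A_n)\leq 1$ and $\phi_{p'}(A_n)\leq 1$ on the right-hand side yields exactly \eqref{eq:ggsh2}.

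The main obstacle is the first step, namely the differential inequality itself, which is the nontrivial input. Since its proof in the random-cluster setting is written out in detail in \cite[Cor.~5.2]{DumMan14} and uses only ingredients available on $\slab$, we treat it as a black box rather than reproducing the argument, exactly as the authors do.
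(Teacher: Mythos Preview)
Your proposal is correct and follows the same approach as the paper: both defer to \cite[Cor.~5.2]{DumMan14} for the Graham--Grimmett differential inequality and then integrate, noting that the ingredients (FKG, finite energy, translation invariance) carry over to the slab setting unchanged. The paper actually gives less detail than you do --- it simply states that the proof is identical to the one in \cite{DumMan14} --- so your sketch of the two-step argument (differential inequality, then integration with the monotonicity of $\pi_n$) is a faithful and slightly more explicit rendering of the intended proof.
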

The above lemma, along with Proposition~\ref{prop:RSW}, imply that, 
for $p \in (\tilde p_c,p_c)$ (if such a $p$ exists), $ \lim_n \phi_{p}(\calC_h(2n,n)) = 1$. 
The following proposition tells us that, for such a value of $p$, 
$\phi_{p}(\calC_h(2n,n))$ actually converges to $1$ faster than any polynomial.

\begin{proposition}\label{prop:convergesfast}
	Fix $p < p'$ and $\De >0$. Suppose that $lim_{n \rightarrow\infty} \phi_{p}(\calC_h(2n,n)) = 1$. 
	Then, for $n$ sufficiently large, 
	$$\phi_{p'}(\calC_h(2n,n)) \geq 1-n^{-\De}.$$
\end{proposition}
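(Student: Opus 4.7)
The plan is to apply the integrated differential inequality~\eqref{eq:hamming_dec_inf} along a finite sequence of intermediate parameters $p=p_0<p_1<\cdots<p_M=p'$ and accumulate the exponents. Applied to the decreasing event $A:=\calC_h(2n,n)^c$, \eqref{eq:hamming_dec_inf} yields, at each step,
\begin{equation*}
\phi_{p_{i+1}}(A)\le\phi_{p_i}(A)\exp\bigl(-4(p_{i+1}-p_i)\,\phi_{p_i}(H_A)\bigr).
\end{equation*}
Telescoping gives
\begin{equation*}
\phi_{p'}(A)\le\phi_p(A)\exp\Bigl(-4\sum_{i=0}^{M-1}(p_{i+1}-p_i)\,\phi_{p_i}(H_A)\Bigr),
\end{equation*}
so, using $\phi_p(A)\le 1$, the desired estimate $\phi_{p'}(A)\le n^{-\De}$ reduces to arranging that $\sum_{i=0}^{M-1}(p_{i+1}-p_i)\,\phi_{p_i}(H_A)\ge \tfrac{\De}{4}\,\log n$ for $n$ sufficiently large.

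To produce lower bounds on the $\phi_{p_i}(H_A)$ I would use the combinatorial meaning of $H_A$: by the max-flow/min-cut theorem, $H_A(\omega)$ equals the maximum number of edge-disjoint open horizontal crossings of $[0,2n]\times[0,n]$ in $\omega$. So it suffices to show that $\phi_{p_i}$-typically the rectangle contains a number of edge-disjoint horizontal crossings that grows with~$i$.

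The production of edge-disjoint crossings is where the quantitative gluing lemma~\ref{lem:gluing}\ref{lem:gluing2} enters. Inductively, suppose that under $\phi_{p_i}$ the rectangle contains $i$ edge-disjoint horizontal crossings with probability close to~$1$. Applying Lemma~\ref{lem:gluing}\ref{lem:gluing2} with $\calA$ this ``many disjoint crossings'' event and $\calB$ a crossing of an auxiliary thin sub-rectangle placed so that its $\Lat$-projection must intersect a transverse line (to verify the topological condition~\eqref{eq:topo_cond}) gives, under $\phi_{p_i}$, the existence of $i+1$ edge-disjoint crossings up to an error $(1-\phi_{p_i}(\calA))^\beta$. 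The Hamming differential inequality between $p_i$ and $p_{i+1}$ then returns this to a ``close to~$1$'' probability at $p_{i+1}$, so the induction can continue. After of order $\log n$ steps, the accumulated sum in the telescoped exponent reaches $\tfrac{\De}{4}\log n$.

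The main technical obstacle is geometric: arranging the gluing data $(A_0,A_1,A_2,B_1,B_2)$ at each inductive step so that both the topological condition~\eqref{eq:topo_cond} holds and the crossing produced by~$\calX$ is genuinely \emph{edge}-disjoint from the previously constructed ones -- not merely separated in some looser sense. In the slab this requires exploiting both the depth of $S$ and translates of the auxiliary sub-rectangle along the vertical direction, while verifying the overlap condition at every stage. Once this bookkeeping is in place, the quantitative bound $(1-\phi_{p_i}(\calA))^\beta$ ensures that the error does not compound too badly over the $\Theta(\log n)$ steps of the induction, and the accumulated exponent in the telescoped differential inequality yields the polynomial rate $n^{-\De}$.
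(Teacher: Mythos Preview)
Your framework is sound: the integrated differential inequality~\eqref{eq:hamming_dec_inf} applied to $A=\calC_h(2n,n)^c$, together with the identification of $H_A$ with the maximal number of edge-disjoint horizontal crossings, is exactly the right starting point, and it is what the paper uses.

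The gap is in your mechanism for producing many edge-disjoint crossings. You propose to feed the event ``there exist $i$ edge-disjoint crossings'' as $\calA$ into Lemma~\ref{lem:gluing}\ref{lem:gluing2} and obtain $i+1$ disjoint crossings as output. But the gluing lemma is not of this shape: its $\calA$ must be a single-cluster event (a cluster touching $A_0$ and crossing $D$ from $A_1$ to $A_2$), and its output $\calX$ asserts that a crossing from $B_1$ to $B_2$ is \emph{connected} to $A_0$. The lemma is a tool for \emph{joining} overlapping paths, not for manufacturing edge-disjoint ones. There is no way to read ``$i+1$ edge-disjoint crossings'' out of $\calX$. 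You flag this as a ``technical obstacle'', but it is in fact the heart of the matter, and the proposal gives no indication of how to overcome it.

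The paper's route is different and avoids the issue entirely by a multi-scale renormalisation. One fixes sequences $n_{k+1}=4^k n_k$ and $p_{k+1}=p_k+\eps 2^{-(k-1)}$, and proves inductively that $1-\phi_{p_k}(\calC_h(2n_k,n_k))\le e^{-\eps 2^k}$. The inductive step works as follows: the rectangle $[0,2n_{k+1}]\times[0,n_{k+1}]$ is sliced into $n_{k+1}/n_k=4^k$ \emph{disjoint horizontal strips} of height $n_k$. Crossings of distinct strips are automatically edge-disjoint, so
\[
\phi_{p_k}\bigl(H_{\calC_h(2n_{k+1},n_{k+1})^c}\bigr)\ \ge\ 4^k\,\phi_{p_k}\bigl(\calC_h(2n_{k+1},n_k)\bigr).
\]
The gluing lemma is then used only to bound the probability of the long-thin crossing $\calC_h(2n_{k+1},n_k)$ from below by concatenating $2\cdot 4^k$ copies of $\calC_h(2n_k,n_k)$ (this is Lemma~\ref{lem:hamnocross}). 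Plugging into~\eqref{eq:hamming_dec_inf} with $p_{k+1}-p_k=\eps 2^{-(k-1)}$ gives the super-polynomial decay along the subsequence $(n_k)$; a separate short argument handles intermediate~$n$. In short: disjointness comes from geometry (disjoint strips at a smaller scale), and the gluing lemma is used to make crossings \emph{longer}, not more numerous.
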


The proof of Proposition~\ref{prop:convergesfast} is based on the following lemma.
\begin{lemma}\label{lem:hamnocross}
	There exists $\beta > 0$ 
	such that, for any $p<p'$ and $N>n$, 
	\begin{align*}
		\frac{1- \phi_{p'}(\calC_h(2N,N))}{ 1-\phi_{p}(\calC_h(2N,N))} \leq
		\exp
		\left(
			-2(p'-p)\frac{N}{n}
			\left[
				\phi_{p}(\calC_h(2n,n))^{2N/n}
				-2 \left\lceil\frac{N}{n}\right\rceil \Big(1-\phi_{p}(\calC_h(2n,n)) \Big)^\beta 
			\right]
		\right).
	\end{align*}
\end{lemma}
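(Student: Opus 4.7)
The plan is to derive the lemma from the integrated differential inequality~\eqref{eq:hamming_dec_inf}, applied to the decreasing event $\calC_h(2N,N)^c$. That inequality gives
\begin{equation*}
\frac{1-\phi_{p'}(\calC_h(2N,N))}{1-\phi_p(\calC_h(2N,N))} \;\leq\; \exp\bigl(-4(p'-p)\,\phi_p(H_{\calC_h(2N,N)^c})\bigr),
\end{equation*}
so the lemma reduces to establishing the lower bound
\begin{equation*}
\phi_p\bigl(H_{\calC_h(2N,N)^c}\bigr) \;\geq\; \tfrac{N}{2n}\Bigl[q^{2N/n} - 2\lceil N/n\rceil (1-q)^\beta\Bigr],
\end{equation*}
where $q := \phi_p(\calC_h(2n,n))$.

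Because $H_{\calC_h(2N,N)^c}(\om)$ dominates the maximum number of pairwise edge-disjoint horizontal open crossings of $\ovr{[0,2N]\times[0,N]}$ in $\om$, I would pack inside this rectangle about $N/(2n)$ horizontal substrips $R_k = \ovr{[0,2N]\times[(2k-2)n,(2k-1)n]}$ of height $n$, separated by vertical gaps of height $n$ so that crossings of distinct $R_k$ automatically share no edges. Translation invariance then gives
\begin{equation*}
\phi_p\bigl(H_{\calC_h(2N,N)^c}\bigr) \;\geq\; \tfrac{N}{2n}\cdot \phi_p\bigl(\calC_h(2N,n)\bigr).
\end{equation*}

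The heart of the argument is to lower bound $\phi_p(\calC_h(2N,n))$ by $q^{2N/n} - 2\lceil N/n\rceil(1-q)^\beta$. Cover the long strip $\ovr{[0,2N]\times[0,n]}$ by the $\approx 2N/n$ overlapping $2n\times n$ boxes $B_j := [jn,(j+2)n]\times[0,n]$, consecutive boxes meeting in an $n\times n$ square. By FKG applied to the increasing events $E_j := \{B_j \text{ is crossed horizontally}\}$,
\begin{equation*}
\phi_p\Bigl(\,\bigcap_j E_j\Bigr) \;\geq\; q^{2N/n}.
\end{equation*}
In a planar model this would already suffice, but on the slab the short crossings may overlap without intersecting; the deficit must be repaired with the quantitative gluing lemma~\ref{lem:gluing}\ref{lem:gluing2}. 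I would apply it at each of the $\approx 2N/n$ overlaps: locally, $D$ is chosen inside the corresponding $B_j$, with $A_1, A_2$ the bottom and top of the $n\times n$ overlap square and $B_1, B_2$ the left and right sides of $B_j$. The topological intersection condition~\eqref{eq:topo_cond} then follows from planarity of $\Lat$, since any vertical crossing of the overlap and any horizontal crossing of $B_j$ must meet in the planar projection. Each application produces an additive loss $(1-\phi_p(\calA))^\beta$, and by anchoring the reference event $\calA$ in every gluing to the common high-probability FKG event $\bigcap_j E_j$, every such loss is bounded uniformly by $(1-q)^\beta$. Summing over the $\approx 2N/n$ overlaps yields the claimed
\begin{equation*}
\phi_p\bigl(\calC_h(2N,n)\bigr) \;\geq\; q^{2N/n} - 2\lceil N/n\rceil(1-q)^\beta,
\end{equation*}
and combining this with the bound on the expected Hamming distance and the differential inequality proves the lemma.

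The main obstacle is ensuring that the chain of gluings in the last step accumulates errors only additively. A naive sequential application of Lemma~\ref{lem:gluing}\ref{lem:gluing2}, in which $\calA$ at each stage is the long crossing built up so far, would see $1-\phi_p(\calA)$ grow along the iteration and the errors would compound rather than sum. The resolution is to use the FKG lower bound on $\bigcap_j E_j$ as a common anchor for \emph{every} gluing, so that each error term is controlled by $(1-q)^\beta$ and the total loss is linear in $N/n$. Organising this bookkeeping — in particular, choosing $A_0$ and the $\calA$-events at each stage so that they all share the same reference probability, while still producing the desired extension of the crossing — is the main technical challenge of the proof.
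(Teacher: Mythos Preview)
Your overall architecture matches the paper exactly: apply the differential inequality~\eqref{eq:hamming_dec_inf} to $\calC_h(2N,N)^c$, bound the expected Hamming distance by $\lfloor N/n\rfloor\cdot\phi_p(\calC_h(2N,n))$ via disjoint horizontal substrips (the paper uses adjacent substrips rather than ones separated by gaps, but either works), and then lower-bound $\phi_p(\calC_h(2N,n))$ by iterating the quantitative gluing lemma.

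The gap is in the last step, and it is precisely the one you flag as ``the main technical challenge''. In your setup you take $A_1,A_2$ to be the top and bottom of the overlap square and $B_1,B_2$ the left and right sides of $B_j$; this makes $\calB$ the local crossing $E_j$ and forces $\calA$ to be the growing object (the vertical crossing of the overlap connected through $A_0$ to whatever has already been built). Since the error term in Lemma~\ref{lem:gluing}\ref{lem:gluing2} is $(1-\phi_p(\calA))^\beta$, this is exactly the compounding situation you say you want to avoid. Your proposed remedy, ``anchoring $\calA$ to the FKG event $\bigcap_j E_j$'', cannot rescue this: that intersection has probability at least $q^{2N/n}$, not at least $q$, so the error per step would at best be $(1-q^{2N/n})^\beta$, which is useless once $N/n$ is large.

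The paper's resolution is simpler and purely structural: swap the roles. At each extension one takes $\calB$ to be the long crossing built so far and $\calA$ to be a \emph{local} event --- essentially the event $H$ from Claim~\ref{claim:combinecross}, a vertical crossing of the overlap square connected to the next short side, itself obtained from one application of the gluing lemma with $\calA$ a single $2n\times n$ crossing. Because $\calA$ is local at every stage, $1-\phi_p(\calA)$ is bounded by a fixed power of $1-q$, and the additive errors over the $2\lceil N/n\rceil$ applications sum to $2\lceil N/n\rceil(1-q)^\beta$. No FKG anchoring is needed; the leading term $q^{2N/n}$ emerges automatically from the multiplicative factor $\phi_p(\calA)\phi_p(\calB)$ in the iteration.
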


\begin{proof}[Lemma~\ref{lem:hamnocross}]
	We prove this lemma by bounding the expected (under $\phi_{p}$) Hamming distance 
	to the decreasing event $\calC_h(2N,N)^c$ and applying~\eqref{eq:hamming_dec_inf}.
	
	The Hamming distance to $\calC_h(2N,N)^c$ is clearly larger or equal to 
	the number of edge-disjoint horizontal crossings of $\ovr{[0,2N]\times[0,N]}$. Thus 
	\begin{align*}
		\phi_{p}(H_{\calC_h(2N,N)^c}) 
		&= \phi_{p}\big(\text{number of disjoint horizontal crossing of } \ovr{[0,2N]\times[0,N]}\big) \\
		&\geq \left\lfloor\frac{N}{n}\right\rfloor \phi_{p}(\calC_h(2N,n)).
	\end{align*}
	The second inequality is due to the fact that the horizontal crossings of rectangles 
	$\ovr{[0,2N]}$ $\ovr{\times[in,(i+1)n]}$ for $0\leq i < \lfloor N/n \rfloor$ 
	are disjoint and to the invariance of the measure under translation.
	
	Let us now bound $ \phi_{p}(\calC_h(2N,n))$ from below. 
	By the same induction as in Claim \ref{claim:combinecross},
	using the quantitative gluing lemma \ref{lem:gluing}\ref{lem:gluing2} 
	$2\lceil\frac{N}{n}\rceil$ times, we obtain 
	\begin{align}\label{duexNn}
		\phi_{p}\big(\calC_h(2N,n)\big) 
		\geq \phi_{p}\big(\calC_h(2n,n) \big)^{2N/n} - 2\left\lceil\frac{N}{n}\right\rceil \big(1-\phi_{p}(\calC_h(2n,n))\big)^\beta,
	\end{align}
	Where $\beta > 0$ is given by Lemma \ref{lem:gluing}\ref{lem:gluing2}.
	Using~\eqref{eq:hamming_dec_inf} and the fact that $ \lceil \frac{N}{n}\rceil \leq \frac{N}{2n}$, the lemma follows.
\end{proof}

\begin{proof}[Proposition~\ref{prop:convergesfast}]
	Fix $p < p'$ and $\De>0$ as in the proposition. Fix $\eps >0 $ such that  $p+\epsilon<p'$.
	We first introduce two increasing sequences $(n_k)_{k\geq k_0} \in \bbN$ and $(p_k)_{k\geq k_0} \in [p,p']$
	such that
	$$\phi_{p_k}(\calC_h(2n_k,n_k))>1-e^{-\epsilon2^{k}}, $$
	(The indices start from $k_0$ only for a mater of a more clear notation.) 
	
	For $k \geq 1$, set $v(k)=(1-e^{-\epsilon2^{k}})^{2\cdot4^k}-2\cdot4^k e^{-\beta  \epsilon 2^k}$. 
	The sequence $v(k)$ tends to $1$ as $k$ tends to infinity, 
	so we may fix an index $k_0$ such that 
	$v(k)>1/2$ for all $k\geq k_0$. 
	Set $p_{k_0}=p$ and choose $n_{k_0} \in \bbN$ such that 
	$\phi_p(\calC_h(2n,n)) > 1-e^{-\epsilon2^{k_0}}$ for all $n\geq n_{k_0}$
	(the choice of $n_{k_0}$ is possible by hypothesis). 
	Now define, for $k \geq k_0$, 
	\begin{align*}
		n_{k+1}&=n_{k} 4^{k},\\
		p_{k+1}&=p_{k}+\frac{\epsilon}{2^{k-1}}.
	\end{align*}
	We will now prove by induction that $\phi_{p_k}(\calC_h(2n_k,n_k))\geq1-e^{-\epsilon2^{k}}$ for all $k \geq k_0$. 
	The statement is true for $k_0$ by choice of $n_0$. 
	Suppose it is true for some $k \geq k_0$. Then, based on the Lemma~\ref{lem:hamnocross},
	\begin{align*}
		&1 -\phi_{p_{k+1}}(\calC_h(2n_{k+1},n_{k+1})) \\
		&\quad  \leq\exp
		\left(
			-2(p_{k+1}-p_k)\frac{n_{k+1}}{n_k}
			\left[
				\phi_{p_k}(\calC_h(2n_k,n_k))^{2\frac{n_{k+1}}{n_k}}
				-2 \left\lceil\frac{n_{k+1}}{n_k}\right\rceil \Big(1-\phi_{p_k}(\calC_h(2n_k,n_k)) \Big)^\beta 
			\right]
		\right)\\
		&\quad \leq \exp\Big[-2\frac{\epsilon}{2^{k-1}}\: 4^{k}\: v(k)\Big] 
		\leq e^{-\epsilon 2^{k+1}},
	\end{align*}
	and the induction is complete. 
	
	By monotonicity of $p \mapsto \phi_p$, 
	we deduce that $1-\phi_{p'}(\calC_h(2n_{k},n_{k})) \leq e^{-\epsilon 2^{k}}$ for all $k \geq k_0$. 
	Since $n_k = n_{k_0} 4^{k_0 + \dots + (k-1)} \leq n_{k_0} 4^{k^2}$, it follows that for all $k \geq k_0$ sufficiently large 
	$ n_k^{\De} \leq e^{\epsilon2^{k}}$, hence 
	$1-\phi_{p'}(\calC_h(2n_{k},n_{k})) \leq n_k^{-\De}$,  which is the desired statement for $n = n_k$.  
	
	It remains to prove the statement for values of $n$ in between the scales $(n_k)_{k\geq k_0}$.
	Fix $n$ such that $n_k<n<n_{k+1}$ for some $k \geq k_0$. Based on~\eqref{duexNn}, we have
	\begin{align*}
		\phi_{p'}(\calC_h(2n,n))\geq \phi_{p'}(\calC_h(2n_{k+1},n_{k})) \geq v(k).
	\end{align*}
	In order to obtain the desired result,
	it suffices to show that $v(k)>1-n_{k+1}^{-\De}>1-n^{-\De}$ for $k$ sufficiently large. 
	Recall that $n_k \leq n_{k_0} 4^{k^2}$.
	As a consequence $2\cdot 4^{k}e^{- \beta \epsilon 2^{k}} \leq \frac{1}{2}(n_{k_0}\: 4^{k^2})^{-\De}$ for sufficiently large $k$. 
	Moreover, we have
	$$ 
		\big(1-e^{-\epsilon 2^{k}}\big)^{2\cdot4^{k}}
		\geq \exp\Big(-\frac{2\cdot 4^{k}}{e^{\epsilon2^{k}}}\Big) 
		\geq \exp\Big(-\frac{1}{2}\big(n_{k_0}\: 4^{k^2}\big)^{-\De}\Big)
		\geq 1-\frac{1}{2}\big(n_{k_0}\: 4^{k^2}\big)^{-\De}
	$$
	for sufficiently large $k$. 
	The first and last inequality are due to the fact that, for sufficiently small $x$, we have $e^{-x}\geq 1-x \geq e^{-2x}$;
	the second inequality comes from direct asymptotic estimates.  
	Hence $v(k)> 1-(n_{k_0}\: 4^{k^2})^{-\De}>1-n_{k+1}^{-\De}>1-n^{-\De}$ for $k$ large enough. 
\end{proof}

We are finally ready to put the different elements together to prove our main result. 

\begin{proof}[Theorem~\ref{thm:main} ]
	Recall the definitions of $p_c$ and $\tilde p_c$ and that we are aiming to prove $p_c \leq \tilde p_c$. 
	We proceed by contradiction and assume $p_c > \tilde p_c$.
	Then there exist parameters $\tilde p_c < p_0 < p_1 < p_2 < p_c$.
	Corollary~\ref{cor:hardcrossbound} implies that $\{\phi_{p_0}(\calC_h(2n,n)):\, n\geq 1\}$ is bounded away from $0$. 
	Since $p_1 < p_c$, $\phi_{p_1}(0 \lra \pd \Ball_n) \to  0$ as $n \to \infty$, 
	and Lemma~\ref{cor:gg_applied} yields
	$$ \phi_{p_1}\big(\calC_h(2n,n) \big) \xrightarrow[n \to \infty]{} 1.$$
	Proposition~\ref{prop:convergesfast} with $\De = 1$ implies that there exists $n_0$ such that for all $n>n_0$ we have
	\begin{align}
		\phi_{p_2}\big(\calC_h(2n,n) \big) \geq 1- \frac{1}{n}. \label{eq:1overn}
	\end{align}
	Recall the exponent $\beta$ appearing in the second part of the gluing lemma
	($\beta$ may be taken small with no loss of generality; will assume $\beta < 1$ for computational purposes).
	Now choose $n_1>n_0$ such that
	$$ 2^{-n_1} + 4\sum_{k \geq n_1} 2^{-\beta^2 k } <1.$$
	
	For $n \geq n_1$, let $H_n$ be the event that $\ovr{[0,2^{n_1}] \times \{0\}}$ is connected to $\ovr{[0,2^{n}] \times \{2^n\}}$
	inside the domain $\ovr{[0,2^n]^2}$. That is 
	$$ 
		H_n = \ovr{[0,2^{n_1}] \times \{0\}} \xlra{\ovr{[0,2^n]^2}}  \ovr{[0,2^{n}] \times \{2^n\}}.
	$$
	
	\begin{figure}
		\begin{center}
			\includegraphics[width=0.6\textwidth]{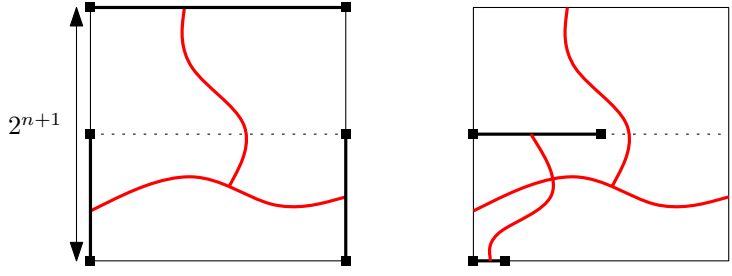}
		\end{center}
		\caption{\emph{Left:} the event $\calA$.
		\emph{Right:} the gluing of $\calA$ et $H_n$ to obtain $H_{n+1}$.}
		\label{fig:finalcombin}
	\end{figure}

%
	Let us estimate the difference between $\phi_{p_2}(H_{n+1})$ and $\phi_{p_2}(H_n)$ for some $n\geq n_1$. 
	Fix such a value $n$ and let $\calA$ be the event that there exists an open crossing in 
	$\ovr{[0,2^{n+1}] \times [0,2^{n}]}$ between $\ovr{\{0\} \times [0,2^{n}]}$ and $\ovr{\{2^{n+1}\} \times [0,2^{n}]}$
	that is connected in $\ovr{[0,2^{n+1}]^2}$ to $\ovr{[0,2^{n+1}]\times \{2^{n+1}\}}$.
	(See the left of Figure \ref{fig:finalcombin} for an illustration.)
	The second part of the gluing lemma and the estimate \eqref{eq:1overn} imply that 
	$$
		\phi_{p_2}(\calA) 
		\geq (1- 2^{-(n+1)})^2 - 2^{-\beta (n+1)}
		\geq 1 - 2^{-n} - 2^{-\beta (n+1)}
		\geq 1 - 2 \cdot 2^{-\beta n}.
	$$ 
	We may now apply the gluing lemma using the events $\calA$ and $H_n$. 
	That is, apply it with 
	$\calD = \ovr{[0,2^{n+1}] \times [0,2^{n}]}$,
	$\calD' = \ovr{[0,2^{n+1}]^2}$,
	$A_0 = \ovr{[0,2^{n+1}] \times \{2^{n+1}\}}$, 
	$A_1 = \ovr{\{0\} \times [0,2^{n}]}$,
	$A_2 = \ovr{\{2^{n+1}\} \times [0,2^{n}]}$,
	$B_1 = \ovr{[0,2^{n_1}] \times \{0\}}$
	and $B_2 =  \ovr{[0,2^{n}] \times \{2^n\}}$.
	Then we obtain 
	\begin{align*}
		\phi_{p_2}\big(H_{n+1}\big) 
		&= \phi_{p_2}\big(B_1 \xlra{\calD'} A_0\big) 
		\geq \phi_{p_2}(H_{n})\phi_{p_2}(\calA)  - \big(1-\phi_{p_2}(\calA)\big)^\beta\\
		& \geq \phi_{p_2}(H_{n}) - 2 \cdot \big(1-\phi_{p_2}(\calA)\big)^\beta \\
		& \geq \phi_{p_2}(H_{n}) - 4 \cdot 2^{-\beta^2 n}.
	\end{align*}
	Finally, as a consequence of \eqref{eq:1overn}, $\phi_{p_2}(H_{n_1}) > 1 - 2^{-n_1}$. 
	We may therefore deduce that, for all $n \geq n_1$,
	$$
		\phi_{p_2}(H_{n}) 
		\geq \phi_{p_2}(H_{n_1}) -  4 \sum_{n_1\leq k < n} 2^{-\beta^2 k}
		\geq 1 - 2^{-n_1} -  4 \sum_{ k \geq n_1} 2^{-\beta^2 k} > 0, 
	$$
	due to our choice of $n_1$.
	Observe now that this implies 
	\begin{align}
		\phi_{p_2}\big(\ovr{[0,2^{n_1}] \times \{0\}} \lra \infty\big) 
		= \lim_{n\to \infty} \phi_{p_2} \big(\ovr{[0,2^{n_1}] \times \{0\}} \lra \pd \La_{2^n}\big)
		\geq   \lim_{n \to \infty} \phi_{p_2} (H_n) >0. 
	\end{align}
	By finite energy property and the FKG inequality, we deduce that
	the origin belongs to an infinite open cluster with positive $\phi_{p_2}$-probability.
	This contradicts the choice of $p_2$, and the theorem is proved.
\end{proof}

\section*{Acknowledgements}

The authors would like to thank Hugo Duminil-Copin for numerous inspiring discussions about this work. 
This research was supported by the NCCR SwissMAP, the ERC AG COMPASP, and the Swiss NSF.

\bibliographystyle{plain} 
\bibliography{biblicomplete}
\bigskip 

\noindent
\begin{minipage}{0.46\textwidth}
	\footnotesize\obeylines
	\textsc{University of Fribourg}
	\textsc{Fribourg, Switzerland}
	\textsc{E-mail:} \texttt{ioan.manolescu@unifr.ch}\smallskip
\end{minipage}\hfill
\begin{minipage}{0.46\textwidth}
	\footnotesize\obeylines
	\begin{flushright}
		\textsc{University of Geneva}
		\textsc{Geneva, Switzerland}
		\textsc{E-mail:} \texttt{aran.raoufi@unige.ch}
	\end{flushright}
\end{minipage}

\end{document}